\newtheorem{theorem}{Theorem}[section]
\newtheorem{lemma}[theorem]{Lemma}
\theoremstyle{definition}
\newtheorem{definition}[theorem]{Definition}
\newtheorem{example}[theorem]{Example}
\newtheorem{problem}[theorem]{Problem}
\theoremstyle{remark}
\newtheorem{remark}[theorem]{Remark}
\numberwithin{equation}{section}
\begin{document}
\title{Summation identities and special values of hypergeometric series in the $p$-adic setting}


\author{Rupam Barman}
\address{Department of Mathematics, Indian Institute of Technology, Hauz Khas, New Delhi-110016, INDIA}
\curraddr{}
\email{rupam@maths.iitd.ac.in}
\author{Neelam Saikia}
\address{Department of Mathematics, Indian Institute of Technology, Hauz Khas, New Delhi-110016, INDIA}
\curraddr{}
\email{nlmsaikia1@gmail.com}
\thanks{}
\author{Dermot M\lowercase{c}Carthy}
\address{Department of Mathematics \& Statistics, Texas Tech University, Lubbock, TX 79409-1042, USA}
\curraddr{}
\email{dermot.mccarthy@ttu.edu}
\thanks{}

\subjclass[2010]{Primary: 11G20, 33E50; Secondary: 33C99, 11S80,
11T24.}
\date{20th August, 2014}
\keywords{Character of finite fields, Gaussian hypergeometric series, Hyperelliptic curves, Teichm\"{u}ller character,
$p$-adic Gamma function.}
\thanks{We are grateful to Ken Ono for his comments on an initial draft of the article.}
\begin{abstract} We prove hypergeometric type summation identities for a function defined in terms of quotients of the $p$-adic gamma function
by counting points on certain families of hyperelliptic curves over $\mathbb{F}_{q}$.
We also find certain special values of that function.\end{abstract}
\maketitle
\section{Introduction and statement of results}
In \cite{greene}, Greene introduced the notion of hypergeometric functions over finite fields
analogous to classical hypergeometric series. Since then many interesting relations between
special values of Greene's hypergeometric functions and the number of
points on certain varieties over finite fields have been obtained.
Greene considered multiplicative characters of finite fields as arguments
in his definition of hypergeometric functions over finite fields.
Consequently, results involving hypergeometric functions over
finite fields are often restricted to primes in certain congruence classes. For example, the expressions for the trace
of Frobenius map on families of elliptic curves given in \cite{BK1, BK2, Fuselier, lennon, lennon2} are restricted to
certain congruence classes to facilitate the existence of characters of specific orders.
To overcome these restrictions, in \cite{mccarthy2, mccarthy3}, the third author defined a function
${_{n}}G_{n}[\cdots]$ in terms of quotients of the $p$-adic gamma function
which can best be described as an analogue of hypergeometric series in the $p$-adic setting.
He showed how results involving hypergeometric functions over finite fields can be extended to almost all primes using the function
${_{n}}G_{n}[\cdots]$.
\par
Let $p$ be an odd prime, and let $\mathbb{F}_q$ denote the finite field with $q$ elements.
Let $\phi$ be the quadratic character on $\mathbb{F}_q^{\times}$ extended to all of $\mathbb{F}_q$ by setting $\phi(0):=0$.
Let $\Gamma_p(\cdot)$ denote the Morita's $p$-adic gamma function, and let $\omega$ denote the
Teichm\"{u}ller character of $\mathbb{F}_q$ with $\overline{\omega}$ denoting its character inverse.
For $x \in \mathbb{Q}$ we let $\lfloor x\rfloor$ denote the greatest integer less than
or equal to $x$ and $\langle x\rangle$ denote the fractional part of $x$, i.e., $x-\lfloor x\rfloor$.
The definition of the function ${_{n}}G_{n}[\cdots]$ is
as follows.
\begin{definition}\cite[Definition 5.1]{mccarthy2} \label{defin1}
Let $q=p^r$, for $p$ an odd prime and $r \in \mathbb{Z}^+$, and let $t \in \mathbb{F}_q$.
For $n \in \mathbb{Z}^+$ and $1\leq i\leq n$, let $a_i$, $b_i$ $\in \mathbb{Q}\cap \mathbb{Z}_p$.
Then the function ${_{n}}G_{n}[\cdots]$ is defined by
\begin{align}
&_nG_n\left[\begin{array}{cccc}
             a_1, & a_2, & \ldots, & a_n \\
             b_1, & b_2, & \ldots, & b_n
           \end{array}|t
 \right]_q:=\frac{-1}{q-1}\sum_{j=0}^{q-2}(-1)^{jn}~~\overline{\omega}^j(t)\notag\\
&\times \prod_{i=1}^n\prod_{k=0}^{r-1}(-p)^{-\lfloor \langle a_ip^k \rangle-\frac{jp^k}{q-1} \rfloor -\lfloor\langle -b_ip^k \rangle +\frac{jp^k}{q-1}\rfloor}
 \frac{\Gamma_p(\langle (a_i-\frac{j}{q-1})p^k\rangle)}{\Gamma_p(\langle a_ip^k \rangle)}
 \frac{\Gamma_p(\langle (-b_i+\frac{j}{q-1})p^k \rangle)}{\Gamma_p(\langle -b_ip^k \rangle)}.\notag
\end{align}
\end{definition}
We note that the value of ${_{n}}G_{n}[\cdots]$ depends only on the fractional part of the parameters $a_i$ and $b_i$, and is invariant if we change the
order of the parameters.
\par
The aim of this paper is to explore possible summation identities for the function $_{n}G_{n}[\cdots]$.
In \cite{mccarthy2}, the third author showed that transformations for hypergeometric functions over finite fields can be
re-written in terms of ${_{n}}G_{n}[\cdots]$. However, such transformations will only hold for
all $p$ where the original characters existed over $\mathbb{F}_q$, and hence restricted to primes in certain
congruence classes. It is a non-trivial exercise to then extend these results to almost all primes. While numerous transformations exist for the finite field
hypergeometric functions, very few exist for ${_{n}}G_{n}[\cdots]$ in full generality. The first and second authors \cite{BS1, BS3} provide
transformations for ${_{2}}G_{2}[\cdots]_q$ by counting points on various families of elliptic curves over $\mathbb{F}_q$.
Recently, the third author and Fuselier \cite{Fuselier-Dermot} provide two more transformations for $_{n}G_{n}[\cdots]_p$ when $n=3$ and $n=4$, respectively.
They also provide two transformations for $_{n}G_{n}[\cdots]_p$ for any $n$. However, these transformations are over $\mathbb{F}_p$.
In this paper we prove eight summation identities for the function $_{n}G_n[\cdots]_q$ over $\mathbb{F}_q$ for any $n$, which are listed below.
\begin{theorem}\label{thm-1}
Let $d\geq4$ be even, and let $p$ be an odd prime such that $p\nmid d(d-1)$. Let $a,b\in\mathbb{F}_{q}^{\times}$. For $y\in\mathbb{F}_q$,
let $f(y)=\frac{d}{a}\left(\frac{(b-y^2)d}{a(d-1)}\right)^{d-1}$ and $g(y)=\frac{d(b-y^2)}{a}\left(\frac{d}{a(d-1)}\right)^{d-1}$.
Let $l=\gcd(d-1, q-1)$, and let $\chi$ be a multiplicative character of order $l$. If $b$ is not a square in $\mathbb{F}_q$ then
\begin{align}
&\sum_{y\in\mathbb{F}_q}\phi(y^2-b)\notag\\
&\times{_{d-1}}G_{d-1}\left[\begin{array}{ccccccc}
                       \frac{1}{2(d-1)}, & \frac{3}{2(d-1)}, & \ldots, & \frac{d-1}{2(d-1)}, & \frac{d+1}{2(d-1)}, & \ldots, & \frac{2d-3}{2(d-1)}\\
                       0, & \frac{1}{d}, & \ldots, & \frac{d-2}{2d}, & \frac{d+2}{2d}, & \ldots, & \frac{d-1}{d}
                       \end{array}|f(y)
\right]_q\notag\\
&=-1-q\cdot{_{d-2}}G_{d-2}\left[\begin{array}{ccccccc}
                                \frac{1}{d-1}, & \frac{2}{d-1}, & \ldots, & \frac{d-2}{2(d-1)}, & \frac{d}{2(d-1)},
                                & \ldots, & \frac{d-2}{d-1} \\
                                \frac{1}{d}, & \frac{2}{d}, & \ldots, & \frac{d-2}{2d}, & \frac{d+2}{2d}, & \ldots,
                                 & \frac{d-1}{d}
                              \end{array}|f(0)
\right]_q\notag
\end{align}
and
\begin{align}
&\sum_{y\in\mathbb{F}_q}\phi(y^2-b) \notag\\
&\times{_{d-1}}G_{d-1}\left[\begin{array}{ccccccc}
                       \frac{1}{2(d-1)}, & \frac{3}{2(d-1)}, & \ldots, & \frac{d-1}{2(d-1)}, & \frac{d+1}{2(d-1)}, & \ldots,  & \frac{2d-3}{2(d-1)}\\
                       0, & \frac{1}{d}, & \ldots, & \frac{d-2}{2d}, & \frac{d+2}{2d}, & \ldots,  & \frac{d-1}{d} \end{array}|g(y)
\right]_q\notag\\
&=-1+q\cdot
{_{d-2}}G_{d-2}\left[\begin{array}{ccccccc}
                         \frac{1}{(d-1)}, & \frac{2}{(d-1)}, & \ldots, & \frac{d-2}{2(d-1)}, & \frac{d}{2(d-1)},
                         & \ldots, & \frac{d-2}{d-1} \\
                         \frac{1}{d}, & \frac{2}{d}, & \ldots, & \frac{d-2}{2d}, & \frac{d+2}{2d}, & \ldots,
                          & \frac{d-1}{d}
                       \end{array}|g(0)
\right]_q.\notag
\end{align}
If $b$ is a square in $\mathbb{F}_q$ then
\begin{align}
&1+2\sum_{j=0}^{l-1}\chi^{j}(-a)+\sum_{\substack{y\in\mathbb{F}_q\\
y\neq\pm\sqrt{b}}}\phi(y^2-b) \notag\\
&\times{_{d-1}}G_{d-1}\left[\begin{array}{ccccccc}
                       \frac{1}{2(d-1)}, & \frac{3}{2(d-1)}, & \ldots, & \frac{d-1}{2(d-1)}, & \frac{d+1}{2(d-1)}, & \ldots, & \frac{2d-3}{2(d-1)}\\
                       0, & \frac{1}{d}, & \ldots, & \frac{d-2}{2d}, & \frac{d+2}{2d}, & \ldots, & \frac{d-1}{d} \end{array}|f(y)
\right]_q\notag\\
&=-q\cdot {_{d-2}}G_{d-2}\left[\begin{array}{ccccccc}
                                \frac{1}{d-1}, & \frac{2}{d-1}, & \ldots, & \frac{d-2}{2(d-1)}, & \frac{d}{2(d-1)},
                                & \ldots, & \frac{d-2}{d-1} \\
                                \frac{1}{d}, & \frac{2}{d}, & \ldots, & \frac{d-2}{2d}, & \frac{d+2}{2d}, & \ldots,
                                 & \frac{d-1}{d}
                              \end{array}|f(0)
\right]_q\notag
\end{align}
and
\begin{align}
&3+\sum_{\substack{y\in\mathbb{F}_q\\
y\neq\pm\sqrt{b}}}\phi(y^2-b)\notag\\
&\times{_{d-1}}G_{d-1}\left[\begin{array}{ccccccc}
                       \frac{1}{2(d-1)}, & \frac{3}{2(d-1)}, & \ldots, & \frac{d-1}{2(d-1)}, & \frac{d+1}{2(d-1)}, & \ldots,  & \frac{2d-3}{2(d-1)}\\
                       0, & \frac{1}{d}, & \ldots, & \frac{d-2}{2d}, & \frac{d+2}{2d}, & \ldots,  & \frac{d-1}{d} \end{array}|g(y)
\right]_q\notag\\
&=-q\cdot
{_{d-2}}G_{d-2}\left[\begin{array}{ccccccc}
                         \frac{1}{(d-1)}, & \frac{2}{(d-1)}, & \ldots, & \frac{d-2}{2(d-1)}, & \frac{d}{2(d-1)},
                          & \ldots, & \frac{d-2}{(d-1)} \\
                         \frac{1}{d}, & \frac{2}{d}, & \ldots, & \frac{d-2}{2d}, & \frac{d+2}{2d}, & \ldots,
                          & \frac{d-1}{d}
                       \end{array}|g(0)
\right]_q.\notag
\end{align}
\end{theorem}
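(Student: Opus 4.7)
The plan is to count $\mathbb{F}_q$-rational points on a pair of hyperelliptic curves in two different ways and match the two expressions. The shape of $f(y)$ and $g(y)$ strongly suggests the curves $C_1:w^2=x^d+ax+b$ and $C_2:w^2=bx^d+ax^{d-1}+1$ (up to signs), which are related by the involution $x\mapsto 1/x$, $w\mapsto w/x^{d/2}$, well-defined since $d$ is even. On each curve I would partition the affine points into vertical fibers (fix $w=y$) and horizontal fibers (fix $x$), compute each count in closed form, and equate.

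For the vertical fibers on $C_1$, each $y$ with $b-y^2\neq 0$ contributes the number $N(y)$ of solutions in $\mathbb{F}_q$ to the trinomial equation $x^d+ax+(b-y^2)=0$. The first key lemma is a $p$-adic expression for the root-count of a trinomial $x^d+Ax+C$, which after normalization should read as $\phi(y^2-b)\cdot {}_{d-1}G_{d-1}[\cdots | \kappa(A,C)]_q$ with $\kappa(a,b-y^2)=f(y)$. For the horizontal fibers, each $x$ contributes $1+\phi(x^d+ax+b)$; the second key lemma expresses the remaining character sum $\sum_x\phi(x^d+ax+b)$ as $\pm q\cdot{}_{d-2}G_{d-2}[\cdots | f(0)]_q$, where the reduction from $d-1$ to $d-2$ parameter pairs happens because multiplying by $\phi$ collapses the pair $\{1/(2(d-1)),\,(d-1)/(2(d-1))\}$ in the $a_i$ list with the $0$ in the $b_i$ list. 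Equating the two counts (with the $q+2$ contribution coming from $q$ horizontal fibers plus the two points at infinity, since the leading coefficient is a square) gives the first identity; the second follows from the analogous procedure applied to $C_2$.

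The case split on whether $b$ is a square is forced by the vertical fibers with $y^2=b$, where the trinomial degenerates to $x(x^{d-1}+a)=0$, having $1+\#\{x:x^{d-1}=-a\}=1+\sum_{j=0}^{l-1}\chi^j(-a)$ solutions. When $b$ is not a square these fibers do not exist and the first half of the theorem drops out directly. When $b$ is a square the two fibers $y=\pm\sqrt{b}$ each contribute $1+\sum_{j=0}^{l-1}\chi^j(-a)$ to the point count, but $\phi(0)=0$ excludes them from the summation on the left-hand side; explicitly separating these two fibers out of the point count produces exactly the extra $1+2\sum_{j=0}^{l-1}\chi^j(-a)$ (respectively $3$) correction on the left-hand sides of the last two identities.

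The main obstacle is establishing the two expression lemmas. Writing $N(y)$ or $\sum_x\phi(x^d+ax+b)$ as a multiple character sum over $\mathbb{F}_q$ is standard, but conforming the resulting $p$-adic gamma quotients exactly to Definition \ref{defin1} requires pushing the multiplicative Fourier expansion through the Gross--Koblitz formula, matching fractional parts to the specific parameter lists $\{(2i-1)/(2(d-1))\}$ and $\{i/d\}$ (with the middle slots $(d-2)/(2d)$ and $(d+2)/(2d)$ reflecting that the $1/2$ slot is absent due to the $\phi$-twist), and tracking signs from the $(-p)^{-\lfloor\cdot\rfloor}$ and $(-1)^{jn}$ factors. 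Once these two lemmas are in hand, the rest of the argument is careful bookkeeping across the special fibers and the $f$-versus-$g$ swap.
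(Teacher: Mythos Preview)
Your approach is essentially the paper's: count the affine $\mathbb{F}_q$-points on a hyperelliptic curve two ways---once by fibering over $y$ (using the cited trinomial root-count formulas, Theorems~\ref{thm-7} and~\ref{thm-9}) and once ``globally'' (the paper's Theorems~\ref{thm-3} and~\ref{thm-5}, which compute $N_d$ and $N_d'$ directly via Gauss sums, Davenport--Hasse, and Gross--Koblitz)---then equate. Your diagnosis of the case split on $b$ being a square, and of the degenerate fibers $x(x^{d-1}+a)=0$, is exactly right.

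Two small corrections. First, your second curve is off: the $g$-identities come not from $C_2:w^2=bx^d+ax^{d-1}+1$ but from the \emph{independent} family $E_d':y^2=x^d+ax^{d-1}+b$. The birational involution $x\mapsto 1/x$ is a red herring; if you run the fiber count on your $C_2$ the argument of the ${}_{d-1}G_{d-1}$ becomes $\tfrac{d(1-y^2)}{a}\bigl(\tfrac{bd}{a(d-1)}\bigr)^{d-1}$, not $g(y)$. With $E_d'$ the degenerate polynomial at $y=\pm\sqrt{b}$ is $x^{d-1}(x+a)$, contributing $2$ roots per fiber, which is where the constant $3$ (rather than $1+2\sum_j\chi^j(-a)$) comes from in the fourth identity. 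Second, the paper counts only affine points, so the bookkeeping constant is $q$, not $q+2$; there are no points at infinity in the matching.
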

\begin{theorem}\label{thm-2}
Let $d\geq3$ be odd, and let $p$ be an odd prime such that $p\nmid d(d-1)$. Let $a,b\in\mathbb{F}_{q}^{\times}$.
For $y\in\mathbb{F}_q$, let $f(y)=\frac{d}{a}\left(\frac{(b-y^2)d}{a(d-1)}\right)^{d-1}$ and $g(y)=\frac{d(b-y^2)}{a}\left(\frac{d}{a(d-1)}\right)^{d-1}$.
Let $l=\gcd(d-1,q-1)$, and let $\chi$ be a multiplicative character of order $l$.\\ If $b$ is not a square in $\mathbb{F}_q$ then
\begin{align}
&\sum_{y\in\mathbb{F}_q}
{_{d-1}}G_{d-1}\left[\begin{array}{ccccccc}
                      0, & \frac{1}{d-1}, & \ldots, & \frac{d-3}{2(d-1)}, & \frac{d-1}{2(d-1)}, &\ldots, & \frac{d-2}{d-1} \\
                      \frac{1}{2d}, & \frac{3}{2d}, & \ldots, & \frac{d-2}{2d}, & \frac{d+2}{2d}, & \ldots, & \frac{2d-1}{2d}
                    \end{array}
|-f(y)
\right]_q\notag\\
&=q\cdot{_{d-1}}G_{d-1}\left[\begin{array}{ccccccc}
                                 \frac{1}{2(d-1)}, & \frac{3}{2(d-1)}, & \ldots, & \frac{d-2}{2(d-1)}, & \frac{d}{2(d-1)},
                                  & \ldots, & \frac{2d-3}{2(d-1)} \\
                                 \frac{1}{2d}, & \frac{3}{2d}, & \ldots, & \frac{d-2}{2d}, & \frac{d+2}{2d},
                                  & \ldots, & \frac{2d-1}{2d}
                               \end{array}|-f(0)
\right]_q\notag
\end{align}
and
\begin{align}
&\dfrac{\phi(a)}{q}\cdot \sum_{y\in\mathbb{F}_q}\phi(y^2-b)\notag\\
&\times {_{d-1}}G_{d-1}\left[\begin{array}{ccccccc}
                      0, & \frac{1}{d-1}, & \ldots, & \frac{d-3}{2(d-1)}, & \frac{d-1}{2(d-1)}, &\ldots, & \frac{d-2}{d-1} \\
                      \frac{1}{2d}, & \frac{3}{2d}, & \ldots, & \frac{d-2}{2d}, & \frac{d+2}{2d}, & \ldots, & \frac{2d-1}{2d}
                    \end{array}
|-g(y)
\right]_q\notag\\
&={_{d-1}}G_{d-1}\left[\begin{array}{cccccccc}
                          \frac{1}{d-1}, & \frac{2}{d-1}, & \ldots, & \frac{d-1}{2(d-1)}, & \frac{d+1}{2(d-1)},
                           & \ldots, & \frac{d-2}{d-1}, & \frac{1}{2} \\
                          \frac{1}{d}, & \frac{2}{d}, & \ldots, & \frac{d-1}{2d}, & \frac{d+1}{2d}, & \ldots,
                           & \frac{d-2}{d}, & \frac{d-1}{d}
                        \end{array}|-g(0)\right]_q.\notag
\end{align}
If $b$ is a square in $\mathbb{F}_q$ then
\begin{align}
&\sum_{\substack{y\in\mathbb{F}_q\\y\neq\pm\sqrt{b}}}{_{d-1}}G_{d-1}\left[\begin{array}{ccccccc}
                      0, & \frac{1}{d-1}, & \ldots, & \frac{d-3}{2(d-1)}, & \frac{d-1}{2(d-1)}, &\ldots, & \frac{d-2}{d-1} \\
                      \frac{1}{2d}, & \frac{3}{2d}, & \ldots, & \frac{d-2}{2d}, & \frac{d+2}{2d}, & \ldots, & \frac{2d-1}{2d}
                    \end{array}
|-f(y)
\right]_q\notag\\
&=-2\sum_{j=0}^{l-1}(\chi^{j}\phi)(-a)-q\notag\\
&\times {_{d-1}}G_{d-1}\left[\begin{array}{ccccccc}
                                 \frac{1}{2(d-1)}, & \frac{3}{2(d-1)}, & \ldots, & \frac{d-2}{2(d-1)}, & \frac{d}{2(d-1)},
                                  & \ldots, & \frac{2d-3}{2(d-1)} \\
                                 \frac{1}{2d}, & \frac{3}{2d}, & \ldots, & \frac{d-2}{2d}, & \frac{d+2}{2d},
                                  & \ldots, & \frac{2d-1}{2d}
                               \end{array}|-f(0)
\right]_q\notag
\end{align}
and
\begin{align}
&-\dfrac{2}{q}-\dfrac{\phi(a)}{q}\sum_{\substack{y\in\mathbb{F}_q\\y\neq\pm\sqrt{b}}}\phi(y^2-b)\notag\\
&\times {_{d-1}}G_{d-1}\left[\begin{array}{ccccccc}
                      0, & \frac{1}{d-1}, & \ldots, & \frac{d-3}{2(d-1)}, & \frac{d-1}{2(d-1)}, &\ldots, & \frac{d-2}{d-1} \\
                      \frac{1}{2d}, & \frac{3}{2d}, & \ldots, & \frac{d-2}{2d}, & \frac{d+2}{2d}, & \ldots, & \frac{2d-1}{2d}
                    \end{array}
|-g(y)
\right]_q\notag\\
&={_{d-1}}G_{d-1}\left[\begin{array}{cccccccc}
                          \frac{1}{d-1}, & \frac{2}{d-1}, & \ldots, & \frac{d-1}{2(d-1)}, & \frac{d+1}{2(d-1)},
                           & \ldots, & \frac{d-2}{d-1}, & \frac{1}{2} \\
                          \frac{1}{d}, & \frac{2}{d}, & \ldots, & \frac{d-1}{2d}, & \frac{d+1}{2d}, & \ldots,
                           & \frac{d-2}{d}, & \frac{d-1}{d}
                        \end{array}|-g(0)\right]_q.\notag
\end{align}
\end{theorem}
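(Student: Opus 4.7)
The plan is to prove the final identity by counting $\mathbb{F}_q$-points on a suitably chosen family of hyperelliptic curves in two different ways, in the same spirit as the three preceding identities in the theorem. Concretely, one selects a curve $C_{a,b}: u^2 = \alpha(a,b)\, x^d + \beta(a,b)\, x + \gamma(a,b)$, with coefficients arranged so that the parameter combination that naturally emerges from the point-count character sum is precisely $-g(0)$, matching the argument on the right-hand side, while a second evaluation of the same count manifests the sum over $y$ on the left-hand side.

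The first step is a direct evaluation of $\#C_{a,b}(\mathbb{F}_q)$. Writing $\#C_{a,b}(\mathbb{F}_q) = q + \sum_{x\in\mathbb{F}_q}\phi(\alpha x^d + \beta x + \gamma)$, I would Fourier-expand $\phi$ via the Teichm\"uller character, convert each resulting Gauss sum into a product of $p$-adic gamma quotients using the Gross--Koblitz formula, and reorganise the result in the format of Definition~\ref{defin1}. This identifies the character sum with $\,{_{d-1}}G_{d-1}[\cdots\,|\,-g(0)]_q$, giving the right-hand side up to contributions from the degenerate characters, which must be tracked separately.

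The second step is an alternative evaluation of the same count, obtained by introducing an auxiliary variable $y$ through a linear substitution in $u$ or $x$ that rewrites the defining polynomial with $b-y^2$ in place of $b$. Summing over $y\in\mathbb{F}_q$ and then applying the Gross--Koblitz machinery of the first step to the inner character sum \emph{for each fixed} $y$ identifies the point count with $\sum_y \phi(y^2-b)\cdot{_{d-1}}G_{d-1}[\cdots\,|\,-g(y)]_q$, scaled by $\phi(a)/q$ from the normalisation used. Equating the two expressions for $\#C_{a,b}(\mathbb{F}_q)$ yields the claimed identity.

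The main obstacle is the isolation of the two exceptional fibres at $y=\pm\sqrt{b}$, which exist precisely because $b$ is a square. At these values $g(y)=0$, so the inner ${_{d-1}}G_{d-1}$ value reduces to its limiting form at $t=0$ and the associated character sum no longer fits the generic $_{d-1}G_{d-1}$ template; instead it collapses to an expression of the form $\sum_{j=0}^{l-1}(\chi^{j}\phi)(-a)$, with $\chi$ of order $l=\gcd(d-1,q-1)$ enumerating the characters trivial on $(d-1)$th powers. These two fibres also contribute a normalised trivial term that produces the constant $-2/q$ on the left-hand side. Pinning these degenerate contributions down correctly is the technical heart of the argument; once they are separated from the generic terms, equating the two evaluations of $\#C_{a,b}(\mathbb{F}_q)$ delivers the stated identity.
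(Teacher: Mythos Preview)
Your overall strategy matches the paper's: compute $\#E_d'(\mathbb{F}_q)$ for $E_d'\colon y^2=x^d+ax^{d-1}+b$ in two ways --- once directly (the paper's Theorem~\ref{thm-6}), once by fibring over $y$ and applying the zero-count formula Theorem~\ref{thm-10} to each fibre --- and equate. Two points in your write-up need correction, however.

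First, the curve must carry the $ax^{d-1}$ term, not a linear $\beta x$ as you wrote; a curve of the shape $u^2=\alpha x^d+\beta x+\gamma$ produces the $f$-identities, not the $g$-identities. Relatedly, no ``linear substitution'' is needed to introduce $y$: the fibre-count $N_d'=\sum_{y\in\mathbb{F}_q}N\bigl(x^d+ax^{d-1}+(b-y^2)=0\bigr)$ is immediate from the definition of $N_d'$, and the paper then invokes Theorem~\ref{thm-10} for each summand rather than redoing Gross--Koblitz fibrewise.

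Second, and this is the genuine gap, your analysis of the degenerate fibres at $y=\pm\sqrt b$ belongs to the wrong identity. For the fourth identity the exceptional polynomial is $x^d+ax^{d-1}=x^{d-1}(x+a)$, which has exactly two roots $0$ and $-a$; this contributes $4$ to $N_d'$ and, after equating with Theorem~\ref{thm-6} (where $\phi(b)=1$) and dividing by $-q$, yields only the constant $-2/q$. There is \emph{no} $\sum_{j}(\chi^{j}\phi)(-a)$ contribution here. That character sum appears instead in the \emph{third} identity, where the degenerate polynomial is $x^d+ax=x(x^{d-1}+a)$ and Lemma~\ref{lemma7} gives $N(x^{d-1}=-a)=\sum_{j=0}^{l-1}\chi^{j}(-a)$; the extra factor $\phi$ enters only afterwards, when one multiplies through by the $\phi(-a)$ coming from Theorem~\ref{thm-8}. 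So your ``technical heart'' paragraph has the two degenerate computations swapped.
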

We now give some examples to show how the above theorems are applied in specific cases.
\begin{example}\label{exa-1}
Let $a\neq 0$ and $p\geq 5$. Taking $d=4$ and $b=1$ in Theorem \ref{thm-1}, we deduce that
 \begin{align}
  &\sum_{\substack{y\in\mathbb{F}_q\\
y\neq\pm{1}}}\phi(y^2-1)\cdot{_{3}}G_{3}\left[\begin{array}{ccc}
                       \frac{1}{6}, & \frac{1}{2}, & \frac{5}{6}\\
                       0, & \frac{1}{4}, &\frac{3}{4} \end{array}|\frac{256}{27a^4}(1-y^2)^3
\right]_q\notag\\
&=\left\{
                                  \begin{array}{ll}
                                    -3-q\cdot {_2}G_2\left[ \begin{array}{cc}
                                                         \frac{1}{3}, & \frac{2}{3} \\
                                                         \frac{1}{4}, & \frac{3}{4}
                                                       \end{array}|\dfrac{256}{27a^4}\right]_q \hbox{if~ $q\not\equiv 1\pmod{3}$;} \\
                                    -3-4\cdot \text{Re} ~\chi{_3}(-a)-q\cdot {_2}G_2\left[ \begin{array}{cc}
              \frac{1}{3}, & \frac{2}{3} \\
              \frac{1}{4}, & \frac{3}{4}
            \end{array}|\dfrac{256}{27a^4}
 \right]_q  \hbox{if ~$q\equiv 1\pmod{3}$,}
                                  \end{array}
                                \right. \notag
 \end{align}
 where $\chi_{3}$ is a character of order $3$.
 Also,
 \begin{align}
 &\sum_{\substack{y\in\mathbb{F}_q\\
y\neq\pm{1}}}\phi(y^2-1)\,{_{3}}G_{3}\left[\begin{array}{ccc}
                       \frac{1}{6}, & \frac{1}{2}, & \frac{5}{6}\\
                       0, & \frac{1}{4}, &\frac{3}{4} \end{array}|\frac{256}{27a^4}(1-y^2)
\right]_q=-3-q\cdot {_{2}}G_{2}\left[\begin{array}{cc}
                       \frac{1}{3}, & \frac{2}{3}\\
                       \frac{1}{4}, & \frac{3}{4}\end{array}|\frac{256}{27a^4}\right]_q.\notag
\end{align}
\end{example}
\begin{example}\label{exa-2}
Let $a\neq 0$ and $p\geq 5$. Taking $d=3$ and $b=1$ in Theorem \ref{thm-2}, we deduce that
\begin{align}
 &\sum_{\substack{y\in\mathbb{F}_q\\
y\neq\pm{1}}}{_{2}}G_{2}\left[\begin{array}{cc}
                       0, & \frac{1}{2}\\
                       \frac{1}{6}, & \frac{5}{6}\end{array}|-\frac{27}{4a^3}(1-y^2)^2
\right]_q=-2-2\phi(-a)-q\cdot{_{2}}G_{2}\left[\begin{array}{cc}
                       \frac{1}{4}, & \frac{3}{4}\\
                       \frac{1}{6}, & \frac{5}{6} \end{array}|-\frac{27}{4a^3}\right]_q\notag
\end{align}
and
\begin{align}
 &\sum_{\substack{y\in\mathbb{F}_q\\
y\neq\pm{1}}}\phi(y^2-1)\,{_{2}}G_{2}\left[\begin{array}{cc}
                       0, & \frac{1}{2}\\
                       \frac{1}{6}, & \frac{5}{6}\end{array}|-\frac{27}{4a^3}(1-y^2)
\right]_q\notag\\
&\hspace{2.5cm}=-2\phi(a)-q\phi(a)\cdot{_{2}}G_{2}\left[\begin{array}{cc}
                       \frac{1}{2}, & \frac{1}{2}\\
                       \frac{1}{3}, & \frac{2}{3} \end{array}|-\frac{27}{4a^3}\right]_q.\notag
\end{align}
\end{example}
\begin{example}\label{exa-3}
Let $a\neq 0$. Let $p=3$ or $p\geq 7$. Taking $d=5$ and $b=1$ in Theorem \ref{thm-2}, we deduce that
 \begin{align}
  &\sum_{\substack{y\in\mathbb{F}_q\\
y\neq\pm{1}}}{_{4}}G_{4}\left[\begin{array}{cccc}
                       0, &\frac{1}{4}, & \frac{1}{2}, & \frac{3}{4}\\
                       \frac{1}{10}, & \frac{3}{10}, & \frac{7}{10}, &\frac{9}{10} \end{array}|-\frac{3125}{256a^5}(1-y^2)^4
\right]_q\notag\\
&=\left\{
                                  \begin{array}{ll}
                                   -2-2\phi(-a)-q\cdot {_4}G_4\left[ \begin{array}{cccc}
                                                         \frac{1}{8},&\frac{3}{8},&\frac{5}{8}, & \frac{7}{8} \\
                                                        \frac{1}{10},&\frac{3}{10},&\frac{7}{10}, & \frac{9}{10}
                                                       \end{array}|-\dfrac{3125}{256a^5}\right]_q \hbox{if~ $q\not\equiv 1\pmod{4}$;} \\[18pt]
                                   -2-2\phi(-a) -4\cdot \text{Re} ~\chi{_4}(-a)-q\cdot {_4}G_4\left[ \begin{array}{cccc}
                                                         \frac{1}{8},&\frac{3}{8},&\frac{5}{8}, & \frac{7}{8} \\
                                                        \frac{1}{10},&\frac{3}{10},&\frac{7}{10}, & \frac{9}{10}
                                                       \end{array}|-\dfrac{3125}{256a^5}\right]_q \notag\\[18pt]
                                                       &\hspace{-3.2cm}\hbox{if ~$q\equiv 1\pmod{4}$,}
                                  \end{array}
                                \right. \notag
 \end{align}
 where $\chi_{4}$ is a character of order $4$.
 Also,
 \begin{align}
 &\phi(a)\sum_{\substack{y\in\mathbb{F}_q\\
y\neq\pm{1}}}\phi(y^2-1){_{4}}G_{4}\left[\begin{array}{cccc}
                       0, &\frac{1}{4}, & \frac{1}{2}, & \frac{3}{4}\\
                       \frac{1}{10}, & \frac{3}{10}, & \frac{7}{10}, &\frac{9}{10} \end{array}|-\frac{3125}{256a^5}(1-y^2)
\right]_q\notag\\
&\hspace{2cm}=-2-q\cdot {_{4}}G_{4}\left[\begin{array}{cccc}
                       \frac{1}{4}, &\frac{1}{2}, & \frac{3}{4}, & \frac{1}{2}\\
                       \frac{1}{5}, & \frac{2}{5}, & \frac{3}{5}, &\frac{4}{5} \end{array}|-\frac{3125}{256a^5}
\right]_q.\notag
\end{align}
\end{example}
Along the proof of Theorem \ref{thm-2} we also derive the following transformation for $_{2}G_2[\cdots]$ as a special case.
\begin{theorem}\label{thm-11}
Let $q=p^r$, $p>3$ be a prime. Let $a, b\in\mathbb{F}_{q}^{\times}$ and $\dfrac{-27b^2}{4a^3}\neq1$. Then
\begin{align}
{_{2}}G_2\left[\begin{array}{cc}
                 \frac{1}{4}, & \frac{3}{4} \\
                 \frac{1}{3}, & \frac{2}{3}
               \end{array}|\frac{-27b^2}{4a^3}
\right]_q=\phi(-a)\cdot {_{2}}G_2\left[\begin{array}{cc}
                 \frac{1}{4}, & \frac{3}{4} \\
                 \frac{1}{6}, & \frac{5}{6}
               \end{array}|\frac{-27b^2}{4a^3}
\right]_q.\notag
\end{align}
In particular, if $\dfrac{27b^2}{4a^6}\neq1$ then
\begin{align}
{_{2}}G_2\left[\begin{array}{cc}
                 \frac{1}{4}, & \frac{3}{4} \\
                 \frac{1}{3}, & \frac{2}{3}
               \end{array}|\frac{27b^2}{4a^6}
\right]_q={_{2}}G_2\left[\begin{array}{cc}
                 \frac{1}{4}, & \frac{3}{4} \\
                 \frac{1}{6}, & \frac{5}{6}
               \end{array}|\frac{27b^2}{4a^6}
\right]_q.\notag
\end{align}
\end{theorem}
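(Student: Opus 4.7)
The plan is to extract Theorem \ref{thm-11} as a byproduct of the point-counting computations used to establish Theorem \ref{thm-2} in the case $d=3$. When $d=3$, the family of hyperelliptic curves in that proof degenerates to an elliptic family essentially of the form $y^{2}=ax^{3}+b$, and the two halves of Theorem \ref{thm-2} for $d=3$ end up producing two different $_{2}G_{2}$-expressions for (essentially) the same underlying character sum/point count over $\mathbb{F}_{q}$. One of these expressions produces a $_{2}G_{2}$ with bottom parameters $\{1/6,\,5/6\}$, coming from the decomposition of the relevant Gauss sums via a character of order $6$. The other expression produces a $_{2}G_{2}$ with bottom parameters $\{1/3,\,2/3\}$, coming from a decomposition via a character of order $3$ combined with $\phi$. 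The discrepancy between these two routes is exactly the quadratic twist $\phi(-a)$.

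Concretely, I would first isolate from the $d=3$ case of Theorem \ref{thm-2} the two $_{2}G_{2}$-values arising on the right-hand sides, confirming that they both compute the same character sum (up to the explicit constants and $\phi(a)$ prefactors already recorded in the theorem). Second, I would apply the duplication formula for Morita's $p$-adic gamma function, i.e.\ the analogue of the Gross--Koblitz/Davenport--Hasse relation
\[
\Gamma_{p}(\langle 2x\rangle)\;=\;\Gamma_{p}(\langle x\rangle)\,\Gamma_{p}(\langle x+\tfrac{1}{2}\rangle)\big/\Gamma_{p}(\tfrac{1}{2})\qquad\text{(up to a controlled sign)},
\]
termwise inside the sum defining the $_{2}G_{2}$ with bottom parameters $\{1/6,\,5/6\}$. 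This converts the $\Gamma_{p}$-values at $1/6$ and $5/6$ into $\Gamma_{p}$-values at $1/3$ and $2/3$, with a residual character-evaluation that collapses to $\phi(-a)$. Equating the two resulting expressions yields the first identity of Theorem \ref{thm-11}. The ``in particular'' statement is then immediate from the substitution $a\mapsto -a^{2}$: the argument $-27b^{2}/4a^{3}$ becomes $27b^{2}/4a^{6}$, while $\phi(-a)$ becomes $\phi(a^{2})=1$, so the transformation collapses to a plain equality of the two $_{2}G_{2}$-values.

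The main obstacle I anticipate is the careful bookkeeping of the signs and of the powers of $-p$ that appear in Definition \ref{defin1}: the duplication formula must be applied uniformly in the summation index $j$, and one has to verify that the floor-function exponents of $-p$ reassemble correctly after the conversion of parameters, so that what remains is precisely a single Teichm\"uller evaluation at $-a$. A secondary, milder check is that the excluded value $-27b^{2}/4a^{3}=1$ corresponds exactly to the singular locus of the underlying elliptic family, so that no boundary contributions (from degenerate fibers) enter the comparison. The hypothesis $p>3$ is used both to ensure that $\Gamma_{p}$ is defined at $1/3,1/6,2/3,5/6$ and that the relevant Teichm\"uller characters of orders $3$ and $6$ exist.
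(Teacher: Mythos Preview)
Your plan has a genuine gap. The paper's proof is much shorter than you imagine and rests on an input you never invoke: McCarthy's earlier formula \cite[Theorem~1.2]{mccarthy2}, which already expresses the point count of $y^2=x^3+ax+b$ over $\mathbb{F}_q$ as
\[
q-\phi(b)\cdot q\cdot{_{2}}G_2\left[\begin{array}{cc}\tfrac14&\tfrac34\\ \tfrac13&\tfrac23\end{array}\Big|\tfrac{-27b^2}{4a^3}\right]_q.
\]
The other expression, with bottom parameters $\tfrac16,\tfrac56$, comes from the $d=3$ case of Theorem~\ref{thm-4} of the present paper (the point-count formula for $E_d$), not from Theorem~\ref{thm-2}. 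Since both formulas count points on the \emph{same} curve $E_3:y^2=x^3+ax+b$, equating them gives Theorem~\ref{thm-11} immediately; no termwise duplication formula is needed, and the ``in particular'' follows by substituting $a\mapsto -a^2$ exactly as you say.

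Your proposed route through Theorem~\ref{thm-2} does not work as stated. The two halves of Theorem~\ref{thm-2} for $d=3$ arise from two \emph{different} curves, $E_3:y^2=x^3+ax+b$ and $E_3':y^2=x^3+ax^2+b$, so their right-hand sides are not two expressions for one character sum. Moreover, the right-hand $_2G_2$-values that actually appear there have parameter sets $\bigl\{\tfrac14,\tfrac34\,;\,\tfrac16,\tfrac56\bigr\}$ and $\bigl\{\tfrac12,\tfrac12\,;\,\tfrac13,\tfrac23\bigr\}$; the combination $\bigl\{\tfrac14,\tfrac34\,;\,\tfrac13,\tfrac23\bigr\}$ never shows up, so there is nothing to compare. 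Your duplication-formula step might in principle furnish a direct $p$-adic proof, but as written it is grafted onto a mistaken identification and would have to be reformulated as a standalone argument (with the full bookkeeping of the $(-p)$-exponents) to be convincing. Finally, the remark that characters of orders $3$ and $6$ must exist is off the mark: the whole point of the $_nG_n$ formalism is that no such congruence condition on $q$ is required.
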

\begin{remark}
 In \cite{BS1, BS3}, the first and second authors derived transformations for the function ${_{2}}G_{2}[\cdots]_q$ with different parameters.
 We can now derive many such transformations by combining the transformation of Theorem \ref{thm-11} with those given in \cite{BS1, BS3}.
\end{remark}
Let $d\geq 2$. In \cite{BS2}, the first and second authors expressed the number of distinct zeros of the polynomials
$x^d+ax+b$ and $x^d+ax^{d-1}+b$ over $\mathbb{F}_q$ in terms of values of
the function $_{d-1}G_{d-1}[\cdots]$. We now state the following four theorems from \cite{BS2} which we will need to prove our main results.
\begin{theorem}\emph{(\cite[Theorem 1.2]{BS2})}\label{thm-7}
Let $d\geq2$ be even, and let $p$ be an odd prime such that $p\nmid d(d-1)$. Let $a,b\in \mathbb{F}_q^{\times}$. If $N(x^d+ax+b=0)$
denotes the number of distinct zeros of the polynomial $x^d+ax+b$ in $\mathbb{F}_q$ then
\begin{align}
&N(x^d+ax+b=0)=1+\phi(-b)\notag\\
&\times{_{d-1}}G_{d-1}\left[\begin{array}{ccccccc}
                              \frac{1}{2(d-1)}, & \frac{3}{2(d-1)}, & \ldots, & \frac{d-1}{2(d-1)}, & \frac{d+1}{2(d-1)},
                               & \ldots, & \frac{2d-3}{2(d-1)} \\
                              0, & \frac{1}{d}, & \ldots, & \frac{\frac{d}{2}-1}{d}, & \frac{\frac{d}{2}+1}{d}, & \ldots, & \frac{d-1}{d}
                            \end{array}|\alpha
\right]_q,\notag
\end{align}
where $\alpha=\frac{d}{a}\left(\frac{bd}{a(d-1)}\right)^{d-1}$.
\end{theorem}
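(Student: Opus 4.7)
The plan is to extract the zero count of $f(x)=x^d+ax+b$ by additive-character orthogonality, reduce the resulting exponential sum to a product of Gauss sums via two Hasse--Davenport product formulas, and then invoke the Gross--Koblitz formula to land in the $p$-adic gamma setting of Definition \ref{defin1}. First, writing $\mathbf{1}[f(x)=0]=\tfrac1q\sum_{t\in\mathbb{F}_q}\theta(tf(x))$ for a fixed non-trivial additive character $\theta$ gives
\[
N(x^d+ax+b=0)=1+\frac{1}{q}\sum_{t\in\mathbb{F}_q^{\times}}\theta(tb)\sum_{x\in\mathbb{F}_q}\theta(tx^d+atx),
\]
after isolating the $t=0$ contribution, which accounts for the ``$1+$'' in the stated identity.

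Next, I would expand each $\theta(y)=\frac{-1}{q-1}\sum_{\chi}g(\chi)\overline{\chi}(y)$ for $y\ne 0$ and execute the inner sum over $x\in\mathbb{F}_q^{\times}$ via orthogonality, which forces a single linear relation between the two summation characters and collapses the double sum into a single sum indexed by one character $\overline{\omega}^{j}$ with $j$ running modulo $q-1$. The outer sum over $t\ne 0$ then produces a Gauss sum in $b$, so that the object to evaluate has the form
\[
\frac{-1}{q(q-1)}\sum_{j=0}^{q-2}g(\overline{\omega}^{\,(d-1)j})\,g(\overline{\omega}^{\,dj})^{-1}\,\overline{\omega}^{\,j}(c)
\]
for an explicit constant $c$ depending on $a,b,d$. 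Applying the Hasse--Davenport product formula to $g(\overline{\omega}^{\,(d-1)j})$ splits it into $d-1$ Gauss sums at shifts $\tfrac{i}{d-1}$, and similarly the reciprocal of $g(\overline{\omega}^{\,dj})$ splits at shifts $\tfrac{i}{d}$. Because $d$ is even, the $i=d/2$ factor in the second expansion is a Gauss sum of the quadratic character, and factoring this out cleanly contributes the $\phi(-b)$ prefactor; the other $d-2$ factors in each expansion furnish precisely the parameter lists $\{\tfrac{2i-1}{2(d-1)}\}_{i=1}^{d-2}$ upstairs and $\{\tfrac{i}{d}:1\le i\le d-1,\ i\ne d/2\}$ downstairs.

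Finally, Gross--Koblitz converts each Gauss sum $g(\overline{\omega}^{\,j})$ into a product $\prod_{k=0}^{r-1}\Gamma_p(\langle jp^k/(q-1)\rangle)$ times an explicit power of $-p$. Collecting the coefficients that were stripped during the Hasse--Davenport step yields the argument $\alpha=\tfrac{d}{a}\bigl(\tfrac{bd}{a(d-1)}\bigr)^{d-1}$ inside $\overline{\omega}^{\,j}$, while the $\lfloor\cdots\rfloor$-exponents of $-p$ appearing in Definition \ref{defin1} materialize from tracking the Morita shifts $\langle\cdot\rangle$ versus actual fractional parts. The main obstacle is the combined bookkeeping in the middle step: keeping signs, powers of $-p$, and the Frobenius twists $p^k$ aligned through two simultaneous Hasse--Davenport expansions, and verifying that the quadratic factor separates out exactly as $\phi(-b)$ rather than leaving a residual character that would spoil the match with $_{d-1}G_{d-1}[\cdots]_q$.
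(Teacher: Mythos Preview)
Your overall architecture---additive-character orthogonality, reduction to a single character sum of Gauss sums, then Gross--Koblitz---is sound, but the middle step as you describe it does not go through under the stated hypotheses. The Hasse--Davenport product relation (Theorem~\ref{lemma3}) with modulus $k$ requires $q\equiv 1\pmod{k}$, since one needs characters of exact order $k$ on $\mathbb{F}_q^\times$ to form the product $\prod_{\chi^k=\varepsilon}G(\chi\psi)$. Applying it with $k=d-1$ and $k=d$, as you propose in order to split $g(\overline{\omega}^{(d-1)j})$ and $g(\overline{\omega}^{dj})^{-1}$, would therefore force $q\equiv 1\pmod{d(d-1)}$; the theorem, however, is asserted under the much weaker hypothesis $p\nmid d(d-1)$. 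This is precisely the congruence obstruction that the $_nG_n$-formalism is designed to remove, so the argument as written would only recover a restricted special case.

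The paper does not itself prove this statement (it is quoted from \cite{BS2}), but the proofs of the closely parallel Theorems~\ref{thm-3}--\ref{thm-6} show the correct remedy: one invokes Davenport--Hasse \emph{only} for $k=2$ (always legitimate since $q$ is odd), then applies Gross--Koblitz to pass to $p$-adic gamma values, and only \emph{afterwards} splits $\prod_i\Gamma_p\bigl(\langle\frac{dlp^i}{q-1}\rangle\bigr)$ and $\prod_i\Gamma_p\bigl(\langle\frac{-(d-1)lp^i}{q-1}\rangle\bigr)$ via the $p$-adic multiplication formula of Lemma~\ref{lemma4} (equivalently \eqref{prelim-eq2}), whose sole hypothesis is $p\nmid t$. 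Reordering your argument in this way---Gross--Koblitz first, then the $\Gamma_p$-product formula in place of Gauss-sum Hasse--Davenport---makes the parameter lists and the argument $\alpha$ fall out without any congruence restriction on $q$. Incidentally, after the correct splitting each side contributes $d-1$ factors, not $d-2$ as you wrote; the quadratic contribution $\phi(-b)$ emerges from the $k=2$ step rather than from removing a factor of the $d$-fold product.
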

\begin{theorem}\emph{(\cite[Theorem 1.3]{BS2})}\label{thm-8}
Let $d\geq3$ be odd, and let $p$ be an odd prime such that $p\nmid d(d-1)$.
Let $a,b\in \mathbb{F}_q^{\times}$. If $N(x^d+ax+b=0)$ denotes the number of distinct zeros of the polynomial $x^d+ax+b$ in $\mathbb{F}_q$ then
\begin{align}
&N(x^d+ax+b=0)
=1+\phi(-a)\notag\\
&\times {_{d-1}}G_{d-1}\left[\begin{array}{ccccccc}
                              0, & \frac{1}{d-1}, & \ldots, & \frac{(d-3)/2}{d-1}, & \frac{(d-1)/2}{d-1},
                               & \ldots, & \frac{d-2}{d-1} \\
                              \frac{1}{2d}, & \frac{3}{2d}, & \ldots, & \frac{d-2}{2d}, & \frac{d+2}{2d}, & \ldots, & \frac{2d-1}{2d}
                            \end{array}|-\alpha
\right]_q,\notag
\end{align}
where $\alpha=\frac{d}{a}\left(\frac{bd}{a(d-1)}\right)^{d-1}$.
\end{theorem}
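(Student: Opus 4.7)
The plan is to follow the strategy used in \cite{BS2}: express $N(x^d+ax+b=0)$ as an additive character sum, use Fourier analysis on $\mathbb{F}_q^\times$ to introduce Gauss sums, apply Hasse--Davenport product relations to organize them, and finally invoke the Gross--Koblitz formula to translate Gauss sums into $p$-adic gamma values matching Definition \ref{defin1}. Concretely, I would start from the additive orthogonality identity $\delta_0(t)=\frac{1}{q}\sum_{z\in\mathbb{F}_q}\theta(zt)$, with $\theta$ a fixed nontrivial additive character of $\mathbb{F}_q$, to obtain
\[
N(x^d+ax+b=0) = 1 + \frac{1}{q}\sum_{z\in\mathbb{F}_q^\times}\theta(zb)\sum_{x\in\mathbb{F}_q}\theta(z(x^d+ax)),
\]
the leading $1$ being the $z=0$ contribution.

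Next I would expand each $\theta$-factor in the inner sum via the Fourier inversion $\theta(u)=\frac{1}{q-1}\sum_{\chi\ne\varepsilon}g(\overline{\chi})\chi(u)$, separate the $x$-dependence into its $zx^d$ and $zax$ pieces, and perform the $x$- and $z$-sums using orthogonality of multiplicative characters. This collapses the expression to a single sum over $j\in\{0,1,\ldots,q-2\}$, indexed by the character $\omega^j$, whose summand is a product of Gauss sums with arguments depending on $j/d$ and $j/(d-1)$.

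Then I would apply the Hasse--Davenport product formula to $g(\omega^{jd})$ and $g(\omega^{j(d-1)})$. The first produces the bottom parameters $\{(2k+1)/(2d):0\le k\le d-1\}$; since $d$ is odd, the isolated value $1/2$ (from $k=(d-1)/2$) corresponds to $g(\phi)$, which upon combination with the sign factors coming from $\theta(-zax)$ yields the external $\phi(-a)$ term and removes $1/2$ from the bottom parameter list. The second multiplication formula produces the top parameters $\{k/(d-1):0\le k\le d-2\}$. Finally I would convert every remaining Gauss sum to a product of $\Gamma_p$-values via the Gross--Koblitz formula, absorb the resulting $p$-powers into the $(-p)^{-\lfloor\cdot\rfloor}$ factors of Definition \ref{defin1}, and verify that the resulting argument of $_{d-1}G_{d-1}[\cdots]$ is $-\alpha$ with $\alpha=\frac{d}{a}\bigl(\frac{bd}{a(d-1)}\bigr)^{d-1}$.

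The main obstacle is the bookkeeping in the last two steps: one must check that the $\phi$-Gauss sum extracted from the Hasse--Davenport relation applied to $g(\omega^{jd})$ combines precisely with the sign corrections from the Fourier expansion of $\theta$ to yield $\phi(-a)$, and that the remaining Gauss sums reassemble via Gross--Koblitz into exactly the $_{d-1}G_{d-1}$ expression with the top parameter set $\{0,1/(d-1),\ldots,(d-2)/(d-1)\}$, the bottom parameter set $\{(2k+1)/(2d):0\le k\le d-1\}\setminus\{1/2\}$, and argument $-\alpha$.
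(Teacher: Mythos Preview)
The present paper does not prove this statement at all; it is quoted verbatim from \cite{BS2} (see the attribution in the theorem heading) and used only as an input to the proofs of Theorems \ref{thm-1} and \ref{thm-2}. So there is no ``paper's own proof'' to compare against.

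That said, your outline is essentially the argument of \cite{BS2}, and it is also exactly parallel to the proofs of Theorems \ref{thm-3}--\ref{thm-6} here (which treat the two-variable curves $y^2=x^d+ax+b$ rather than the one-variable polynomial). One point of detail worth correcting: the top and bottom parameter sets do not arise from applying the Davenport--Hasse relation (Theorem \ref{lemma3}) directly to $G(\omega^{jd})$ and $G(\omega^{j(d-1)})$. In practice one first shifts the summation index by $\frac{q-1}{2}$ and uses Theorem \ref{lemma3} with $k=2$ to absorb the resulting factors $G_{\ast+\frac{q-1}{2}}$; this is where the external $\phi(-a)$ appears. Only after applying Gross--Koblitz does one invoke the $p$-adic multiplication formula (Lemma \ref{lemma4}), with $t=d-1$, $d$, $2(d-1)$, $2d$ as needed, to split $\Gamma_p(\langle\frac{-m(d-1)p^i}{q-1}\rangle)$ and $\Gamma_p(\langle\frac{mdp^i}{q-1}\rangle)$ into the products over $h$ that yield the parameters $\frac{h}{d-1}$ and $\frac{2k+1}{2d}$. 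Apart from this reordering of where the product formula enters, your plan is sound and the bookkeeping obstacles you flag are indeed the only nontrivial steps.
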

\begin{theorem}\emph{(\cite[Theorem 1.4]{BS2})}\label{thm-9}
Let $d\geq2$ be even, and let $p$ be an odd prime such that $p\nmid d(d-1)$.
Let $a,b\in \mathbb{F}_q^{\times}$. If $N(x^d+ax^{d-1}+b=0)$ denotes the number of distinct zeros of the polynomial $x^d+ax^{d-1}+b$ in $\mathbb{F}_q$ then
\begin{align}
&N(x^d+ax^{d-1}+b=0)=1+\phi(-b)\notag\\
&\times {_{d-1}}G_{d-1}\left[\begin{array}{ccccccc}
                              \frac{1}{2(d-1)}, & \frac{3}{2(d-1)}, & \ldots, & \frac{d-1}{2(d-1)}, & \frac{d+1}{2(d-1)},
                               & \ldots, & \frac{2d-3}{2(d-1)} \\
                              0, & \frac{1}{d}, & \ldots, & \frac{\frac{d}{2}-1}{d}, & \frac{\frac{d}{2}+1}{d}, & \ldots, & \frac{d-1}{d}
                            \end{array}|\beta
\right]_q,\notag
\end{align}
where $\beta=\frac{bd}{a}\left(\frac{d}{a(d-1)}\right)^{d-1}$.
\end{theorem}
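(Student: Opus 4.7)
The plan is to deduce Theorem \ref{thm-9} directly from Theorem \ref{thm-7} by means of the reciprocal substitution $x \mapsto 1/y$. Since $b \in \mathbb{F}_q^\times$, the polynomial $x^d + ax^{d-1} + b$ does not vanish at $x=0$, so all of its roots lie in $\mathbb{F}_q^\times$. The map $x \mapsto 1/y$ is a bijection of $\mathbb{F}_q^\times$, and substituting $x = 1/y$ into $x^d + ax^{d-1} + b = 0$ and multiplying through by $y^d$ yields
\begin{equation}
b y^d + a y + 1 = 0, \quad \text{i.e.,} \quad y^d + \frac{a}{b}\, y + \frac{1}{b} = 0.\notag
\end{equation}
The constant term of the transformed polynomial is $1/b \neq 0$, so $y = 0$ is not a root, and the bijection between nonzero roots of the two polynomials gives
\begin{equation}
N(x^d + ax^{d-1} + b = 0) \;=\; N\bigl(y^d + (a/b)\,y + (1/b) = 0\bigr).\notag
\end{equation}

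Next, I would apply Theorem \ref{thm-7} to the latter polynomial, taking $a' := a/b$ and $b' := 1/b$, both of which lie in $\mathbb{F}_q^\times$; the divisibility hypothesis $p \nmid d(d-1)$ is inherited verbatim. A direct computation gives
\begin{equation}
\alpha' \;=\; \frac{d}{a'}\left(\frac{b' d}{a'(d-1)}\right)^{d-1} \;=\; \frac{d b}{a}\left(\frac{d}{a(d-1)}\right)^{d-1} \;=\; \beta,\notag
\end{equation}
so the argument of the ${_{d-1}}G_{d-1}[\cdots]$ term produced by Theorem \ref{thm-7} is exactly the $\beta$ appearing in Theorem \ref{thm-9}. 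For the prefactor, $\phi(-b') = \phi(-1/b) = \phi(-1)\phi(b) = \phi(-b)$, since $\phi$ is the quadratic character and is its own inverse. Combining these two identifications with Theorem \ref{thm-7} reproduces Theorem \ref{thm-9} exactly.

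I do not anticipate any serious obstacle: the substance of the theorem has been absorbed into Theorem \ref{thm-7}, and the only work is a single change of variable together with the two elementary simplifications above. The two points requiring care are (i) verifying that no roots are lost or created under $x \mapsto 1/y$, which is immediate from $b, 1/b \in \mathbb{F}_q^\times$, and (ii) carrying through the algebraic identifications $\alpha' = \beta$ and $\phi(-b') = \phi(-b)$, both of which are purely mechanical.
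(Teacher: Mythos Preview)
Your argument is correct. The reciprocal substitution $x\mapsto 1/y$ sets up a bijection between the zeros of $x^d+ax^{d-1}+b$ and those of $y^d+(a/b)y+1/b$ in $\mathbb{F}_q^\times$, neither polynomial vanishes at the origin, and the verifications $\alpha'=\beta$ and $\phi(-1/b)=\phi(-b)$ are straightforward. So Theorem~\ref{thm-9} does follow immediately from Theorem~\ref{thm-7}.

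Note, however, that the present paper does not actually prove Theorem~\ref{thm-9}; it is quoted verbatim from \cite[Theorem~1.4]{BS2} as a background result. In \cite{BS2} the four Theorems~\ref{thm-7}--\ref{thm-10} are established independently by expressing each zero count through character sums, applying the Gross--Koblitz formula and the Davenport--Hasse relation, and then reorganizing into the ${_{d-1}}G_{d-1}$ form. Your route is genuinely different and considerably shorter: rather than repeating that machinery for $x^d+ax^{d-1}+b$, you observe that the reciprocal map transports the problem to $y^d+a'y+b'$ and lets Theorem~\ref{thm-7} do all the work. The advantage of your approach is economy; the advantage of the original is that it treats the two polynomial families on an equal footing without one result depending on the other.
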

\begin{theorem}\emph{(\cite[Theorem 1.5]{BS2})}\label{thm-10}
Let $d\geq3$ be odd, and let $p$ be an odd prime such that $p\nmid d(d-1)$.
Let $a,b\in \mathbb{F}_q^{\times}$. If $N(x^d+ax^{d-1}+b=0)$ denotes the number of distinct zeros of the polynomial $x^d+ax^{d-1}+b$ in $\mathbb{F}_q$ then
\begin{align}
&N(x^d+ax^{d-1}+b=0)=1+\phi(-ab)\notag\\
&\times {_{d-1}}G_{d-1}\left[\begin{array}{ccccccc}
                              0, & \frac{1}{d-1}, & \ldots, & \frac{(d-3)/2}{d-1}, & \frac{(d-1)/2}{d-1},
                               & \ldots, & \frac{d-2}{d-1} \\
                              \frac{1}{2d}, & \frac{3}{2d}, & \ldots, & \frac{d-2}{2d}, & \frac{d+2}{2d}, & \ldots, & \frac{2d-1}{2d}
                            \end{array}|-\beta
\right]_q,\notag
\end{align}
where $\beta=\frac{bd}{a}\left(\frac{d}{a(d-1)}\right)^{d-1}$.
\end{theorem}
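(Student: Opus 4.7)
The plan is to derive Theorem \ref{thm-10} from Theorem \ref{thm-8} by means of the birational change of variables $x \mapsto 1/x$. Since $b \in \mathbb{F}_q^{\times}$, the polynomial $f(x) := x^d + ax^{d-1} + b$ does not vanish at $x=0$, so every zero of $f$ in $\mathbb{F}_q$ lies in $\mathbb{F}_q^{\times}$. For $x \in \mathbb{F}_q^{\times}$, set $y = 1/x$; multiplying $f(x)=0$ by $y^d$ and dividing by $b$ yields the equivalent equation
$$g(y) := y^d + \tfrac{a}{b}\, y + \tfrac{1}{b} = 0.$$
The map $x \mapsto 1/x$ is a bijection on $\mathbb{F}_q^{\times}$, and $g$ also has no zero at $0$ since $1/b \neq 0$, so $N(f=0) = N(g=0)$.

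I would then invoke Theorem \ref{thm-8} with parameters $A = a/b$ and $B = 1/b$, both in $\mathbb{F}_q^{\times}$, whose hypotheses on $d$ (odd, $\geq 3$) and on $p$ ($p\nmid d(d-1)$) coincide with those of Theorem \ref{thm-10}. A routine computation gives
$$\frac{d}{A}\left(\frac{B\,d}{A(d-1)}\right)^{d-1} = \frac{bd}{a}\left(\frac{d}{a(d-1)}\right)^{d-1} = \beta,$$
so the argument of $_{d-1}G_{d-1}[\cdots]_q$ produced by Theorem \ref{thm-8} is exactly $-\beta$, matching the argument in Theorem \ref{thm-10}. For the character prefactor, since $\phi$ has order two one has $\phi(1/b) = \phi(b)$, and hence $\phi(-A) = \phi(-a/b) = \phi(-a)\phi(b) = \phi(-ab)$, again matching the statement.

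The proof contains no genuine obstacle: the only nontrivial input, namely the $_{d-1}G_{d-1}$ expression for zero counts of trinomials $y^d + Ay + B$, is supplied by Theorem \ref{thm-8}, and what remains is the bookkeeping above. The one point to verify carefully is that the substitution $x \mapsto 1/x$ preserves the count of \emph{distinct} roots (it does, by injectivity on $\mathbb{F}_q^{\times}$), and that the degenerate cases $A = 0$ or $B = 0$ do not arise, both of which are ruled out by the hypothesis $a,b \in \mathbb{F}_q^{\times}$. With these checks in hand, Theorem \ref{thm-10} follows immediately.
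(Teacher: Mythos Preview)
Your argument is correct. Note, however, that the present paper does not prove Theorem~\ref{thm-10} at all: it is quoted from \cite[Theorem~1.5]{BS2} as an input result, so there is no in-paper proof against which to compare your approach. That said, your reduction to Theorem~\ref{thm-8} via $x\mapsto 1/x$ is entirely valid---the verification that $\alpha=\beta$ and $\phi(-A)=\phi(-a/b)=\phi(-ab)$ is routine, and the bijection on $\mathbb{F}_q^{\times}$ (together with $f(0)=b\neq 0$ and $g(0)=1/b\neq 0$) gives $N(f=0)=N(g=0)$ exactly as you claim. This is a clean way to see that Theorems~\ref{thm-8} and~\ref{thm-10} are formally equivalent once either is in hand.
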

In Section 4, we will look at the expressions for the number of zeros of the polynomials $x^d+ax+b$ and $x^d+ax^{d-1}+b$
given in the above theorems more closely when $d=3, 4, 5$, and derive certain special values of the functions $_{2}G_{2}[\cdots]$,
$_{3}G_{3}[\cdots]$, and $_{4}G_{4}[\cdots]$.
\section{Notations and Preliminaries}
Throughout this paper $p$ will denote an odd prime, $\mathbb{F}_q$ the finite field of $q=p^r$ elements,
$\mathbb{Z}_p$ the ring of $p$-adic integers, $\mathbb{Q}_p$ the field of $p$-adic numbers,
$\overline{\mathbb{Q}_p}$ the algebraic closure of $\mathbb{Q}_p$, and $\mathbb{C}_p$ the completion of $\overline{\mathbb{Q}_p}$.
Let $\mathbb{Z}_q$ be the ring of integers in the unique unramified extension of $\mathbb{Q}_p$ with residue field $\mathbb{F}_q$.
Let $\mu_{q-1}$ be the group of $(q-1)$-th roots of unity in $\mathbb{C}^{\times}$.
\subsection{Multiplicative characters, Gauss sums and Davenport-Hasse Relation}
Let $\widehat{\mathbb{F}_q^\times}$ denote the group of all multiplicative characters $\chi$ on $\mathbb{F}_q^{\times}$ with values in $\mu_{q-1}$.
It is known that $\widehat{\mathbb{F}_q^\times}$ is a cyclic group of order $q-1$. Let $\varepsilon$ and $\phi$ denote the trivial and quadratic characters, respectively.
The domain of each
$\chi \in \widehat{\mathbb{F}_q^{\times}}$ is extended to $\mathbb{F}_q$ by setting $\chi(0):=0$.
We now state the \emph{orthogonality relations} for multiplicative characters in the following lemma.
\begin{lemma}\emph{(\cite[Chapter 8]{ireland}).}\label{lemma2} We have
\begin{enumerate}
\item $\displaystyle\sum_{x\in\mathbb{F}_q}\chi(x)=\left\{
                                  \begin{array}{ll}
                                    q-1 & \hbox{if~ $\chi=\varepsilon$;} \\
                                    0 & \hbox{if ~~$\chi\neq\varepsilon$.}
                                  \end{array}
                                \right.$
\item $\displaystyle\sum_{\chi\in \widehat{\mathbb{F}_q^\times}}\chi(x)~~=\left\{
                            \begin{array}{ll}
                              q-1 & \hbox{if~~ $x=1$;} \\
                              0 & \hbox{if ~~$x\neq1$.}
                            \end{array}
                          \right.$
\end{enumerate}
\end{lemma}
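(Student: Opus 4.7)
The plan is to prove both orthogonality relations by the same elementary character-shift trick, applied once on $\mathbb{F}_q^{\times}$ and once on $\widehat{\mathbb{F}_q^{\times}}$, so a single idea handles both halves of the lemma.

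For part (1), if $\chi=\varepsilon$, the claim is immediate: $\chi(x)=1$ on $\mathbb{F}_q^{\times}$ and $\chi(0)=0$ by the convention fixed just above the lemma, so the sum equals $q-1$. If $\chi\neq\varepsilon$, I would pick $y\in\mathbb{F}_q^{\times}$ with $\chi(y)\neq 1$ (which exists by definition of non-triviality) and set $S=\sum_{x\in\mathbb{F}_q}\chi(x)$. Since the map $x\mapsto yx$ permutes $\mathbb{F}_q$ (fixing $0$), a change of variables gives $S=\sum_{x}\chi(yx)=\chi(y)\,S$. Because $\chi(y)\neq 1$, this forces $S=0$.

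For part (2), I would argue dually. When $x=1$, every character evaluates to $1$, so the sum equals $|\widehat{\mathbb{F}_q^{\times}}|=q-1$; when $x=0$, every summand vanishes by the convention $\chi(0):=0$, and the sum is $0$. When $x\in\mathbb{F}_q^{\times}\setminus\{1\}$, I would use that $\widehat{\mathbb{F}_q^{\times}}$ is cyclic of order $q-1$: fixing a generator $\psi$ (determined by sending a primitive root $g$ of $\mathbb{F}_q^{\times}$ to a primitive $(q-1)$-th root of unity), one has $\psi(x)\neq 1$, since writing $x=g^k$ with $k\not\equiv 0\pmod{q-1}$ gives $\psi(x)=\zeta^k\neq 1$. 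Setting $T=\sum_{\chi}\chi(x)$ and re-indexing $\chi\mapsto\psi\chi$, which permutes $\widehat{\mathbb{F}_q^{\times}}$, yields $\psi(x)\,T=T$, so $T=0$.

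There is no real obstacle here; the only ingredient beyond symbol manipulation is the existence of a separating character in each direction, which is precisely the non-degeneracy of the evaluation pairing $\mathbb{F}_q^{\times}\times\widehat{\mathbb{F}_q^{\times}}\to\mu_{q-1}$. That non-degeneracy is built into the structural statement, quoted above the lemma, that $\widehat{\mathbb{F}_q^{\times}}$ is cyclic of order $q-1$, which is why the result is simply attributed to Chapter 8 of Ireland--Rosen rather than reproved in detail.
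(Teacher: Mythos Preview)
Your argument is correct and is exactly the standard shift-trick proof of the orthogonality relations; the paper itself does not supply a proof at all but simply cites Chapter~8 of Ireland--Rosen, so there is no alternative approach to compare against. Your treatment of the boundary cases ($\chi=\varepsilon$ in (1), and $x=0$ or $x=1$ in (2)) is careful and matches the paper's convention $\chi(0):=0$ for all $\chi$, including $\varepsilon$.
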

\par
It is known that $\mathbb{Z}_q^{\times}$ contains all the $(q-1)$-th roots of unity.
Therefore, we can consider multiplicative characters on $\mathbb{F}_q^\times$
to be maps $\chi: \mathbb{F}_q^{\times} \rightarrow \mathbb{Z}_q^{\times}$.
\par We now introduce some properties of Gauss sums. For further details, see \cite{evans}. Let $\zeta_p$ be a fixed primitive $p$-th root of unity
in $\overline{\mathbb{Q}_p}$. The trace map $\text{tr}: \mathbb{F}_q \rightarrow \mathbb{F}_p$ is given by
\begin{align}
\text{tr}(\alpha)=\alpha + \alpha^p + \alpha^{p^2}+ \cdots + \alpha^{p^{r-1}}.\notag
\end{align}
Then the additive character
$\theta: \mathbb{F}_q \rightarrow \mathbb{Q}_p(\zeta_p)$ is defined by
\begin{align}
\theta(\alpha)=\zeta_p^{\text{tr}(\alpha)}.\notag
\end{align}
It is easy to see that
\begin{align}\label{new-eq-1}
 \theta(a+b)=\theta(a)\theta(b)
\end{align}
and
\begin{align}\label{new-eq-2}
 \sum_{x\in \mathbb{F}_q}\theta(x)=0.
\end{align}
For $\chi \in \widehat{\mathbb{F}_q^\times}$, the \emph{Gauss sum} is defined by
\begin{align}
G(\chi):=\sum_{x\in \mathbb{F}_q}\chi(x)\theta(x).\notag
\end{align}
If $\zeta_{q-1}$ is a primitive $(q-1)$-th root of unity in $\overline{\mathbb{Q}_p}$, then $G(\chi)$ lies in $\mathbb{Q}_p(\zeta_p, \zeta_{q-1})$.
We let $T$ denote a fixed generator of $\widehat{\mathbb{F}_q^\times}$ and denote by $G_m$ the Gauss sum $G(T^m)$.
Using \eqref{new-eq-2}, we have
\begin{align}\label{new-eq-3}
G_{0}=G(\varepsilon)=\sum_{x\in \mathbb{F}_q}\varepsilon(x) \theta(x)=\sum_{x\in \mathbb{F}_q^{\times}}\theta(x)=-1.
\end{align}
We will use the following results on Gauss sums to prove our main results.
\begin{lemma}\emph{(\cite[Theorem 1.1.4 (a)]{evans}).}\label{fusi3}
If $k\in\mathbb{Z}$ and $T^k\neq\varepsilon$, then
$$G_kG_{-k}=qT^k(-1).$$
\end{lemma}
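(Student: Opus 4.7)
The plan is to expand the product $G_kG_{-k}$ directly from the definition, then reindex the double sum so that the orthogonality relations of Lemma \ref{lemma2} and the additive identity \eqref{new-eq-2} collapse everything. This is the standard Gauss-sum computation and the paper gives exactly the tools needed.

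First I would write
\begin{equation*}
G_kG_{-k}=\sum_{x\in\mathbb{F}_q}\sum_{y\in\mathbb{F}_q}T^k(x)T^{-k}(y)\,\theta(x)\theta(y),
\end{equation*}
where the $x=0$ and $y=0$ terms vanish because $T^{\pm k}(0)=0$. For $x\neq 0$ I would substitute $y=xz$ with $z\in\mathbb{F}_q^\times$, so that $T^k(x)T^{-k}(xz)=T^{-k}(z)$ (the $T^k(x)T^{-k}(x)$ part cancels since $T^k(x)\neq 0$) and $\theta(x)\theta(xz)=\theta(x(1+z))$ by \eqref{new-eq-1}. This gives
\begin{equation*}
G_kG_{-k}=\sum_{z\in\mathbb{F}_q^\times}T^{-k}(z)\sum_{x\in\mathbb{F}_q^\times}\theta(x(1+z)).
\end{equation*}

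Next I would split off the special value $z=-1$. When $z=-1$ the inner sum is $\sum_{x\in\mathbb{F}_q^\times}\theta(0)=q-1$, contributing $(q-1)T^{-k}(-1)$. When $z\neq -1,0$, we have $1+z\in\mathbb{F}_q^\times$, so $x\mapsto x(1+z)$ permutes $\mathbb{F}_q^\times$, and by \eqref{new-eq-2} together with $\theta(0)=1$ the inner sum equals $-1$. Combining,
\begin{equation*}
G_kG_{-k}=(q-1)T^{-k}(-1)-\sum_{\substack{z\in\mathbb{F}_q^\times\\ z\neq -1}}T^{-k}(z)
=qT^{-k}(-1)-\sum_{z\in\mathbb{F}_q^\times}T^{-k}(z).
\end{equation*}
Since $T^k\neq\varepsilon$ implies $T^{-k}\neq\varepsilon$, part (1) of Lemma \ref{lemma2} kills the remaining sum, leaving $G_kG_{-k}=qT^{-k}(-1)$.

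Finally, observe that $T^k(-1)^2=T^k(1)=1$, so $T^k(-1)\in\{\pm 1\}$ and hence $T^{-k}(-1)=T^k(-1)^{-1}=T^k(-1)$, giving the claimed formula $G_kG_{-k}=qT^k(-1)$. There is no real obstacle here; the only thing to watch is bookkeeping the $z=-1$ term separately and remembering that $\theta(0)=1$ so that $\sum_{x\in\mathbb{F}_q^\times}\theta(\cdot)$ differs from $\sum_{x\in\mathbb{F}_q}\theta(\cdot)$ by exactly $1$.
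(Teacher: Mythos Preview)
Your argument is correct and is exactly the standard textbook computation. Note that the paper does not actually prove this lemma; it merely cites \cite[Theorem 1.1.4 (a)]{evans}, so there is no in-paper proof to compare against, but your proof is precisely the classical one found in that reference.
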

\begin{lemma}\emph{(\cite[Lemma 2.2]{Fuselier}).}\label{lemma1}
For all $\alpha \in \mathbb{F}_q^{\times}$, $$\theta(\alpha)=\frac{1}{q-1}\sum_{m=0}^{q-2}G_{-m}T^m(\alpha).$$
\end{lemma}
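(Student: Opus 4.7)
The plan is to prove Lemma \ref{lemma1} by starting from the right-hand side, substituting the definition of the Gauss sum, interchanging the order of summation, and then applying the second multiplicative orthogonality relation of Lemma \ref{lemma2}. This is essentially Fourier inversion on the multiplicative group $\mathbb{F}_q^\times$.

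More concretely, I would expand $G_{-m}$ in the right-hand side as
\begin{align}
\frac{1}{q-1}\sum_{m=0}^{q-2}G_{-m}T^m(\alpha)
&=\frac{1}{q-1}\sum_{m=0}^{q-2}T^m(\alpha)\sum_{x\in\mathbb{F}_q}T^{-m}(x)\theta(x).\notag
\end{align}
Since by convention $T^{-m}(0)=0$ whenever $T^{-m}\neq\varepsilon$, and the $m=0$ term gives $\varepsilon(0)=0$ in this extension, all $x=0$ contributions vanish (or, for $T^{-m}=\varepsilon$, one can treat $x=0$ separately and note that its contribution $\theta(0)=1$ gets killed in the end). Either way, we may restrict the inner sum to $x\in\mathbb{F}_q^\times$, swap the two summations, and use multiplicativity $T^m(\alpha)T^{-m}(x)=T^m(\alpha x^{-1})$ to obtain
\begin{align}
\frac{1}{q-1}\sum_{x\in\mathbb{F}_q^\times}\theta(x)\sum_{m=0}^{q-2}T^m(\alpha x^{-1}).\notag
\end{align}

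By Lemma \ref{lemma2}(2), since $T$ generates $\widehat{\mathbb{F}_q^\times}$ the inner sum runs over all characters in $\widehat{\mathbb{F}_q^\times}$ and equals $q-1$ precisely when $\alpha x^{-1}=1$, i.e.\ $x=\alpha$, and is $0$ otherwise. Therefore only the term $x=\alpha$ survives, giving $\frac{1}{q-1}\cdot(q-1)\cdot\theta(\alpha)=\theta(\alpha)$, as required.

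The only subtlety, and the one point where one must be careful, is the handling of $x=0$ in the expansion of $G_{-m}$ for $m=0$ (where $T^0=\varepsilon$ and $\varepsilon$ is extended by $\varepsilon(0)=0$ under the paper's convention). This is resolved by the convention already adopted in the excerpt: $\chi(0):=0$ for every $\chi\in\widehat{\mathbb{F}_q^\times}$ including $\varepsilon$, so the Gauss sum definition implicitly excludes $x=0$ and the interchange of summations is completely routine. With that convention in hand, no further obstacle arises, and the lemma follows immediately from multiplicative orthogonality.
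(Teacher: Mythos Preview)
Your proof is correct and is exactly the standard Fourier-inversion argument on $\widehat{\mathbb{F}_q^\times}$; the paper itself does not supply a proof but simply cites \cite[Lemma~2.2]{Fuselier}, and your argument is the one that underlies that reference.
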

We now state the Davenport-Hasse Relation which plays an important role in the proof of our results.
\begin{theorem}\emph{(\cite[Davenport-Hasse Relation]{Lang}).}\label{lemma3}
Let $k$ be a positive integer and let $q=p^r$ be a prime power such that $q\equiv 1 \pmod{k}$. For multiplicative characters
$\chi, \psi \in \widehat{\mathbb{F}_q^\times}$, we have
\begin{align}
\prod_{\chi^k=\varepsilon}G(\chi \psi)=-G(\psi^k)\psi(k^{-k})\prod_{\chi^k=\varepsilon}G(\chi).\notag
\end{align}
\end{theorem}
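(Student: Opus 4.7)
The plan is to prove the product form of the Hasse--Davenport relation by the classical Jacobi-sum strategy: expand the left-hand product of Gauss sums as a multi-variable character sum, reorganize via a scaling substitution to isolate a factor of $G(\psi^k)$, and fix the remaining $\psi$-independent constant by specializing $\psi = \varepsilon$.

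First, since $q \equiv 1 \pmod{k}$ the characters with $\chi^k = \varepsilon$ form a cyclic subgroup of $\widehat{\mathbb{F}_q^{\times}}$ of order $k$; I would fix a generator $\eta$ of this subgroup so that both products in the claim run over $\eta^0, \eta^1, \ldots, \eta^{k-1}$. Expanding each Gauss sum by definition and using the additive-homomorphism property \eqref{new-eq-1} gives
\begin{equation*}
\prod_{j=0}^{k-1} G(\eta^j \psi) = \sum_{(y_0, \ldots, y_{k-1}) \in (\mathbb{F}_q^{\times})^k} \psi(y_0 y_1 \cdots y_{k-1}) \left( \prod_{j=0}^{k-1} \eta^j(y_j) \right) \theta(y_0 + y_1 + \cdots + y_{k-1}).
\end{equation*}

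Next, I would perform the substitution $y_j = s u_j$, with $s \in \mathbb{F}_q^{\times}$ an overall scaling parameter and the $u_j$ normalized so that $u_0 = 1$. Collecting the $s$-dependence extracts a factor $\psi(s^k)\, \eta^{\binom{k}{2}}(s)$, and the sum over $s$ becomes the Gauss sum $G(\psi^k \eta^{\binom{k}{2}})$ times $(\psi^k \eta^{\binom{k}{2}})(v^{-1})$, where $v := 1 + u_1 + \cdots + u_{k-1}$. A further rescaling $u_j \mapsto u_j/k$ in the remaining inner sum over the $u_j$ pulls out the target factor $\psi(k^{-k})$ and collapses the inner sum to a universal, $\psi$-independent constant. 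I would pin down this constant by specializing $\psi = \varepsilon$: both sides then reduce to $\prod_{\chi^k = \varepsilon} G(\chi)$ once $G_0 = -1$ from \eqref{new-eq-3} is accounted for, and this accounts for the sign $-1$ appearing in the claim.

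The main obstacle is the bookkeeping of (i) the degenerate strata where the scaling substitution fails, namely the locus $v = 0$ (whose contribution must vanish by character orthogonality, Lemma~\ref{lemma2}(1), when $\psi^k \eta^{\binom{k}{2}} \neq \varepsilon$) and the tuples with some $y_j = 0$, and (ii) the auxiliary twist $\eta^{\binom{k}{2}}$, which is trivial for odd $k$ but equals the unique order-two character of the subgroup $\{\chi : \chi^k = \varepsilon\}$ when $k$ is even. In the even case one must verify, using Lemma~\ref{fusi3} together with the invariance of the left-hand product under $\psi \mapsto \psi\eta$, that the twist can be absorbed into a relabeling of the product on the left, leaving $G(\psi^k)$ rather than $G(\psi^k \eta^{\binom{k}{2}})$ on the right. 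With that bookkeeping complete, the identity $\prod_{\chi^k = \varepsilon} G(\chi\psi) = -G(\psi^k)\, \psi(k^{-k})\, \prod_{\chi^k = \varepsilon} G(\chi)$ follows.
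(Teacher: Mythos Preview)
The paper does not supply a proof of this statement: Theorem~\ref{lemma3} is quoted verbatim from Lang's \emph{Cyclotomic Fields} (reference \cite{Lang}) and used as a black box throughout Section~3. There is therefore no in-paper argument against which to compare your proposal.

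As for the proposal itself, the overall strategy---expand the left-hand side as a $k$-fold character sum, factor off an overall scaling variable $s$ to isolate a single Gauss sum, and determine the residual constant by specializing $\psi=\varepsilon$---is indeed one of the standard routes to the Hasse--Davenport product relation. Your identification of the auxiliary twist $\eta^{\binom{k}{2}}$ and its parity dependence is correct. That said, the sketch is genuinely incomplete in two places. First, the step ``a further rescaling $u_j\mapsto u_j/k$ \dots\ collapses the inner sum to a universal, $\psi$-independent constant'' is not right as written: after extracting $G(\psi^k\eta^{\binom{k}{2}})$, the remaining sum over $(u_1,\ldots,u_{k-1})$ still carries the factor $(\psi^k\eta^{\binom{k}{2}})(v^{-1})\,\psi(u_1\cdots u_{k-1})$ with $v=1+u_1+\cdots+u_{k-1}$, which depends on $\psi$; a uniform rescaling of the $u_j$ does not remove that dependence. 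The usual repair is to recognize this remaining sum as a product of Jacobi sums and then invoke the Jacobi--Gauss relation, or to argue inductively on $k$. Second, the handling of the degenerate case $\psi^k\eta^{\binom{k}{2}}=\varepsilon$ (where the $v=0$ stratum contributes $q-1$ rather than $0$) is deferred but not addressed; this is exactly the case that produces the extra $-1$ and needs to match up with $G_0=-1$ on the right. Neither gap is fatal, but both require more than the sentence you give them.
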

We will use the Davenport-Hasse Relation at many places of our proofs for different values of $k$ and $\chi$.
\subsection{$p$-adic gamma function and Gross-Koblitz formula}
Let $\omega: \mathbb{F}_q^\times \rightarrow \mathbb{Z}_q^{\times}$ be the Teichm\"{u}ller character.
For $a\in\mathbb{F}_q^\times$, the value $\omega(a)$ is just the $(q-1)$-th root of unity in $\mathbb{Z}_q$ such that $\omega(a)\equiv a \pmod{p}$.
We note that $\omega|_{\mathbb{F}_p^{\times}}$ is the Teichm\"{u}ller character on $\mathbb{F}_p^{\times}$ with values in $\mathbb{Z}_p^{\times}$.
Also, $\widehat{\mathbb{F}_q^{\times}}=\{\omega^j: 0\leq j\leq q-2\}$.
\par We now recall the $p$-adic gamma function. For further details, see \cite{kob}.
The $p$-adic gamma function $\Gamma_p$ is defined by setting $\Gamma_p(0)=1$, and for $n \in\mathbb{Z}^+$ by
\begin{align}
\Gamma_p(n):=(-1)^n\prod_{\substack{0<j<n\\p\nmid j}}j.\notag
\end{align}
If $x, y\in\mathbb{Z}^+$ and $x\equiv y \pmod{p^k\mathbb{Z}}$, then $\Gamma_p(x)\equiv \Gamma_p(y) \pmod{p^k\mathbb{Z}}$. Therefore, the function
has a unique extension to a continuous function $\Gamma_p: \mathbb{Z}_p \rightarrow \mathbb{Z}_p^{\times}$. If $x\in \mathbb{Z}_p$ and $x\neq 0$, then
$\Gamma_p(x)$ is defined as
\begin{align}
\Gamma_p(x):=\lim_{x_n\rightarrow x}\Gamma_p(x_n),\notag
\end{align}
where $x_n$ runs through any sequence of positive integers $p$-adically approaching $x$. $\Gamma_p$ satisfies the following functional equation:
\begin{align}\label{eq26}
\Gamma_p(x)\Gamma_p(1-x)=(-1)^{x_0},
\end{align}
where $x_0\in \{1, 2, \ldots, p \}$ satisfies $x_0\equiv x\pmod{p}$.
\par We now state a product formula for the $p$-adic gamma function from \cite[Theorem 3.1]{gross}.
Let $m\in\mathbb{Z}^+$, $p\nmid m$. If $x\in \mathbb{Q}$ satisfies $0 \leq x \leq 1$ and $(q-1)x\in \mathbb{Z}$, then
\begin{align}\label{prelim-eq2}
\prod_{i=0}^{r-1}\prod_{h=0}^{m-1}\Gamma_p\left(\langle(\frac{x+h}{m})p^i\rangle\right)=\omega \left(m^{(1-x)(1-q)}\right)
\prod_{i=0}^{r-1}\Gamma_p(\langle xp^i\rangle)
\prod_{h=1}^{m-1}\Gamma_p\left(\langle\frac{hp^i}{m}\rangle\right).
\end{align}
Using \eqref{prelim-eq2}, we proved the
following lemma in \cite{BS1} generalizing Lemma 4.1 of \cite{mccarthy2}.
\begin{lemma}\emph{(\cite[Lemma 3.1]{BS1}).}\label{lemma4}
Let $p$ be a prime and $q=p^r$. For $0\leq j\leq q-2$ and $t\in \mathbb{Z^+}$ with $p\nmid t$, we have
\begin{align}
\omega(t^{tj})\prod_{i=0}^{r-1}\Gamma_p\left(\langle \frac{tp^ij}{q-1}\rangle\right)
\prod_{h=1}^{t-1}\Gamma_p\left(\langle\frac{hp^i}{t}\rangle\right)
=\prod_{i=0}^{r-1}\prod_{h=0}^{t-1}\Gamma_p\left(\langle\frac{p^ih}{t}+\frac{p^ij}{q-1}\rangle\right)\notag
\end{align}
and
\begin{align}
\omega(t^{-tj})\prod_{i=0}^{r-1}\Gamma_p\left(\langle\frac{-tp^ij}{q-1}\rangle\right)
\prod_{h=1}^{t-1}\Gamma_p\left(\langle \frac{hp^i}{t}\rangle\right)
=\prod_{i=0}^{r-1}\prod_{h=0}^{t-1}\Gamma_p\left(\langle\frac{p^i(1+h)}{t}-\frac{p^ij}{q-1}\rangle \right).\notag
\end{align}
\end{lemma}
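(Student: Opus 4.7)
The plan is to derive both identities by a single application of the product formula \eqref{prelim-eq2} in each case, with $m=t$ and a carefully chosen $x$. For the first identity I would take $x=\langle tj/(q-1)\rangle$, and for the second $x=\langle -tj/(q-1)\rangle$. In both cases $x\in[0,1)$ and $(q-1)x\in\mathbb{Z}$, so the hypotheses of \eqref{prelim-eq2} are met.

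For the first identity, I would show that the right-hand side of \eqref{prelim-eq2} matches the left-hand side of the target, and the left-hand side of \eqref{prelim-eq2} matches the right-hand side of the target. Writing $x(q-1)=tj-(q-1)\lfloor tj/(q-1)\rfloor$, one computes $(1-x)(1-q)=tj-(q-1)(1+\lfloor tj/(q-1)\rfloor)\equiv tj\pmod{q-1}$, so $\omega(t^{(1-x)(1-q)})=\omega(t^{tj})$ because $\omega(t)$ is a $(q-1)$-th root of unity. Next, since $p^i\in\mathbb{Z}$ and $\langle\cdot\rangle$ is invariant under integer shifts, $\langle xp^i\rangle=\langle\langle tj/(q-1)\rangle p^i\rangle=\langle tp^ij/(q-1)\rangle$. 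Combining, the right-hand side of \eqref{prelim-eq2} is exactly the left-hand side of the first identity.

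For the double product, I would fix $i$ and compare $\prod_{h=0}^{t-1}\Gamma_p(\langle(x+h)p^i/t\rangle)$ with $\prod_{h=0}^{t-1}\Gamma_p(\langle p^ih/t+p^ij/(q-1)\rangle)$. The key technical input is that, since $p\nmid t$, multiplication by $p^i$ modulo $t$ is a bijection of $\{0,1,\ldots,t-1\}$, and more generally any affine map $h\mapsto p^i(h+c)\bmod t$ is a bijection. Using this and the fact that $\langle A+p^ih/t\rangle$ depends on $p^ih$ only modulo $t$ for any $A\in\mathbb{Q}$, each product collapses to the same $\prod_{k=0}^{t-1}\Gamma_p(\langle p^ij/(q-1)+k/t\rangle)$. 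This completes the first identity.

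The second identity goes through identically with $x=\langle -tj/(q-1)\rangle$: now $(1-x)(1-q)\equiv -tj\pmod{q-1}$ gives $\omega(t^{(1-x)(1-q)})=\omega(t^{-tj})$, and $\langle xp^i\rangle=\langle -tp^ij/(q-1)\rangle$. For the double product, the bijection $h\mapsto p^i(1+h)\bmod t$ of $\{0,\ldots,t-1\}$ reduces $\prod_h\Gamma_p(\langle(x+h)p^i/t\rangle)$ to the required $\prod_h\Gamma_p(\langle p^i(1+h)/t-p^ij/(q-1)\rangle)$. No step is a real obstacle: the proof is a careful bookkeeping of fractional parts, relying on $\omega(t)^{q-1}=1$ and the bijectivity of $h\mapsto p^i(h+c)\bmod t$ when $p\nmid t$.
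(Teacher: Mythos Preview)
Your proposal is correct and matches the approach indicated in the paper: the paper does not reprove the lemma but notes that it was proved in \cite{BS1} using the product formula \eqref{prelim-eq2}, which is precisely what you do by specializing \eqref{prelim-eq2} with $m=t$ and $x=\langle \pm tj/(q-1)\rangle$. Your bookkeeping of $(1-x)(1-q)\bmod (q-1)$ and the bijection $h\mapsto p^i(h+c)\bmod t$ (valid since $p\nmid t$) is exactly the right way to match both sides, so nothing more is needed.
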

\par
The Gross-Koblitz formula allows us to relate Gauss sums and the $p$-adic gamma function. We will apply
this formula to replace Gauss sums with $p$-adic gamma function.
Let $\pi \in \mathbb{C}_p$ be the fixed root of $x^{p-1} + p=0$ which satisfies
$\pi \equiv \zeta_p-1 \pmod{(\zeta_p-1)^2}$. The Gross-Koblitz formula is given below. Recall that $\overline{\omega}$ denotes the
character inverse of the Teichm\"{u}ller character.
\begin{theorem}\emph{(\cite[Gross-Koblitz]{gross}).}\label{thm4} For $a\in \mathbb{Z}$ and $q=p^r$,
\begin{align}
G(\overline{\omega}^a)=-\pi^{(p-1)\sum_{i=0}^{r-1}\langle\frac{ap^i}{q-1} \rangle}\prod_{i=0}^{r-1}\Gamma_p\left(\langle \frac{ap^i}{q-1} \rangle\right).\notag
\end{align}
\end{theorem}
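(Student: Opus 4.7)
The plan is to prove the identity directly by expressing the additive character $\theta$ as a $p$-adic analytic function of the Teichm\"uller lift, following Dwork's classical approach. The key ingredient is Dwork's splitting function $F(z) \in 1 + z\,\mathbb{Z}_p[[z]]$, given explicitly as $\exp(\pi(z + z^p/p))$ or a close variant, whose coefficients are $p$-integral by a nontrivial theorem of Dwork and which satisfies $F(1) = \zeta_p$ by virtue of the normalisation $\pi^{p-1} = -p$ together with $\pi \equiv \zeta_p - 1 \pmod{(\zeta_p - 1)^2}$. Using \eqref{new-eq-1} and the trace-theoretic definition of $\theta$, one derives the intertwining identity
\[
\theta(\alpha) \;=\; \prod_{i=0}^{r-1} F\!\bigl(\pi\,\omega(\alpha)^{p^i}\bigr) \qquad \text{for all } \alpha \in \mathbb{F}_q,
\]
which expresses $\theta$ as an explicit analytic function of the Frobenius conjugates of the Teichm\"uller lift.

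With this in hand, I would substitute into the definition $G(\overline{\omega}^a) = \sum_{\alpha} \overline{\omega}(\alpha)^a\,\theta(\alpha)$, expand each factor of $F$ as a power series, and swap the order of summation. The orthogonality relation of Lemma \ref{lemma2} forces the surviving contributions to come from $r$-tuples $(n_0, \ldots, n_{r-1})$ of non-negative integers satisfying $\sum_i n_i p^i \equiv a \pmod{q-1}$. A $\pi$-adic valuation argument then identifies the leading exponent as $(p-1)\sum_{i=0}^{r-1}\langle a p^i/(q-1)\rangle$, matching the target power of $\pi$ in the statement.

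The main obstacle will be the final identification: showing that after summing all higher-order contributions, the resulting $\pi$-adic series collapses to $-\prod_{i=0}^{r-1}\Gamma_p(\langle ap^i/(q-1)\rangle)$. For this step I would expand the Taylor coefficients of $F$ using the double exponential, substitute $p = -\pi^{p-1}$ to rewrite every contribution as a pure monomial in $\pi$, and recognise the resulting combinatorial sums via Morita's truncated-product definition of $\Gamma_p$. The functional equation \eqref{eq26}, the product formula \eqref{prelim-eq2} that already appears in the paper, and the normalisation $G_0 = -1$ from \eqref{new-eq-3} (which accounts for the overall sign $-1$) are all expected to enter at this stage. This analytic/combinatorial identification is the substantive content of the theorem; the preceding steps amount to formal bookkeeping once Dwork's splitting function and its $p$-integrality are in place.
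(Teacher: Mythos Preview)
The paper does not prove this statement at all: Theorem~\ref{thm4} is quoted verbatim from Gross and Koblitz \cite{gross} and used as a black box throughout Section~3. There is therefore no ``paper's own proof'' to compare against.

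Your outline is a reasonable sketch of the standard Dwork-theoretic proof that Gross and Koblitz themselves give, and the main steps (Dwork's splitting function, the analytic expression for $\theta$, orthogonality, and the $\pi$-adic valuation analysis) are correctly identified. The one place where your plan is vague is exactly the place where the real work lies: the ``final identification'' of the combinatorial sum with $\prod_i \Gamma_p(\langle ap^i/(q-1)\rangle)$. This step is not a matter of recognising Morita's truncated product after some bookkeeping; it requires the Dieudonn\'e--Dwork lemma (or an equivalent congruence argument) to control the coefficients of the Artin--Hasse exponential and then a genuinely nontrivial $p$-adic limit computation. Invoking \eqref{prelim-eq2} here is circular, since that product formula is itself a consequence of Gross--Koblitz. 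If you intend to supply an actual proof rather than cite \cite{gross}, you would need to fill in this identification carefully; otherwise, simply citing the result as the paper does is the appropriate course.
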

\section{Proof of the results}
\begin{lemma}\label{lemma5}
Let $p$ be an odd prime and $q=p^r$. Let $d\geq 4$ be  even and $p\nmid d(d-1)$. For $1\leq m\leq q-2$ such that $m\neq \frac{q-1}{2}$, $0\leq i\leq r-1$ we have
\begin{align}\label{eq-51}
&\lfloor\frac{-2mp^i}{q-1}\rfloor+\lfloor\frac{mdp^i}{q-1}\rfloor+
\lfloor\frac{-m(d-1)p^i}{q-1}\rfloor-\lfloor\frac{-mp^i}{q-1}\rfloor+1\notag\\
&=\sum_{h=1}^{d-2}\lfloor\langle\frac{hp^i}{d-1}\rangle-\frac{mp^i}{q-1}\rfloor
+\sum_{\substack{h=1\\h\neq\frac{d}{2}}}^{d-1}\lfloor\langle \frac{-hp^i}{d}\rangle+\frac{mp^i}{q-1}\rfloor.
\end{align}
\end{lemma}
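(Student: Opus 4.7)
The plan is to reduce both sides of the identity to a short equation in the single real number $u := mp^i/(q-1)$, and then verify that equation using Hermite's identity $\sum_{h=0}^{k-1}\lfloor x + h/k \rfloor = \lfloor kx \rfloor$ together with the hypothesis $m\neq (q-1)/2$. Rewriting the LHS in terms of $u$ gives $\lfloor -2u\rfloor + \lfloor du\rfloor + \lfloor -(d-1)u\rfloor - \lfloor -u\rfloor + 1$, so the real work is on the RHS.

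First I would reindex both sums on the RHS. Because $p\nmid(d-1)$ and $p\nmid d$, multiplication by $p^i$ permutes the nonzero residues modulo $d-1$ and modulo $d$ respectively. Hence, as $h$ runs over $\{1,\ldots,d-2\}$, $\langle hp^i/(d-1)\rangle$ ranges over $\{1/(d-1),\ldots,(d-2)/(d-1)\}$; and as $h$ runs over $\{1,\ldots,d-1\}$, $\langle -hp^i/d\rangle$ ranges over $\{1/d,\ldots,(d-1)/d\}$. The one subtle point is identifying which $h$ produces the excluded value $1/2=(d/2)/d$ in the second sum: since $p$ is odd, $p^i$ is odd, so $(d/2)p^i \equiv d/2 \pmod d$, and $h=d/2$ is the unique index in range for which $\langle -hp^i/d\rangle = 1/2$. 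Thus deleting $h=d/2$ corresponds precisely to removing a single term of value $\lfloor u+1/2\rfloor$ from an otherwise complete sum over $h=1,\ldots,d-1$.

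Next I would apply Hermite's identity to each now-complete sum, obtaining
\[
\sum_{h=1}^{d-2}\lfloor h/(d-1) - u\rfloor = \lfloor -(d-1)u\rfloor - \lfloor -u\rfloor, \qquad \sum_{h=1}^{d-1}\lfloor h/d + u\rfloor = \lfloor du\rfloor - \lfloor u\rfloor.
\]
Substituting these into the reindexed RHS and cancelling $\lfloor du\rfloor + \lfloor -(d-1)u\rfloor - \lfloor -u\rfloor$ against the corresponding terms on the LHS, the desired identity collapses to
\[
\lfloor -2u\rfloor + 1 = -\lfloor u\rfloor - \lfloor u+1/2\rfloor.
\]
A second application of Hermite (with $k=2$) rewrites the right-hand side as $-\lfloor 2u\rfloor$, so we need $\lfloor 2u\rfloor + \lfloor -2u\rfloor = -1$. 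This is the standard fact that $\lfloor x\rfloor + \lfloor -x\rfloor$ equals $0$ when $x\in\mathbb{Z}$ and $-1$ otherwise, so the claim reduces to verifying $2u\notin\mathbb{Z}$.

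Finally, $2u\in\mathbb{Z}$ would force $(q-1)\mid 2mp^i$; since $\gcd(p,q-1)=1$, this requires $(q-1)\mid 2m$, and with $1\le m\le q-2$ the only possibility is $m=(q-1)/2$, which is excluded by hypothesis. The main (mild) obstacle is the parity identification picking out $h=d/2$ as the unique index giving the value $1/2$ in the second sum; once that is pinned down the argument is a mechanical double application of Hermite's identity, and no input beyond elementary floor-function manipulations is required.
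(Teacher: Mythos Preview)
Your argument is correct. The paper's own proof proceeds by brute-force case analysis: it writes $\lfloor md(d-1)p^i/(q-1)\rfloor = d(d-1)u + v$ with $0\le v < d(d-1)$ and then checks each residue class $v$ in turn (with a further sub-case split according to whether the quotient hits the boundary exactly), computing every floor on both sides explicitly. Your route is genuinely different and considerably cleaner: the reindexing observation (that multiplication by $p^i$ permutes the nonzero residues modulo $d-1$ and modulo $d$, and that the excluded index $h=d/2$ is exactly the one contributing $\lfloor u + 1/2\rfloor$ because $p^i$ is odd) lets you apply Hermite's identity twice and collapse the whole statement to $\lfloor 2u\rfloor + \lfloor -2u\rfloor = -1$, which is immediate from $2u\notin\mathbb{Z}$. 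The payoff is that your proof is uniform in $d$ and requires no case enumeration, whereas the paper's method in principle involves on the order of $d(d-1)$ separate verifications; your approach would also adapt with essentially no change to the companion Lemmas~\ref{lemma9} and~\ref{lemma6}.
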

\begin{proof}
We express $\lfloor\frac{md(d-1)p^i}{q-1}\rfloor$ as $d(d-1)u+v$ for some $u, v\in\mathbb{Z}$, where $0\leq v<d(d-1)$.
By considering the cases $v=0,1,\ldots, d(d-1)-1$ separately, we will verify \eqref{eq-51}. For $v=0$ we have
$\lfloor\frac{md(d-1)p^i}{q-1}\rfloor=d(d-1)u$. Since $1\leq m\leq q-2$, we observe that $\frac{md(d-1)p^i}{q-1}\neq d(d-1)u$. This yields  
\begin{align}\label{eq-101}
d(d-1)u<\frac{md(d-1)p^i}{q-1}<d(d-1)u+1.
\end{align}
Using \eqref{eq-101} we deduce that $\lfloor\frac{-2mp^i}{q-1}\rfloor=-2u-1$, $\lfloor\frac{mdp^i}{q-1}\rfloor=du$, 
$\lfloor\frac{-m(d-1)p^i}{q-1}\rfloor=-(d-1)u$ and $\lfloor\frac{-mp^i}{q-1}\rfloor=-u-1$. 
Substituting all these values we find that the left hand side of \eqref{eq-51} is equal to zero. Again by \eqref{eq-101} we have 
\begin{align}\label{eq-102}
u<\frac{mp^i}{q-1}<u+\frac{1}{d(d-1)}
\end{align}
and 
\begin{align}\label{eq-103}
-u-\frac{1}{d(d-1)}<\frac{-mp^i}{q-1}<-u.
\end{align}
Since $p\nmid d$ we have
\begin{align}\label{eq-104}
\sum_{\substack{h=1\\h\neq\frac{d}{2}}}^{d-1}\lfloor\langle \frac{-hp^i}{d}\rangle+\frac{mp^i}{q-1}\rfloor=
\sum_{\substack{h=1\\h\neq\frac{d}{2}}}^{d-1}\lfloor\langle \frac{h}{d}\rangle+\frac{mp^i}{q-1}\rfloor,
\end{align}
and then \eqref{eq-102} yields 
\begin{align}\label{eq-105}
\sum_{\substack{h=1\\h\neq\frac{d}{2}}}^{d-1}\lfloor\langle \frac{h}{d}\rangle+\frac{mp^i}{q-1}\rfloor=(d-2)u.
\end{align}
Again, $p\nmid (d-1)$ and hence 
\begin{align}\label{eq-106}
\sum_{h=1}^{d-2}\lfloor\langle\frac{hp^i}{d-1}\rangle-\frac{mp^i}{q-1}\rfloor=
\sum_{h=1}^{d-2}\lfloor\langle\frac{h}{d-1}\rangle-\frac{mp^i}{q-1}\rfloor.
\end{align}
Now \eqref{eq-103} yields
\begin{align}\label{eq107}
\sum_{h=1}^{d-2}\lfloor\langle\frac{h}{d-1}\rangle-\frac{mp^i}{q-1}\rfloor=-(d-2)u.
\end{align}
Comparing \eqref{eq-104}, \eqref{eq-105}, \eqref{eq-106} and \eqref{eq107}, and substituting these values we prove that the right hand side of 
\eqref{eq-51} is equal to zero. This completes the proof when $v=0$. Now for $v=1$ we have the following cases: \\
Case 1: $\frac{md(d-1)p^i}{q-1}=d(d-1)u+1$. \\
Case 2: $d(d-1)u+1<\frac{md(d-1)p^i}{q-1}<d(d-1)u+2$.\\
In both the cases using similar steps as in the $v=0$ case we find that both the sides of \eqref{eq-51} are equal to zero.
Similarly we can check \eqref{eq-51} for other values of $v$. This completes the proof of the lemma.
\end{proof}
\begin{lemma}\label{lemma9}
Let $p$ be an odd prime and $q=p^r$. Let $d\geq 3$ be odd and $p\nmid d(d-1)$. For $1\leq m\leq q-2$ and $0\leq i\leq r-1$ we have
\begin{align}
&\lfloor\frac{-2mp^i}{q-1}\rfloor+\lfloor\frac{mdp^i}{q-1}\rfloor+
\lfloor\frac{-m(d-1)p^i}{q-1}\rfloor-\lfloor\frac{-mp^i}{q-1}\rfloor+1\notag\\
&=\sum_{h=1}^{d-2}\lfloor\langle\frac{hp^i}{d-1}\rangle-\frac{mp^i}{q-1}\rfloor
+\lfloor\langle\frac{p^i}{2}\rangle-\frac{mp^i}{q-1}\rfloor+\sum_{h=1}^{d-1}\lfloor\langle \frac{-hp^i}{d}\rangle+\frac{mp^i}{q-1}\rfloor.\notag
\end{align}
\end{lemma}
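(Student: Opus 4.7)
The plan is to reduce both sides to a clean identity in a single variable by using Hermite's identity, rather than the exhaustive case analysis on $v = \lfloor m d(d-1) p^i/(q-1)\rfloor \bmod d(d-1)$ that was used for Lemma \ref{lemma5}. Set $x := mp^i/(q-1)$, so every floor in the statement depends only on $x$ together with some fixed rationals. First I would simplify the three pieces of the right-hand side: since $p\nmid d-1$, multiplication by $p^i$ is a bijection on $(\mathbb{Z}/(d-1)\mathbb{Z})^\times$, so $\{\langle hp^i/(d-1)\rangle : 1\le h\le d-2\} = \{k/(d-1) : 1\le k\le d-2\}$; similarly, since $p\nmid d$, the map $h\mapsto (-hp^i \bmod d)$ is a bijection on $\{1,\ldots,d-1\}$. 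Finally, $p$ is odd forces $\langle p^i/2\rangle = 1/2$. Thus the right-hand side equals
\begin{equation*}
\sum_{k=1}^{d-2}\Bigl\lfloor \tfrac{k}{d-1} - x\Bigr\rfloor \;+\; \Bigl\lfloor \tfrac{1}{2} - x\Bigr\rfloor \;+\; \sum_{k=1}^{d-1}\Bigl\lfloor \tfrac{k}{d} + x\Bigr\rfloor.
\end{equation*}

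Next I would apply Hermite's identity $\lfloor Ny\rfloor = \sum_{h=0}^{N-1}\lfloor y + h/N\rfloor$ three times: with $(N,y)=(d-1,-x)$ to get $\sum_{k=1}^{d-2}\lfloor k/(d-1) - x\rfloor = \lfloor -(d-1)x\rfloor - \lfloor -x\rfloor$; with $(N,y)=(d,x)$ to get $\sum_{k=1}^{d-1}\lfloor k/d + x\rfloor = \lfloor dx\rfloor - \lfloor x\rfloor$; and with $(N,y)=(2,-x)$ to get $\lfloor 1/2 - x\rfloor = \lfloor -2x\rfloor - \lfloor -x\rfloor$. Adding these three expressions shows the right-hand side equals $\lfloor -2x\rfloor + \lfloor dx\rfloor + \lfloor -(d-1)x\rfloor - 2\lfloor -x\rfloor - \lfloor x\rfloor$, so
\begin{equation*}
\text{LHS} - \text{RHS} \;=\; \lfloor x\rfloor + \lfloor -x\rfloor + 1.
\end{equation*}

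To finish, I would note that this difference vanishes whenever $x\notin\mathbb{Z}$, since then $\lfloor x\rfloor + \lfloor -x\rfloor = -1$. The condition $x = mp^i/(q-1)\notin\mathbb{Z}$ is automatic under the hypotheses: because $p\mid q$, we have $\gcd(p^i,q-1)=1$, so $(q-1)\mid mp^i$ would force $(q-1)\mid m$, which is ruled out by $1\le m\le q-2$. Observe that, in contrast to Lemma \ref{lemma5}, no exclusion of $m=(q-1)/2$ is needed: in the odd-$d$ setting there is no integer $d/2$ to omit from the outer sum, and the new middle term $\lfloor\langle p^i/2\rangle - mp^i/(q-1)\rfloor$ is precisely what Hermite's identity with $N=2$ supplies.

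The main obstacle is essentially bookkeeping: verifying that the two re-indexing bijections truly map the indicated index sets onto $\{1,\ldots,d-2\}$ and $\{1,\ldots,d-1\}$ (so that no $k=0$ term is accidentally introduced or dropped), and confirming non-integrality of $x$ uniformly in the allowed range of $m$. Once these points are checked, the three applications of Hermite's identity collapse the identity to the trivial statement $\lfloor x\rfloor + \lfloor -x\rfloor = -1$, bypassing the lengthy case-by-case verification employed for the even-$d$ analogue.
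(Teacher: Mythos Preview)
Your argument is correct, and it is a genuinely different---and considerably cleaner---route than the one the paper takes. The paper's proof of this lemma simply reads ``The proof is similar to that of Lemma~\ref{lemma5}'', i.e., one writes $\lfloor md(d-1)p^i/(q-1)\rfloor = d(d-1)u+v$ and then checks the identity by hand for each residue $v\in\{0,1,\ldots,d(d-1)-1\}$. Your approach instead packages the right-hand side using two re-indexing bijections (valid because $p\nmid d(d-1)$) and three applications of Hermite's identity $\lfloor Ny\rfloor=\sum_{h=0}^{N-1}\lfloor y+h/N\rfloor$, reducing the whole lemma to the single-variable fact $\lfloor x\rfloor+\lfloor-x\rfloor=-1$ for $x=mp^i/(q-1)\notin\mathbb{Z}$. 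The bookkeeping you flag is routine: the maps $h\mapsto hp^i\bmod(d-1)$ and $h\mapsto -hp^i\bmod d$ are bijections of $\mathbb{Z}/(d-1)\mathbb{Z}$ and $\mathbb{Z}/d\mathbb{Z}$ fixing $0$, hence permute the nonzero residues; and $\gcd(p^i,q-1)=1$ with $1\le m\le q-2$ forces $x\notin\mathbb{Z}$. What your method buys is a uniform, parameter-free proof that avoids the $d(d-1)$-case verification entirely; the paper's case analysis, while elementary, scales poorly with $d$ and obscures why the identity holds. Your method would equally streamline the proof of Lemma~\ref{lemma5} itself (the omitted index $h=d/2$ there corresponds to removing a $\lfloor 1/2+x\rfloor$ term, which a fourth Hermite application with $N=2$, $y=x$ handles), so it is in fact more general.
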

\begin{proof}
 The proof is similar to that of Lemma \ref{lemma5}.
\end{proof}
\begin{lemma}\label{lemma6}
Let $p$ be an odd prime and $q=p^r$. Let $d\geq 3$ be odd and $p\nmid d(d-1)$. For $0\leq m\leq q-2$ such that $m\neq \frac{q-1}{2}$, $0\leq i\leq r-1$ we have
\begin{align}
&\lfloor\frac{-2mp^i}{q-1}\rfloor+\lfloor\frac{2mdp^i}{q-1}\rfloor+
\lfloor\frac{-2m(d-1)p^i}{q-1}\rfloor-\lfloor\frac{-mp^i}{q-1}\rfloor-\lfloor\frac{mdp^i}{q-1}\rfloor-
\lfloor\frac{-m(d-1)p^i}{q-1}\rfloor\notag\\
&=\sum_{\substack{h=1\\h~ odd}}^{2d-3}\lfloor\langle\frac{hp^i}{2(d-1)}\rangle-\frac{mp^i}{q-1}\rfloor
+\sum_{\substack{h=1\\h~ odd\\h\neq d}}^{2d-1}\lfloor\langle\frac{-hp^i}{2d}\rangle+\frac{mp^i}{q-1}\rfloor.\notag
\end{align}
\end{lemma}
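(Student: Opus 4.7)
My plan is to mirror the strategy of Lemma 3.1 but replace the brute case split on $v\in\{0,\ldots,2d(d-1)-1\}$ by a manipulation using Hermite's duplication identity $\lfloor 2y\rfloor=\lfloor y\rfloor+\lfloor y+1/2\rfloor$, which exploits the doubled structure $2d$, $2(d-1)$ appearing here. Throughout I write $x=mp^i/(q-1)$.

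First I simplify the right-hand side. Since $p\nmid d(d-1)$ and $p$ is odd, $p^i$ is coprime to both $2(d-1)$ and $2d$, so multiplication by $p^i$ is a parity-preserving bijection on residues modulo each of these moduli; moreover $dp^i\equiv d\pmod{2d}$ because $p^i$ is odd, so $d$ is a fixed point. The RHS sums are therefore invariant under replacing $hp^i$ by $h$ in the indexing. Using $\langle -h/(2d)\rangle=1-h/(2d)$ for odd $h\in\{1,\ldots,2d-1\}$ and absorbing the resulting constant gives
\begin{equation*}
\text{RHS} = (d-1) + \sum_{\substack{h\text{ odd}\\1\le h\le 2d-3}}\lfloor h/(2(d-1)) - x\rfloor + \sum_{\substack{h\text{ odd},\, h\ne d\\1\le h\le 2d-1}}\lfloor x - h/(2d)\rfloor.
\end{equation*}

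Next I attack the left-hand side. The duplication identity applied to $y=-x$, $y=dx$, $y=-(d-1)x$ cancels the three negative terms and leaves $\lfloor -x+1/2\rfloor+\lfloor dx+1/2\rfloor+\lfloor -(d-1)x+1/2\rfloor$. Expanding the last two via the multiplicative identity $\lfloor n\alpha\rfloor=\sum_{k=0}^{n-1}\lfloor\alpha+k/n\rfloor$ (with $\alpha=x+1/(2d)$ and $\alpha=-x+1/(2(d-1))$) and re-indexing by $h=2k+1$ produces
\begin{equation*}
\text{LHS} = \lfloor -x+1/2\rfloor + \sum_{\substack{h\text{ odd}\\1\le h\le 2d-1}}\lfloor x + h/(2d)\rfloor + \sum_{\substack{h\text{ odd}\\1\le h\le 2d-3}}\lfloor -x + h/(2(d-1))\rfloor.
\end{equation*}

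The $-x+h/(2(d-1))$ sums are common to both sides. Extracting the $h=d$ term from the LHS (which equals $\lfloor x+1/2\rfloor$) reduces the lemma to
\begin{equation*}
\lfloor -x+1/2\rfloor + \lfloor x+1/2\rfloor + \sum_{\substack{h\text{ odd},\, h\ne d\\1\le h\le 2d-1}}\bigl[\lfloor x + h/(2d)\rfloor - \lfloor x - h/(2d)\rfloor\bigr] = d-1.
\end{equation*}
Pairing $h$ with $2d-h$ (which lies in the same index set and is distinct from $h$ since $d$ is excluded), each of the $(d-1)/2$ pairs contributes exactly $2$ via the elementary identities $\lfloor x+1-h/(2d)\rfloor=1+\lfloor x-h/(2d)\rfloor$ and $\lfloor x-1+h/(2d)\rfloor=-1+\lfloor x+h/(2d)\rfloor$, so the pair-sum equals $d-1$. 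It then remains to verify $\lfloor -x+1/2\rfloor+\lfloor x+1/2\rfloor=0$, which holds precisely when $\{x\}\neq 1/2$. I expect this reflection step to be the main obstacle: it is the sole place the hypothesis $m\ne(q-1)/2$ enters, and I would justify it via a short divisibility argument using $\gcd(p^i,q-1)=1$ (which holds since $q=p^r$) to show that any $m\in\{0,\ldots,q-2\}$ with $\{mp^i/(q-1)\}=1/2$ must satisfy $2m=(q-1)\ell$ for an odd $\ell\ge 1$, forcing $\ell=1$ and $m=(q-1)/2$ independently of $i$.
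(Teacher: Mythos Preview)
Your argument is correct and takes a genuinely different route from the paper. The paper's own proof is the brute-force case split you deliberately avoid: it writes $\lfloor 2md(d-1)p^i/(q-1)\rfloor = 2d(d-1)u + v$ and checks the identity for each residue $v\in\{0,1,\ldots,2d(d-1)-1\}$, exactly parallel to the proof of Lemma~\ref{lemma5}. Your approach instead exploits the ``doubled'' structure of the statement: Hermite's duplication $\lfloor 2y\rfloor=\lfloor y\rfloor+\lfloor y+\tfrac12\rfloor$ immediately cancels the three subtracted floors on the left, and the general Hermite identity $\lfloor n\alpha\rfloor=\sum_{k=0}^{n-1}\lfloor\alpha+k/n\rfloor$ then unfolds $\lfloor dx+\tfrac12\rfloor$ and $\lfloor -(d-1)x+\tfrac12\rfloor$ into exactly the odd-indexed sums appearing on the right. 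After that the lemma collapses to the pairing $h\leftrightarrow 2d-h$, which is a clean two-line computation, together with the reflection fact $\lfloor -x+\tfrac12\rfloor+\lfloor x+\tfrac12\rfloor=0$ for $\{x\}\ne\tfrac12$; your justification that $\{mp^i/(q-1)\}=\tfrac12$ forces $m=(q-1)/2$ via $\gcd(p^i,q-1)=1$ is correct and pinpoints precisely where the excluded value enters. What the paper's method buys is uniformity (the same template handles Lemmas~\ref{lemma5}, \ref{lemma9}, and \ref{lemma6} without new ideas); what yours buys is that the proof is independent of $d$, avoids an implicit $O(d^2)$-case verification, and makes transparent why the hypothesis $m\ne(q-1)/2$ is needed.
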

\begin{proof}
The proof is similar to that of the previous lemma. We express $\lfloor\frac{2md(d-1)p^i}{q-1}\rfloor$ as $2d(d-1)u+v$
for some $u,v\in\mathbb{Z}$, where $0\leq v<2d(d-1)$.
By considering the cases $v=0,1,\ldots, 2d(d-1)-1$ separately, we can easily verify the lemma.
\end{proof}
\begin{lemma}\label{lemma8}
For $0< m\leq q-2$ we have
\begin{align}\label{eq-61}
\prod_{i=0}^{r-1}\Gamma_p(\langle(1-\frac{m}{q-1})p^i\rangle)\Gamma_p(\langle\frac{mp^i}{q-1}\rangle)=(-1)^r\overline{\omega}^m(-1).
\end{align}
For $0\leq m\leq q-2$ such that $m\neq\frac{q-1}{2}$ we have
\begin{align}\label{eq-62}
\prod_{i=0}^{r-1}\frac{\Gamma_p(\langle(\frac{1}{2}-\frac{m}{q-1})p^i\rangle)
\Gamma_p(\langle(\frac{1}{2}+\frac{m}{q-1})p^i\rangle)}
{\Gamma_p(\langle\frac{p^i}{2}\rangle)\Gamma_p(\langle\frac{p^i}{2}\rangle)}=\overline{\omega}^m(-1).
\end{align}
\end{lemma}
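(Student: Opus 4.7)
Both identities have the form ``product of $p$-adic Gamma values $=$ a root of unity,'' and the cleanest route I see is to convert each factor to a Gauss sum via the Gross-Koblitz formula (Theorem \ref{thm4}), collapse pairs $G(\chi)G(\chi^{-1})$ using Lemma \ref{fusi3}, and track the resulting powers of $\pi$ through $\pi^{p-1} = -p$. The hypothesis $p \nmid (q-1)$ together with $0 < m \le q-2$ (resp.\ $m \neq (q-1)/2$) is exactly what is needed to keep the characters involved nontrivial.

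For \eqref{eq-61}, I would first observe that $(1-\tfrac{m}{q-1})p^{i} = p^{i} - \tfrac{mp^{i}}{q-1}$, so $\langle (1-\tfrac{m}{q-1})p^{i}\rangle = \langle -\tfrac{mp^{i}}{q-1}\rangle$, which equals $\langle \tfrac{(q-1-m)p^{i}}{q-1}\rangle$; and since $\gcd(p,q-1)=1$ and $0<m\le q-2$, none of the arguments is an integer, so $\langle \tfrac{mp^{i}}{q-1}\rangle + \langle \tfrac{(q-1-m)p^{i}}{q-1}\rangle = 1$ for each $i$. Applying Gross-Koblitz to $a=m$ and $a=q-1-m$ (noting $\overline{\omega}^{\,q-1-m} = \omega^{m}$), the total $\pi$-exponent is $-(p-1)r$, which contributes $(-p)^{-r} = (-1)^{r}q^{-1}$. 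The product of the two Gauss sums $G(\overline{\omega}^{m})G(\omega^{m})$ equals $q\,\overline{\omega}^{m}(-1)$ by Lemma \ref{fusi3}. Multiplying, the $q$'s cancel and leave $(-1)^{r}\overline{\omega}^{m}(-1)$, as required.

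For \eqref{eq-62}, the same strategy applies with $\tfrac{1}{2}\pm\tfrac{m}{q-1}$ corresponding to the characters $\overline{\omega}^{(q-1)/2 \pm m} = \phi\,\overline{\omega}^{\pm m}$. The condition $m\neq (q-1)/2$ guarantees $\phi\,\overline{\omega}^{m}\neq \varepsilon$, so Lemma \ref{fusi3} applies and gives $G(\phi\,\overline{\omega}^{m})G(\phi\,\omega^{m}) = q\,\phi(-1)\overline{\omega}^{m}(-1)$. A short case check on the fractional parts $\langle \tfrac{p^{i}}{2}\pm \tfrac{mp^{i}}{q-1}\rangle$ (using the same $\gcd$ argument to rule out the value $\tfrac12$) shows that their sum telescopes to $r$, so again the $\pi$-exponent in the numerator is $-(p-1)r$. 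For the denominator, Gross-Koblitz applied to $a=(q-1)/2$ yields $\prod_{i}\Gamma_{p}(\langle p^{i}/2\rangle)^{2} = \pi^{-(p-1)r}G(\phi)^{2} = \pi^{-(p-1)r}\,q\,\phi(-1)$ via Lemma \ref{fusi3}. Forming the ratio, the factors $(-1)^{r}q^{-1}$, $q$, and $\phi(-1)$ all cancel, leaving $\overline{\omega}^{m}(-1)$.

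The only place where genuine care is needed is the bookkeeping of the fractional-part sums $\sum_{i}\langle ap^{i}/(q-1)\rangle$ and of the signs from Gross-Koblitz (each application of the formula contributes a factor $-1$, which squares to $+1$ here). I expect the main obstacle to be verifying that $\langle \tfrac{p^{i}}{2}+\tfrac{mp^{i}}{q-1}\rangle + \langle \tfrac{p^{i}}{2}-\tfrac{mp^{i}}{q-1}\rangle = 1$ for every $i$ in \eqref{eq-62}; this boils down to ruling out the exceptional case where $\langle mp^{i}/(q-1)\rangle = 1/2$, which, via $\gcd(p,q-1)=1$, forces $2m \equiv 0 \pmod{q-1}$ and hence $m \in \{0,(q-1)/2\}$ — exactly the values the hypotheses handle cleanly (for $m=0$ the fractional parts are both $1/2$, again summing to $1$).
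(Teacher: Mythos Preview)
Your argument is correct. For \eqref{eq-61} you do exactly what the paper does: rewrite $\langle(1-\tfrac{m}{q-1})p^i\rangle=\langle\tfrac{-mp^i}{q-1}\rangle$, apply Gross--Koblitz to $a=m$ and $a=-m$, note the fractional parts sum to $1$ in each factor so the $\pi$-power is $\pi^{-(p-1)r}=(-p)^{-r}$, and collapse $G(\overline{\omega}^m)G(\overline{\omega}^{-m})=q\,\overline{\omega}^m(-1)$ via Lemma~\ref{fusi3}. For \eqref{eq-62}, however, you take a genuinely different route from the paper. The paper multiplies and divides $J_m$ by $\prod_i\Gamma_p(\langle(1-\tfrac{m}{q-1})p^i\rangle)\Gamma_p(\langle\tfrac{mp^i}{q-1}\rangle)$, then invokes the $\Gamma_p$ multiplication formula (Lemma~\ref{lemma4}) to rewrite the numerator as $\prod_i\Gamma_p(\langle\tfrac{-2mp^i}{q-1}\rangle)\Gamma_p(\langle\tfrac{2mp^i}{q-1}\rangle)$, and finishes by applying \eqref{eq-61} to the denominator and Gross--Koblitz/Lemma~\ref{fusi3} with $a=\pm 2m$ to the numerator. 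You instead observe directly that $\tfrac12\pm\tfrac{m}{q-1}=\tfrac{(q-1)/2\pm m}{q-1}$, apply Gross--Koblitz with $a=(q-1)/2\pm m$ so that the relevant characters are $\phi\,\overline{\omega}^{\pm m}$ (mutual inverses, nontrivial exactly when $m\neq(q-1)/2$), and get $G(\phi\,\overline{\omega}^m)G(\phi\,\omega^m)=q\,\phi(-1)\,\overline{\omega}^m(-1)$; the same treatment of the denominator with $a=(q-1)/2$ gives $G(\phi)^2=q\,\phi(-1)$, and everything cancels. Your approach is shorter and self-contained in that it avoids Lemma~\ref{lemma4} altogether; the paper's approach has the minor structural advantage of reusing \eqref{eq-61} and the general multiplication formula, which fits the broader pattern of the paper where Lemma~\ref{lemma4} is used repeatedly.
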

\begin{proof}
Let $I_m=\prod_{i=0}^{r-1}\Gamma_p(\langle(1-\frac{m}{q-1})p^i\rangle)\Gamma_p(\langle\frac{mp^i}{q-1}\rangle)$.
Now by Lemma \ref{lemma4} we have
\begin{align}
I_m&=\prod_{i=0}^{r-1}\Gamma_p(\langle(1-\frac{m}{q-1})p^i\rangle)\Gamma_p(\langle\frac{mp^i}{q-1}\rangle)\notag\\
&=\prod_{i=0}^{r-1}\Gamma_p(\langle\frac{-mp^i}{q-1}\rangle)\Gamma_p(\langle\frac{mp^i}{q-1}\rangle\notag\\
&=\frac{\pi^{(p-1)\sum_{i=0}^{r-1}\langle\frac{-mp^i}{q-1}\rangle}\prod_{i=0}^{r-1}\Gamma_p\left(\langle\frac{-mp^i}{q-1}
\rangle\right)
\pi^{(p-1)\sum_{i=0}^{r-1}\langle\frac{mp^i}{q-1}\rangle}\prod_{i=0}^{r-1}\Gamma_p\left(\langle\frac{mp^i}{q-1}\rangle\right)}
{\pi^{(p-1)\sum_{i=0}^{r-1}\{\langle\frac{-mp^i}{q-1}\rangle+\langle\frac{mp^i}{q-1}\rangle\}}}.\notag
\end{align}
Using Gross-Koblitz formula (Theorem \ref{thm4}), Lemma \ref{fusi3}
and the fact that $$\langle\frac{-mp^i}{q-1}\rangle+\langle\frac{mp^i}{q-1}\rangle=1,$$
we obtain
\begin{align}
I_m&=\frac{G(\overline{\omega}^{~-m})G(\overline{\omega}^{~m})}{(-p)^r}\notag\\
&=\frac{q\cdot\overline{\omega}^{m}(-1)}{(-1)^r\cdot q}\notag\\
&=(-1)^r\overline{\omega}^{m}(-1).\notag
\end{align}
This completes the proof of \eqref{eq-61}.
If $m=0$ then clearly \eqref{eq-62} is true. For $m\neq\frac{q-1}{2}$ let
$$J_m=\prod_{i=0}^{r-1}\frac{\Gamma_p(\langle(\frac{1}{2}-\frac{m}{q-1})p^i\rangle)
\Gamma_p(\langle(\frac{1}{2}+\frac{m}{q-1})p^i\rangle)}
{\Gamma_p(\langle\frac{p^i}{2}\rangle)\Gamma_p(\langle\frac{p^i}{2}\rangle)}.$$ Now we have
\begin{align}\label{eq-63}
J_{m}&=\prod_{i=0}^{r-1}\frac{\Gamma_p(\langle(\frac{1}{2}-\frac{m}{q-1})p^i\rangle)
\Gamma_p(\langle(\frac{1}{2}+\frac{m}{q-1})p^i\rangle)}
{\Gamma_p(\langle\frac{p^i}{2}\rangle)\Gamma_p(\langle\frac{p^i}{2}\rangle)}\notag\\
&=\prod_{i=0}^{r-1}\frac{\Gamma_p(\langle(\frac{1}{2}-\frac{m}{q-1})p^i\rangle)
\Gamma_p(\langle(1-\frac{m}{q-1})p^i\rangle)\Gamma_p(\langle\frac{mp^i}{q-1}\rangle)
\Gamma_p(\langle(\frac{1}{2}+\frac{m}{q-1})p^i\rangle)}
{\Gamma_p(\langle\frac{p^i}{2}\rangle)\Gamma_p(\langle\frac{p^i}{2}\rangle)}\notag\\
&\times \prod_{i=0}^{r-1}\frac{1}{\Gamma_p(\langle(1-\frac{m}{q-1})p^i\rangle)
\Gamma_p(\langle\frac{mp^i}{q-1}\rangle)}.
\end{align}
Applying Lemma \ref{lemma4} in \eqref{eq-63} we deduce that
\begin{align}
J_m&=\prod_{i=0}^{r-1}\frac{\Gamma_p(\langle\frac{-2mp^i}{q-1}\rangle)\Gamma_p(\langle\frac{2mp^i}{q-1}\rangle)}
{\Gamma_p(\langle(1-\frac{m}{q-1})p^i\rangle)
\Gamma_p(\langle\frac{mp^i}{q-1}\rangle)}.\notag
\end{align}
Using \eqref{eq-61}, Gross-Koblitz formula (Theorem \ref{thm4}), Lemma \ref{fusi3},
and the fact that $$\langle\frac{-2mp^i}{q-1}\rangle+\langle\frac{2mp^i}{q-1}\rangle=1,$$
we have \begin{align}
J_m&=\frac{\pi^{(p-1)\sum_{i=0}^{r-1}\langle\frac{-2mp^i}{q-1}\rangle}\prod_{i=0}^{r-1}\Gamma_p\left(\langle\frac{-2mp^i}{q-1}
\rangle\right)
\pi^{(p-1)\sum_{i=0}^{r-1}\langle\frac{2mp^i}{q-1}\rangle}\prod_{i=0}^{r-1}\Gamma_p\left(\langle\frac{2mp^i}{q-1}\rangle\right)}
{(-1)^r\overline{\omega}^m(-1)\pi^{(p-1)\sum_{i=0}^{r-1}\{\langle\frac{-2mp^i}{q-1}\rangle+\langle\frac{2mp^i}{q-1}\rangle\}}}
\notag\\
&=\frac{G(\overline{\omega}^{~-2m})G(\overline{\omega}^{~2m})}{q\overline{\omega}^m(-1)}\notag\\
&=\frac{q~\overline{\omega}^{2m}(-1)}{q~\overline{\omega}^{m}(-1)}\notag\\
&=\overline{\omega}^{m}(-1).\notag
\end{align}
This completes the proof of the lemma.
\end{proof}
\begin{lemma}\emph{(\cite[Lemma 10.4.1]{evans})}\label{lemma7}
Let $\gamma\in\mathbb{F}_q^{\times}$, and let $k$ be a positive integer. Let $\chi$ be a character on $\mathbb{F}_q$ of order
$d=\gcd(k,q-1)$. Then the number of solutions $x\in\mathbb{F}_q$ of $x^k=\gamma$ is
$$N(x^k=\gamma)=\sum_{j=0}^{d-1}\chi^{j}(\gamma).$$
\end{lemma}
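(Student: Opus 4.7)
The plan is to compute both sides separately and verify that each equals $d$ when $\gamma$ is a $d$-th power in $\mathbb{F}_q^{\times}$, and $0$ otherwise. This reduces the lemma to two elementary structural facts about the cyclic group $\mathbb{F}_q^{\times}$.

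First I would evaluate the left-hand side via the power map $\psi\colon\mathbb{F}_q^{\times}\to\mathbb{F}_q^{\times}$, $x\mapsto x^k$. Fixing a generator $g$ of $\mathbb{F}_q^{\times}$ and writing $x=g^a$, $\gamma=g^c$, the equation $x^k=\gamma$ becomes the linear congruence $ka\equiv c\pmod{q-1}$. This is solvable iff $\gcd(k,q-1)=d$ divides $c$, equivalently iff $\gamma\in\langle g^d\rangle$, the subgroup of $d$-th powers; and when solvable it has exactly $d$ solutions modulo $q-1$. Hence $N(x^k=\gamma)$ equals $d$ when $\gamma$ is a $d$-th power and $0$ otherwise.

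Next I would handle the right-hand side. Because $\chi$ has order exactly $d$ and $\mathbb{F}_q^{\times}$ is cyclic, $\ker\chi$ is the unique subgroup of index $d$, which is again the subgroup of $d$-th powers. Moreover $\chi(\gamma)^{d}=\chi^{d}(\gamma)=\varepsilon(\gamma)=1$, so $\chi(\gamma)$ is a $d$-th root of unity, equal to $1$ precisely when $\gamma$ is a $d$-th power. A direct geometric-sum evaluation then gives
\[
\sum_{j=0}^{d-1}\chi^{j}(\gamma)=
\begin{cases}
d, & \chi(\gamma)=1,\\[4pt]
\dfrac{1-\chi(\gamma)^{d}}{1-\chi(\gamma)}=0, & \chi(\gamma)\neq 1,
\end{cases}
\]
which matches the count from the previous paragraph and yields the lemma.

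There is no real obstacle here. The only subtlety worth flagging is that one must use $d=\gcd(k,q-1)$ rather than $k$ as the fibre size of $\psi$; this comes down to the fact that the $k$-th roots of unity in $\mathbb{F}_q^{\times}$ form a cyclic subgroup of order exactly $d$ (since $d\mid q-1$ and $\mathbb{F}_q^{\times}$ is cyclic). Once that is observed, the alignment between the $d$-fold fibres of $\psi$ and the $d$ characters $\chi^{j}$ used to detect the $d$-th power subgroup is immediate.
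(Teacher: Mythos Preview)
Your argument is correct and is the standard elementary proof of this fact. Note, however, that the paper does not supply a proof of this lemma at all: it is quoted verbatim from \cite[Lemma~10.4.1]{evans} and used as a black box. So there is no ``paper's own proof'' to compare against; your write-up simply fills in a reference the authors chose to cite rather than reprove.
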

To prove Theorem \ref{thm-1} and Theorem \ref{thm-2}, we will first express the number of points on certain
families of hyperelliptic curves over $\mathbb{F}_q$ in terms of the $G$-function.
For $d\geq 2$ and  $a, b \neq 0$, we consider the hyperelliptic curves $E_d$ and $E'_d$ over $\mathbb{F}_q$ given by
\begin{align}
 E_d: y^2=x^d+ax+b\notag
\end{align}
and
\begin{align}
 E'_d: y^2=x^d+ax^{d-1}+b,\notag
\end{align}
respectively. Let $N_d$ and $N_d'$ denote the number of $\mathbb{F}_q$-points on the curves $E_d$ and $E_{d}^{\prime}$,
respectively. We now give explicit expressions for $N_d$ and $N_d'$ in terms of the $G$-function in the following theorems.
\begin{theorem}\label{thm-3}
Let $d\geq4$ be even, and let $p$ be an odd prime such that $p\nmid d(d-1)$. Then
\begin{align}
N_d&=q-1-q\notag\\
&\times{_{d-2}}G_{d-2}\left[\begin{array}{ccccccc}
                                \frac{1}{d-1}, & \frac{2}{d-1}, & \ldots, & \frac{d-2}{2(d-1)}, & \frac{d}{2(d-1)},
                                & \ldots, & \frac{d-2}{d-1} \\
                                \frac{1}{d}, & \frac{2}{d}, & \ldots, & \frac{d-2}{2d}, & \frac{d+2}{2d}, & \ldots,
                                 & \frac{d-1}{d}
                              \end{array}|f(0)
\right]_q,\notag
\end{align}
where $f$ is defined as in Theorem \ref{thm-1}.
\end{theorem}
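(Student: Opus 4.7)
The plan is to compute $N_d$ directly from the defining equation $y^2 = x^d + ax + b$ by expressing it through the orthogonality of additive characters, reducing to a sum of Gauss sums indexed by a parameter $m$, and then matching the resulting expression with the definition of $_{d-2}G_{d-2}[\cdots | f(0)]_q$ via the Davenport--Hasse relation (Theorem \ref{lemma3}) and the Gross--Koblitz formula (Theorem \ref{thm4}).

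I would first write
$$N_d = \frac{1}{q}\sum_{z,x,y \in \mathbb{F}_q} \theta(z(y^2 - x^d - ax - b)),$$
separate the $z = 0$ contribution (which gives $q$), and for $z \neq 0$ replace $\sum_y \theta(zy^2)$ by $\phi(z)G(\phi)$. In the remaining $x$-sum the term $x = 0$ contributes $1$; for $x \neq 0$ I would expand both $\theta(-zx^d)$ and $\theta(-azx)$ via Lemma \ref{lemma1} and then invoke multiplicative character orthogonality, which forces $n \equiv -md \pmod{q-1}$ and collapses the double expansion to a single sum over $m$. Performing the remaining $z$-sum against $\phi(z)\theta(-bz)$ produces a Gauss sum in the character $\phi T^{-m(d-1)}$, so up to explicit constants and factors of $T^{-md}(a)$ and $T^{m(d-1)}(b)$, $N_d$ becomes a finite sum involving $G_{-m}$, $G_{md}$, $G(\phi T^{-m(d-1)})$ and $G(\phi)$, with additional constant contributions coming from $m = 0$ and from the finitely many exceptional indices where an involved character becomes trivial.

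I would then apply Davenport--Hasse three times: with $k = 2$ and $\psi = T^{-m(d-1)}$ to rewrite the pair $G(\phi T^{-m(d-1)})G(\phi)$ in terms of $G(T^{-m(d-1)})$ and $G(T^{-2m(d-1)})$; with $k = d-1$ and $\psi = T^{-m}$ to expand $G(T^{-m(d-1)})$ as a product of Gauss sums over the $d-1$ characters $\chi$ with $\chi^{d-1} = \varepsilon$; and with $k = d$ and $\psi = T^m$ to expand $G(T^{md})$ as a product over the $d$ characters $\chi$ with $\chi^d = \varepsilon$. The quadratic character $\phi$ appearing in the second expansion cancels against the separated $G(\phi)$, which accounts precisely for the omitted middle parameter at position $d/2$ in the lower row of $_{d-2}G_{d-2}$. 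Converting every remaining Gauss sum via Gross--Koblitz then yields ratios of $p$-adic gamma values at the fractional parts $\langle k/(d-1) - m/(q-1)\rangle$ for $k = 1, \ldots, d-2$ and $\langle -k/d + m/(q-1)\rangle$ for $k = 1, \ldots, d-1$ with $k \neq d/2$, while the Davenport--Hasse scaling factors $T^m((d-1)^{d-1})$ and $T^m(d^{-d})$ combine with $T^{-md}(a) T^{m(d-1)}(b)$ to produce exactly $\overline{\omega}^m(f(0))$, matching the argument in the $G$-function.

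The main obstacle is the careful bookkeeping of the exponent of $-p$ and the sign $(-1)^{m(d-2)}$ appearing in Definition \ref{defin1}: this is precisely why Lemmas \ref{lemma5} and \ref{lemma8} were established above, the first providing the floor-function identity that matches the aggregated $-p$ exponents on the Gauss-sum side with those on the $G$-function side, and the second handling the $\Gamma_p$ product at the quadratic index. A second delicate point is the identification and treatment of the finitely many exceptional indices $m$ at which an intermediate character becomes trivial: the constant contributions from these terms, together with those from $m = 0$ and $x = 0$, must combine to give exactly the leading $q - 1$ on the right-hand side.
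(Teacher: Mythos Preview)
Your overall strategy is close to the paper's, but there is a genuine gap in the step where you propose to apply Davenport--Hasse with $k=d-1$ and with $k=d$. Theorem~\ref{lemma3} requires $q\equiv 1\pmod{k}$, so those two applications would need $q\equiv 1\pmod{d-1}$ and $q\equiv 1\pmod{d}$; neither of these follows from the hypothesis $p\nmid d(d-1)$. For instance, with $d=4$, $p=q=7$ one has $p\nmid 12$ but $q\not\equiv 1\pmod 4$, so there are not $d$ characters $\chi$ with $\chi^{d}=\varepsilon$ and the expansion you describe is unavailable. This is exactly the kind of congruence restriction the $_nG_n$ formalism is designed to avoid, so a proof that reintroduces it does not establish the theorem in the stated generality.

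The paper circumvents this by applying Davenport--Hasse only once, with $k=2$ (which is always valid since $q$ is odd), and only after first shifting $m\mapsto m-\tfrac{q-1}{2}$ so that the quadratic twist sits on $G_{-m}$ rather than on $G_{-m(d-1)}$. This yields the combination $G_{-2m}G_{md}G_{-m(d-1)}/G_{-m}$. The passage from $G_{md}$ and $G_{-m(d-1)}$ to the products of $\Gamma_p$-values indexed by $h/d$ and $h/(d-1)$ is then carried out \emph{after} Gross--Koblitz, via Lemma~\ref{lemma4} (the $p$-adic multiplication formula), which requires only $p\nmid d$ and $p\nmid d-1$. It is precisely this Gauss-sum combination that Lemma~\ref{lemma5} is tailored to: note that its left-hand side involves $\lfloor\tfrac{-2mp^i}{q-1}\rfloor$, $\lfloor\tfrac{mdp^i}{q-1}\rfloor$, $\lfloor\tfrac{-m(d-1)p^i}{q-1}\rfloor$, and $\lfloor\tfrac{-mp^i}{q-1}\rfloor$, matching the four Gauss sums above, not the $G_{-2m(d-1)}$ that your version of the $k=2$ step would produce. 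So even setting aside the congruence issue, Lemma~\ref{lemma5} as stated would not plug directly into your arrangement.
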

\begin{proof} Let $E_d(x, y)=x^d+ax+b-y^2.$
Using the identity
\begin{align}
\sum_{z\in\mathbb{F}_q}\theta(zE_d(x,y))=\left\{
                                         \begin{array}{ll}
                                           q, & \hbox{if $E_d(x,y)=0$;} \\
                                           0, & \hbox{if $E_d(x,y)\neq0$,}
                                         \end{array}
                                       \right.
\end{align}
we obtain
\begin{align}\label{eq10}
q\cdot N_d&=\sum_{x,y,z\in\mathbb{F}_q}\theta(zE_d(x,y))\notag\\
&=q^2+\sum_{z\in\mathbb{F}_{q}^{\times}}\theta(zb)+\sum_{y,z\in\mathbb{F}_{q}^{\times}}\theta(bz)\theta(-zy^2)+
\sum_{x,z\in\mathbb{F}_{q}^{\times}}\theta(zx^d)\theta(zax)\theta(zb)\notag\\
&+\sum_{x,y,z\in\mathbb{F}_{q}^{\times}}\theta(x^dz)\theta(axz)\theta(bz)\theta(-zy^2)\notag\\
&=q^2+A+B+C+D.
\end{align}
From \eqref{new-eq-3}, we find that
$A=-1$. Applying Lemma \ref{lemma1} we have
\begin{align}
B&=\sum_{y,z\in\mathbb{F}_{q}^{\times}}\theta(bz)\theta(-zy^2)\notag\\
&=\frac{1}{(q-1)^2}\sum_{l,m=0}^{q-2}G_{-m}G_{-l}T^{m}(b)T^l(-1)\sum_{y\in\mathbb{F}_{q}^{\times}}T^{2l}(y)
\sum_{z\in\mathbb{F}_{q}^{\times}}T^{l+m}(z).\notag
\end{align}
We now apply Lemma \ref{lemma2} to the inner sums on the right, which gives non zero sums only if $2l=0$ and $l+m=0$.
Hence $l=0$ or $l=\frac{q-1}{2}$; and $m=0$ or $m=\frac{q-1}{2}$, respectively. Finally, Lemma \ref{fusi3} and \eqref{new-eq-3} give $B=1+q\phi(b)$. Similarly,
\begin{align}\label{eq-31}
D&=\sum_{x,y,z\in\mathbb{F}_{q}^{\times}}\theta(x^dz)\theta(axz)\theta(bz)\theta(-zy^2)\notag\\
&=\frac{1}{(q-1)^4}\sum_{l,m,n,k=0}^{q-2}G_{-m}G_{-l}G_{-n}G_{-k}T^l(a)T^n(b)T^k(-1)\notag\\
&\times\sum_{x\in\mathbb{F}_{q}^{\times}}T^{l+md}(x)\sum_{y\in\mathbb{F}_{q}^{\times}}T^{2k}(y)
\sum_{z\in\mathbb{F}_{q}^{\times}}T^{l+m+n+k}(z).
\end{align}
The inner sums are non zero only if $l+md=0$, $2k=0$, and $l+m+n+k=0$. This implies that $l=-md$, $k=0$ or $k=\frac{q-1}{2}$; and
$n=m(d-1)$ or $n= m(d-1)+\frac{q-1}{2}$, respectively. Putting these values in \eqref{eq-31} we obtain
\begin{align}\label{eq-32}
D&=\frac{1}{q-1}\sum_{m=0}^{q-2}G_{-m}G_{md}G_{-m(d-1)}G_{0}T^{-md}(a)T^{m(d-1)}(b)\notag\\
&+\frac{1}{q-1}\sum_{m=0}^{q-2}G_{-m}G_{md}G_{-m(d-1)+\frac{q-1}{2}}G_{\frac{q-1}{2}}T^{-md}(a)
T^{m(d-1)+\frac{q-1}{2}}(b)T^{\frac{q-1}{2}}(-1)\notag\\
&=\frac{-1}{q-1}\sum_{m=0}^{q-2}G_{-m}G_{md}G_{-m(d-1)}T^{-md}(a)T^{m(d-1)}(b)\notag\\
&+\frac{\phi(-b)}{q-1}\sum_{m=0}^{q-2}G_{-m}G_{md}G_{-m(d-1)+\frac{q-1}{2}}G_{\frac{q-1}{2}}T^{-md}(a)
T^{m(d-1)}(b).
\end{align}
Expanding $C$ in a similar fashion, using Lemma \ref{lemma1}, it follows that the first term of the last expression for $D$ will be equal to $-C$. Now substituting the
expressions for $A$, $B$, $C$, and $D$ in \eqref{eq10} we have
\begin{align}
q\cdot N_d&=q^2+q\phi(b)+\frac{\phi(-b)}{q-1}\sum_{m=0}^{q-2}G_{-m}G_{md}G_{-m(d-1)+\frac{q-1}{2}}G_{\frac{q-1}{2}}
T^m\left(\frac{b^{d-1}}{a^d}\right).\notag
\end{align}
Replacing $m$ by $m-\frac{q-1}{2}$ we have
\begin{align}\label{eq-33}
q\cdot N_d&=q^2+q\phi(b)+\frac{\phi(-b)}{q-1}\sum_{m=0}^{q-2}G_{-m+\frac{q-1}{2}}G_{md}G_{-m(d-1)}G_{\frac{q-1}{2}}
T^m\left(\frac{b^{d-1}}{a^d}\right)\phi(b)\notag\\
&=q^2+q\phi(b)+\frac{\phi(-1)}{q-1}\sum_{m=0}^{q-2}G_{-m+\frac{q-1}{2}}G_{md}G_{-m(d-1)}G_{\frac{q-1}{2}}
T^m\left(\frac{b^{d-1}}{a^d}\right).
\end{align}
Using Davenport-Hasse relation (Theorem \ref{lemma3}) for $k=2$ and $\psi=T^{-m}$ we deduce that
\begin{align}\label{eq-34}
G_{-m+\frac{q-1}{2}}=\frac{G_{\frac{q-1}{2}}G_{-2m}T^m(4)}{G_{-m}}.
\end{align}
Substituting \eqref{eq-34} into \eqref{eq-33} and using Lemma \ref{fusi3} yield
\begin{align}
q\cdot N_d&=q^2+q\phi(b)+\frac{\phi(-1)}{q-1}\sum_{m=0}^{q-2}\frac{G_{-2m}G_{md}G_{-m(d-1)}}{G_{-m}}G_{\frac{q-1}{2}}^2
T^{m}\left(\frac{4b^{d-1}}{a^d}\right)\notag\\
&=q^2+q\phi(b)+\frac{q}{q-1}\sum_{m=0}^{q-2}\frac{G_{-2m}G_{md}G_{-m(d-1)}}{G_{-m}}
T^{m}\left(\frac{4b^{d-1}}{a^d}\right)\notag\\
&=q^2+q\phi(b)+\frac{q}{q-1}+\frac{q\phi(b)}{q-1}\notag\\
&\hspace{1cm}+\frac{q}{q-1}\sum_{\substack{m=1\\m\neq \frac{q-1}{2}}}^{q-2}\frac{G_{-2m}G_{md}G_{-m(d-1)}}{G_{-m}}
T^{m}\left(\frac{4b^{d-1}}{a^d}\right).\notag
\end{align}
Now we take $T$ to be the inverse of the Teichm\"{u}ller character, that is, $T=\overline{\omega}$, and then using Gross-Koblitz formula 
(Theorem \ref{thm4}) we deduce that
\begin{align}
q\cdot N_d&=q^2+q\phi(b)+\frac{q}{q-1}+\frac{q\phi(b)}{q-1}+\frac{q}{q-1}\sum_{\substack{m=1\\m\neq \frac{q-1}{2}}}^{q-2}\pi^{(p-1)s}
~\overline{\omega}^m\left(\frac{4b^{d-1}}{a^d}\right)\notag\\
&\hspace{1cm}\times \prod_{i=0}^{r-1}\frac{\Gamma_p(\langle\frac{-2mp^i}{q-1}\rangle)\Gamma_p(\langle\frac{mdp^i}{q-1}\rangle)
\Gamma_p(\langle\frac{-m(d-1)p^i}{q-1}\rangle)}{\Gamma_p(\langle\frac{-mp^i}{q-1}\rangle)},\notag
\end{align}
where $$s=\sum_{i=0}^{r-1}\left\{\langle\frac{-2mp^i}{q-1}\rangle+\langle\frac{mdp^i}{q-1}\rangle
+\langle\frac{-m(d-1)p^i}{q-1}\rangle-\langle\frac{-mp^i}{q-1}\rangle\right\}.$$
Using Lemma \ref{lemma4} and rearranging the terms, we have
\begin{align}
q\cdot N_d&=q^2+q\phi(b)+\frac{q}{q-1}+\frac{q\phi(b)}{q-1}+\frac{q}{q-1}\sum_{\substack{m=1\\m\neq \frac{q-1}{2}}}^{q-2}\pi^{(p-1)s}
~\overline{\omega}^m\left(\frac{b^{d-1}d^d}{a^d(d-1)^{d-1}}\right)\notag\\
&\times\prod_{i=0}^{r-1}\Gamma_p(\langle\frac{mp^i}{q-1}\rangle)\Gamma_p(\langle(1-\frac{m}{q-1})p^i\rangle)
\frac{\Gamma_p(\langle(\frac{1}{2}-\frac{m}{q-1})p^i\rangle)
\Gamma_p(\langle(\frac{1}{2}+\frac{m}{q-1})p^i\rangle)}{\Gamma_p(\langle\frac{p^i}{2}\rangle)
\Gamma_p(\langle\frac{p^i}{2}\rangle)}\notag\\
&\times\prod_{i=0}^{r-1}\frac{\prod_{h=0}^{d-3}\Gamma_p(\langle(\frac{1+h}{d-1}-\frac{m}{q-1})p^i\rangle)}
{\prod_{h=1}^{d-2}\Gamma_p(\langle\frac{hp^i}{d-1}\rangle)}
\prod_{\substack{h=1\\h\neq \frac{d}{2}}}^{d-1}\frac{\Gamma_p(\langle(\frac{h}{d}+\frac{m}{q-1})p^i\rangle)}
{\Gamma_p(\langle\frac{hp^i}{d}\rangle)}.\notag
\end{align}
Now using Lemma \ref{lemma8} we have
\begin{align}\label{eq-35}
q\cdot N_d&=q^2+q\phi(b)+\frac{q}{q-1}+\frac{q\phi(b)}{q-1}+\frac{q}{q-1}\notag\\
&\times\sum_{\substack{m=1\\ m\neq\frac{q-1}{2}}}^{q-2}(-1)^r\pi^{(p-1)s}
\overline{\omega}^m\left(\frac{b^{d-1}d^d}{a^d(d-1)^{d-1}}\right)\notag\\
&\times\prod_{i=0}^{r-1}\prod_{h=1}^{d-2}\frac{\Gamma_p(\langle(\frac{h}{d-1}-\frac{m}{q-1})p^i\rangle)}
{\Gamma_p(\langle\frac{hp^i}{d-1}\rangle)}\prod_{\substack{h=1\\h\neq\frac{d}{2}}}^{d-1}
\frac{\Gamma_p(\langle(\frac{h}{d}+\frac{m}{q-1})p^i\rangle)}{\Gamma_p(\langle\frac{hp^i}{d}\rangle)}.
\end{align}
Simplifying the term $s$ we obtain
$$s=\sum_{i=0}^{r-1}\left\{\lfloor\frac{-mp^i}{q-1}\rfloor-\lfloor\frac{-2mp^i}{q-1}\rfloor-
\lfloor\frac{mdp^i}{q-1}\rfloor-\lfloor\frac{-m(d-1)p^i}{q-1}\rfloor\right\},$$
which is an integer. Plugging this expression in \eqref{eq-35} we have
\begin{align}
q\cdot N_d&=q^2+q\phi(b)+\frac{q}{q-1}+\frac{q\phi(b)}{q-1}+\frac{q^2}{q-1}\notag\\
&\times\sum_{\substack{m=1\\m\neq\frac{q-1}{2}}}^{q-2}
\overline{\omega}^m\left(\frac{b^{d-1}d^d}{a^d(d-1)^{d-1}}\right)\notag\\
&\times(-p)^{\sum_{i=0}^{r-1}\left\{\lfloor\frac{-mp^i}{q-1}\rfloor-\lfloor\frac{-2mp^i}{q-1}\rfloor-
\lfloor\frac{mdp^i}{q-1}\rfloor-\lfloor\frac{-m(d-1)p^i}{q-1}\rfloor-1\right\}}\notag\\
&\times\prod_{i=0}^{r-1}\prod_{h=1}^{d-2}\frac{\Gamma_p(\langle(\frac{h}{d-1}-\frac{m}{q-1})p^i\rangle)}
{\Gamma_p(\langle\frac{hp^i}{d-1}\rangle)}\prod_{\substack{h=1\\h\neq\frac{d}{2}}}^{d-1}
\frac{\Gamma_p(\langle(\frac{h}{d}+\frac{m}{q-1})p^i\rangle)}{\Gamma_p(\langle\frac{hp^i}{d}\rangle)}.\notag
\end{align}
Now using Lemma \ref{lemma5} we obtain
\begin{align}\label{eq-108}
q\cdot N_d&=q^2+q\phi(b)+\frac{q}{q-1}+\frac{q\phi(b)}{q-1}+\frac{q^2}{q-1}\notag\\
&\times\sum_{\substack{m=1\\m\neq\frac{q-1}{2}}}^{q-2}
\overline{\omega}^m\left(\frac{b^{d-1}d^d}{a^d(d-1)^{d-1}}\right)\notag\\
&\times\prod_{i=0}^{r-1}(-p)^{-\left\{\sum_{h=1}^{d-2}\lfloor\langle\frac{hp^i}{d-1}\rangle-\frac{mp^i}{q-1}\rfloor
+\sum_{h=1,~h\neq \frac{d}{2}}^{d-1}\lfloor\langle \frac{-hp^i}{d}\rangle+\frac{mp^i}{q-1}\rfloor\right\}}\notag\\
&\times\prod_{i=0}^{r-1}\prod_{h=1}^{d-2}\frac{\Gamma_p(\langle(\frac{h}{d-1}-\frac{m}{q-1})p^i\rangle)}
{\Gamma_p(\langle\frac{hp^i}{d-1}\rangle)}\prod_{\substack{h=1\\h\neq\frac{d}{2}}}^{d-1}
\frac{\Gamma_p(\langle(\frac{h}{d}+\frac{m}{q-1})p^i\rangle)}{\Gamma_p(\langle\frac{hp^i}{d}\rangle)}.
\end{align}
Now for $m=0$ we have the following identities:
\begin{align}
\sum_{h=1}^{d-2}\lfloor\langle\frac{hp^i}{d-1}\rangle-\frac{mp^i}{q-1}\rfloor
+\sum_{h=1,~h\neq \frac{d}{2}}^{d-1}\lfloor\langle \frac{-hp^i}{d}\rangle+\frac{mp^i}{q-1}\rfloor=0\notag
\end{align}
and 
\begin{align}
\overline{\omega}^m\left(\frac{b^{d-1}d^d}{a^d(d-1)^{d-1}}\right)
\prod_{i=0}^{r-1}\prod_{h=1}^{d-2}\frac{\Gamma_p(\langle(\frac{h}{d-1}-\frac{m}{q-1})p^i\rangle)}
{\Gamma_p(\langle\frac{hp^i}{d-1}\rangle)}\prod_{\substack{h=1\\h\neq\frac{d}{2}}}^{d-1}
\frac{\Gamma_p(\langle(\frac{h}{d}+\frac{m}{q-1})p^i\rangle)}{\Gamma_p(\langle\frac{hp^i}{d}\rangle)}=1.\notag
\end{align}
Also for $m=\frac{q-1}{2}$ we have 
\begin{align}
\sum_{h=1}^{d-2}\lfloor\langle\frac{hp^i}{d-1}\rangle-\frac{mp^i}{q-1}\rfloor
+\sum_{h=1,~h\neq \frac{d}{2}}^{d-1}\lfloor\langle \frac{-hp^i}{d}\rangle+\frac{mp^i}{q-1}\rfloor=0\notag
\end{align}
and by Lemma \ref{lemma4} we have 
\begin{align}
\overline{\omega}^m\left(\frac{b^{d-1}d^d}{a^d(d-1)^{d-1}}\right)
\prod_{i=0}^{r-1}\prod_{h=1}^{d-2}\frac{\Gamma_p(\langle(\frac{h}{d-1}-\frac{m}{q-1})p^i\rangle)}
{\Gamma_p(\langle\frac{hp^i}{d-1}\rangle)}\prod_{\substack{h=1\\h\neq\frac{d}{2}}}^{d-1}
\frac{\Gamma_p(\langle(\frac{h}{d}+\frac{m}{q-1})p^i\rangle)}{\Gamma_p(\langle\frac{hp^i}{d}\rangle)}=\phi(b).\notag
\end{align}
Using all these four identities in \eqref{eq-108} we deduce that
\begin{align}\label{new-eq-4}
q\cdot N_d&=q^2+q\phi(b)+\frac{q}{q-1}-\frac{q^2}{q-1}+\frac{q\phi(b)}{q-1}-\frac{q^2\phi(b)}{q-1}+\frac{q^2}{q-1}\notag\\
&\times\sum_{m=0}^{q-2}\overline{\omega}^m\left(\frac{b^{d-1}d^d}{a^d(d-1)^{d-1}}\right)\notag\\
&\times\prod_{i=0}^{r-1}(-p)^{-\left\{\sum_{h=1}^{d-2}\lfloor\langle\frac{hp^i}{d-1}\rangle-\frac{mp^i}{q-1}\rfloor
+\sum_{h=1,~h\neq \frac{d}{2}}^{d-1}\lfloor\langle \frac{-hp^i}{d}\rangle+\frac{mp^i}{q-1}\rfloor\right\}}\notag\\
&\times\prod_{i=0}^{r-1}\prod_{h=1}^{d-2}\frac{\Gamma_p(\langle(\frac{h}{d-1}-\frac{m}{q-1})p^i\rangle)}
{\Gamma_p(\langle\frac{hp^i}{d-1}\rangle)}\prod_{\substack{h=1\\h\neq\frac{d}{2}}}^{d-1}
\frac{\Gamma_p(\langle(\frac{h}{d}+\frac{m}{q-1})p^i\rangle)}{\Gamma_p(\langle\frac{hp^i}{d}\rangle)}\notag\\
&=q^2-q-q^2\notag\\
&\times{_{d-2}}G_{d-2}\left[\begin{array}{ccccccc}
                                \frac{1}{d-1}, & \frac{2}{d-1}, & \ldots, & \frac{d-2}{2(d-1)}, & \frac{d}{2(d-1)},
                                & \ldots, & \frac{d-2}{d-1} \\
                                \frac{1}{d}, & \frac{2}{d}, & \ldots, & \frac{d-2}{2d}, & \frac{d+2}{2d}, & \ldots,
                                 & \frac{d-1}{d}
                              \end{array}|f(0)
\right]_q.
\end{align}
We obtain \eqref{new-eq-4} by using the fact that
\begin{align}
 \prod_{\substack{h=1\\h\neq\frac{d}{2}}}^{d-1}
\frac{\Gamma_p(\langle(\frac{h}{d}+\frac{m}{q-1})p^i\rangle)}{\Gamma_p(\langle\frac{hp^i}{d}\rangle)}=\prod_{\substack{h=1\\h\neq\frac{d}{2}}}^{d-1}
\frac{\Gamma_p(\langle(\frac{-h}{d}+\frac{m}{q-1})p^i\rangle)}{\Gamma_p(\langle\frac{-hp^i}{d}\rangle)}.\notag
\end{align}
Now canceling $q$ from both the sides of \eqref{new-eq-4}, we obtain the required result. This completes the proof of the theorem.
\end{proof}
\begin{theorem}\label{thm-4}
Let $d\geq3$ be odd, and let $p$ be an odd prime such that $p\nmid d(d-1)$. Then
\begin{align}
&N_d = q-q\phi(-ab)\notag\\
&\times{_{d-1}}G_{d-1}\left[\begin{array}{ccccccc}
                                 \frac{1}{2(d-1)}, & \frac{3}{2(d-1)}, & \ldots, & \frac{d-2}{2(d-1)}, & \frac{d}{2(d-1)},
                                  & \ldots, & \frac{2d-3}{2(d-1)} \\
                                 \frac{1}{2d}, & \frac{3}{2d}, & \ldots, & \frac{d-2}{2d}, & \frac{d+2}{2d},
                                  & \ldots, & \frac{2d-1}{2d}
                               \end{array}|-f(0)
\right]_q,\notag
\end{align}
where $f$ is defined as in Theorem \ref{thm-1}.
\end{theorem}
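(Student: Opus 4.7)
The proof will follow the same template as Theorem \ref{thm-3}, adapting it to the odd-degree setting by invoking Lemma \ref{lemma9} in place of Lemma \ref{lemma5} and tracking the parity shifts carefully. First I would set $E_d(x,y) = x^d + ax + b - y^2$ and use the orthogonality identity
\begin{align}
\sum_{z\in\mathbb{F}_q}\theta(zE_d(x,y)) = \begin{cases} q & \text{if } E_d(x,y)=0 \\ 0 & \text{otherwise}\end{cases} \notag
\end{align}
to write $q\cdot N_d = \sum_{x,y,z\in\mathbb{F}_q}\theta(zE_d(x,y))$. Splitting off the degenerate cases where one or more of $x,y,z$ equals zero gives $q\cdot N_d = q^2 + A + B + C + D$, where $A,B,C$ are handled exactly as before via \eqref{new-eq-3}, Lemma \ref{lemma1}, and Lemma \ref{lemma2}, producing $A = -1$, $B = 1 + q\phi(b)$, and $C$ cancelling with one of the two terms that arise when expanding $D$.

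Second, I would expand $D$ by applying Lemma \ref{lemma1} to each of the four $\theta$-factors. Orthogonality (Lemma \ref{lemma2}) on the resulting inner sums over $x$, $y$, $z$ forces $l = -md$, $2k \equiv 0$, and $l+m+n+k \equiv 0$ modulo $q-1$. This leaves two contributions corresponding to $k=0$ and $k = \frac{q-1}{2}$, exactly as in the even case; the $k=0$ piece cancels $C$, and the $k = \frac{q-1}{2}$ piece yields the single sum
\begin{align}
\frac{\phi(-b)}{q-1}\sum_{m=0}^{q-2}G_{-m}\,G_{md}\,G_{-m(d-1)+\frac{q-1}{2}}\,G_{\frac{q-1}{2}}\,T^m\!\left(\tfrac{b^{d-1}}{a^d}\right). \notag
\end{align}
Here the crucial parity difference appears: when I shift $m \mapsto m - \frac{q-1}{2}$, the exponent $-md$ shifts by $-\tfrac{q-1}{2}d \equiv \tfrac{q-1}{2} \pmod{q-1}$ (because $d$ is odd), so $T^{-md}(a)$ picks up an extra $\phi(a)$ factor, while $m(d-1)$ shifts by a multiple of $q-1$ (because $d-1$ is even) and contributes nothing. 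Combined with the existing $\phi(-b)$, this produces the factor $\phi(-ab)$ that appears in the statement.

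Third, I would apply the Davenport--Hasse relation (Theorem \ref{lemma3}) with $k=2$ and $\psi = T^{-m}$ to rewrite $G_{-m+\frac{q-1}{2}}$ in terms of $G_{-2m}$ and $G_{-m}$, and then use the Gross--Koblitz formula (Theorem \ref{thm4}) to convert all Gauss sums into products of $p$-adic gamma values. The product formula of Lemma \ref{lemma4} (applied with $t = d-1$ and $t = d$) then reshapes the gamma quotients into the $d-1$ upper and $d-1$ lower parameters predicted by the theorem, and Lemma \ref{lemma8} produces the factor $\overline{\omega}^m(-1)$ needed to match the inner sign of $-f(0)$. The power of $-p$ and the integrality of the exponent come from Lemma \ref{lemma9}, which is the odd-$d$ analog of the floor-identity used in Theorem \ref{thm-3}.

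Finally, after restoring the missing $m=0$ and $m = \tfrac{q-1}{2}$ terms (which contribute precisely the scalars needed to cancel the leftover $q\phi(b)$, $\tfrac{q}{q-1}$, and boundary pieces), I would divide by $q$ to obtain the stated formula. The main obstacle I expect is bookkeeping: correctly matching the parity shifts so that $\phi(-ab)$ emerges with the right sign, and verifying that the floor-function sum of Lemma \ref{lemma9} aligns exactly with the parameter list $\{\tfrac{1}{2(d-1)},\tfrac{3}{2(d-1)},\ldots\}$ and $\{\tfrac{1}{2d},\tfrac{3}{2d},\ldots\}$ appearing in the $G$-function. No new techniques are needed beyond those already deployed in Theorem \ref{thm-3}; the odd-$d$ symmetry makes the second product of gamma values run over all $h \in \{1,\ldots,d-1\}$ (no excluded index $\tfrac{d}{2}$), while the extra factor $\Gamma_p(\langle\tfrac{p^i}{2}\rangle - \tfrac{mp^i}{q-1})$ supplied by Lemma \ref{lemma9} absorbs into the $\tfrac{1}{2}$ implicitly present through the Davenport--Hasse step.
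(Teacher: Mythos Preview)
Your overall strategy is right, but there is a genuine gap in the third step that would prevent the argument from closing. After the shift $m\mapsto m-\tfrac{q-1}{2}$, the parity of $d$ affects \emph{all three} Gauss-sum indices, not just the character factor: since $d$ is odd and $d-1$ is even, the sum becomes
\[
\frac{\phi(-ab)}{q-1}\sum_{m=0}^{q-2}
G_{-m+\frac{q-1}{2}}\;G_{md+\frac{q-1}{2}}\;G_{-m(d-1)+\frac{q-1}{2}}\;G_{\frac{q-1}{2}}\;
T^{m}\!\left(\tfrac{b^{d-1}}{a^{d}}\right),
\]
so three of the four Gauss sums now carry a $\tfrac{q-1}{2}$ offset. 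Applying Davenport--Hasse only once, to $G_{-m+\frac{q-1}{2}}$, leaves $G_{md+\frac{q-1}{2}}$ and $G_{-m(d-1)+\frac{q-1}{2}}$ untouched; Gross--Koblitz on these gives gamma values of the form $\Gamma_p\bigl(\langle\tfrac{mdp^i}{q-1}+\tfrac{p^i}{2}\rangle\bigr)$, which Lemma~\ref{lemma4} with $t=d-1$ or $t=d$ cannot reshape into the required parameters with denominators $2(d-1)$ and $2d$. Those choices of $t$ produce the parameter set $\{\tfrac{h}{d-1}\}\cup\{\tfrac{1}{2}\}$ and $\{\tfrac{h}{d}\}$, which is the content of Theorem~\ref{thm-6}, not Theorem~\ref{thm-4}.

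The paper's route is to apply Davenport--Hasse \emph{three} times (with $\psi=T^{-m}$, $T^{md}$, and $T^{-m(d-1)}$), which collapses the Gauss-sum block to
\[
\frac{G_{-2m}\,G_{2md}\,G_{-2m(d-1)}}{G_{-m}\,G_{md}\,G_{-m(d-1)}}\;G_{\frac{q-1}{2}}^{4},
\]
the four powers of $T(4)$ cancelling exactly. One then applies Lemma~\ref{lemma4} with $t=2(d-1)$ and $t=2d$ to both numerator and denominator; the even-index gamma factors in numerator and denominator cancel, leaving precisely the odd-numerator parameters $\tfrac{h}{2(d-1)}$ and $\tfrac{h}{2d}$ listed in the statement. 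The matching floor-function identity is Lemma~\ref{lemma6}, not Lemma~\ref{lemma9}: note that Lemma~\ref{lemma6} controls $\lfloor\tfrac{2mdp^i}{q-1}\rfloor$, $\lfloor\tfrac{-2m(d-1)p^i}{q-1}\rfloor$, etc., and its right-hand side runs over odd $h$ with denominators $2(d-1)$ and $2d$, exactly the shape needed here. Lemma~\ref{lemma9} is the tool for Theorem~\ref{thm-6}. With this correction the remainder of your outline (restoring the boundary terms at $m=0,\tfrac{q-1}{2}$ and dividing by $q$) goes through as you describe.
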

\begin{proof}
Following the proof of Theorem \ref{thm-3} we have
\begin{align}
q\cdot N_d&=q^2+q\phi(b)+\frac{\phi(-b)}{q-1}\sum_{m=0}^{q-2}G_{-m}G_{md}G_{-m(d-1)+\frac{q-1}{2}}G_{\frac{q-1}{2}}T^{m}\left(\frac{b^{d-1}}{a^d}\right).\notag
\end{align}
Replacing $m$ by $m-\frac{q-1}{2}$ we have
\begin{align}\label{eq-36}
q\cdot N_d&=q^2+q\phi(b)+\frac{\phi(-b)}{q-1}\sum_{m=0}^{q-2}
G_{-m+\frac{q-1}{2}}G_{md+\frac{q-1}{2}}G_{-m(d-1)+\frac{q-1}{2}}G_{\frac{q-1}{2}}\notag\\
&\times T^{m-\frac{q-1}{2}}\left(\frac{b^{d-1}}{a^d}\right)\notag\\
&=q^2+q\phi(b)+\frac{\phi(-ab)}{q-1}\sum_{m=0}^{q-2}G_{-m+\frac{q-1}{2}}G_{md+\frac{q-1}{2}}
G_{-m(d-1)+\frac{q-1}{2}}\notag\\
&\times G_{\frac{q-1}{2}}T^m\left(\frac{b^{d-1}}{a^d}\right).
\end{align}
Using Davenport-Hasse relation (Theorem \ref{lemma3}) for $k=2$ and $\psi=T^{-m}, T^{md}$, and $T^{-m(d-1)}$ successively we deduce that
\begin{align}
G_{-m+\frac{q-1}{2}}&=\frac{G_{\frac{q-1}{2}}G_{-2m}T^m(4)}{G_{-m}},\notag\\
G_{md+\frac{q-1}{2}}&=\frac{G_{\frac{q-1}{2}}G_{2md}T^{md}(4^{-1})}{G_{md}},\notag\\
G_{-m(d-1)+\frac{q-1}{2}}&=\frac{G_{\frac{q-1}{2}}G_{-2m(d-1)}T^{m(d-1)}(4)}{G_{-m(d-1)}}.\notag
\end{align}
Using all the above expressions and Lemma \ref{fusi3} in \eqref{eq-36} we obtain
\begin{align}
q\cdot N_d&=q^2+q\phi(b)+\frac{\phi(-ab)}{q-1}\sum_{m=0}^{q-2}
\frac{G_{-2m}G_{2md}G_{-2m(d-1)}}{G_{-m}G_{md}G_{-m(d-1)}}G_{\frac{q-1}{2}}^4
T^{m}\left(\frac{b^{d-1}}{a^d}\right)\notag\\
&=q^2+ q\phi(b)+\frac{q\phi(b)}{q-1}+\frac{q^2\phi(-ab)}{q-1}\sum_{\substack{m=0\\m\neq \frac{q-1}{2}}}^{q-2}\frac{G_{-2m}G_{2md}G_{-2m(d-1)}}{G_{-m}G_{md}G_{-m(d-1)}}
T^{m}\left(\frac{b^{d-1}}{a^d}\right).\notag
\end{align}
Now we put $T=\overline{\omega}$, and then using Gross-Koblitz
formula (Theorem \ref{thm4}) we deduce that
\begin{align}
q\cdot N_d&=q^2+q\phi(b)+\frac{q\phi(b)}{q-1}+\frac{q^2\phi(-ab)}{q-1}\sum_{\substack{m=0\\m\neq \frac{q-1}{2}}}^{q-2}
\pi^{(p-1)s}~\overline{\omega}^m\left(\frac{b^{d-1}}{a^d}\right)\notag\\
&\times\prod_{i=0}^{r-1}\frac{\Gamma_p(\langle\frac{-2mp^i}{q-1}\rangle)\Gamma_p(\langle\frac{2mdp^i}{q-1}\rangle)
\Gamma_p(\langle\frac{-2m(d-1)p^i}{q-1}\rangle)}
{\Gamma_p(\langle\frac{-mp^i}{q-1}\rangle)\Gamma_p(\langle\frac{mdp^i}{q-1}\rangle)
\Gamma_p(\langle\frac{-m(d-1)p^i}{q-1}\rangle)},\notag
\end{align}
where
\begin{align}
s&=\sum_{i=0}^{r-1}\left\{\langle\frac{-2mp^i}{q-1}\rangle+\langle\frac{2mdp^i}{q-1}\rangle
+\langle\frac{-2m(d-1)p^i}{q-1}\rangle\right\}\notag\\
&-\sum_{i=0}^{r-1}\left\{\langle\frac{-mp^i}{q-1}\rangle+\langle\frac{mdp^i}{q-1}\rangle
+\langle\frac{-m(d-1)p^i}{q-1}\rangle\right\}.
\end{align}
Using Lemma \ref{lemma4} and Lemma \ref{lemma8} we deduce that
\begin{align}
q\cdot N_d&=q^2+q\phi(b)+\frac{q\phi(b)}{q-1}+\frac{q^2\phi(-ab)}{q-1}\sum_{\substack{m=0\\m\neq \frac{q-1}{2}}}^{q-2}
\pi^{(p-1)s}\overline{\omega}^m\left(\frac{b^{d-1}d^d}{a^d(d-1)^{d-1}}\right)\notag\\
&\times \prod_{i=0}^{r-1}\frac{\Gamma_p(\langle(\frac{1}{2}-\frac{m}{q-1})p^i\rangle)
\Gamma_p(\langle(\frac{1}{2}+\frac{m}{q-1})p^i\rangle)}{\Gamma_p(\langle\frac{p^i}{2}\rangle)
\Gamma_p(\langle\frac{p^i}{2}\rangle)}\notag\\
&\times\prod_{i=0}^{r-1}\prod_{\substack{h=1\\h~odd}}^{2d-3}\frac{\Gamma_p(\langle(\frac{h}{2(d-1)}-\frac{m}{q-1})p^i\rangle)}
{\Gamma_p(\langle\frac{hp^i}{2(d-1)}\rangle)}
\times\prod_{\substack{h=1\\h~odd\\ h\neq d}}^{2d-1}\frac{\Gamma_p(\langle(\frac{h}{2d}+\frac{m}{q-1})p^i\rangle}
{\Gamma_p(\langle\frac{hp^i}{2d}\rangle)}\notag\\
&=q^2+q\phi(b)+\frac{q\phi(b)}{q-1}+\frac{q^2\phi(-ab)}{q-1}\sum_{\substack{m=0\\m\neq\frac{q-1}{2}}}^{q-2}\pi^{(p-1)s}
\overline{\omega}^m\left(\frac{-b^{d-1}d^d}{a^d(d-1)^{d-1}}\right)\notag\\
&\times\prod_{i=0}^{r-1}\prod_{\substack{h=1\\h~odd}}^{2d-3}\frac{\Gamma_p(\langle(\frac{h}{2(d-1)}-\frac{m}{q-1})p^i\rangle)}
{\Gamma_p(\langle\frac{hp^i}{2(d-1)}\rangle)}
\times\prod_{\substack{h=1\\h~ odd\\ h\neq d}}^{2d-1}\frac{\Gamma_p(\langle(\frac{h}{2d}+\frac{m}{q-1})p^i\rangle}
{\Gamma_p(\langle\frac{hp^i}{2d}\rangle)}.\notag\\
&=q^2+q\phi(b)+\frac{q\phi(b)}{q-1}+\frac{q^2\phi(-ab)}{q-1}\sum_{\substack{m=0\\m\neq\frac{q-1}{2}}}^{q-2}\pi^{(p-1)s}
\overline{\omega}^m\left(\frac{-b^{d-1}d^d}{a^d(d-1)^{d-1}}\right)\notag\\
&\times\prod_{i=0}^{r-1}\prod_{\substack{h=1\\h~odd}}^{2d-3}\frac{\Gamma_p(\langle(\frac{h}{2(d-1)}-\frac{m}{q-1})p^i\rangle)}
{\Gamma_p(\langle\frac{hp^i}{2(d-1)}\rangle)}
\times\prod_{\substack{h=1\\h~ odd\\ h\neq d}}^{2d-1}\frac{\Gamma_p(\langle(\frac{-h}{2d}+\frac{m}{q-1})p^i\rangle}
{\Gamma_p(\langle\frac{-hp^i}{2d}\rangle)}.\notag
\end{align}
Simplifying $s$ we obtain
\begin{align}
s&=\sum_{i=0}^{r-1}\left\{\lfloor\frac{-mp^i}{q-1}\rfloor
+\lfloor\frac{mdp^i}{q-1}\rfloor+\lfloor\frac{-m(d-1)p^i}{q-1}\rfloor\right\}\notag\\
&-\sum_{i=0}^{r-1}\left\{\lfloor\frac{-2mp^i}{q-1}\rfloor+\lfloor\frac{2mdp^i}{q-1}\rfloor
+\lfloor\frac{-2m(d-1)p^i}{q-1}\rfloor\right\},\notag
\end{align}
which is an integer. Now we use similar steps as shown in the proof of Theorem \ref{thm-3}. 
We first calculate the term under summation for $m=\frac{q-1}{2}$ separately using Lemma \ref{lemma4} and Lemma \ref{lemma6}, and then we deduce that 
\begin{align}
&q\cdot N_d=q^2+q\phi(b)+\frac{q\phi(b)}{q-1}-\frac{q^2\phi(b)}{q-1}-q^2\phi(-ab)\notag\\
&\times{_{d-1}}G_{d-1}\left[\begin{array}{ccccccc}
                                 \frac{1}{2(d-1)}, & \frac{3}{2(d-1)}, & \ldots, & \frac{d-2}{2(d-1)}, & \frac{d}{2(d-1)},
                                  & \ldots, & \frac{2d-3}{2(d-1)} \\
                                 \frac{1}{2d}, & \frac{3}{2d}, & \ldots, & \frac{d-2}{2d}, & \frac{d+2}{2d},
                                  & \ldots, & \frac{2d-1}{2d}
                               \end{array}|-f(0)
\right]_q\notag\\
&=q^2-q^2\phi(-ab)\notag\\
&\times{_{d-1}}G_{d-1}\left[\begin{array}{ccccccc}
                                 \frac{1}{2(d-1)}, & \frac{3}{2(d-1)}, & \ldots, & \frac{d-2}{2(d-1)}, & \frac{d}{2(d-1)},
                                  & \ldots, & \frac{2d-3}{2(d-1)} \\
                                 \frac{1}{2d}, & \frac{3}{2d}, & \ldots, & \frac{d-2}{2d}, & \frac{d+2}{2d},
                                  & \ldots, & \frac{2d-1}{2d}
                               \end{array}|-f(0)
\right]_q.\notag
\end{align}
Canceling $q$ from both sides we obtain the required result. This completes the proof of the theorem.
\end{proof}
\begin{theorem}\label{thm-5}
Let $d\geq4$ be even, and let $p$ be an odd prime such that $p\nmid d(d-1)$. Then
\begin{align}
N_{d}^{\prime}&=q-1-q\phi(b)\notag\\
&\times{_{d-2}}G_{d-2}\left[\begin{array}{ccccccc}
                         \frac{1}{d-1}, & \frac{2}{d-1}, & \ldots, & \frac{d-2}{2(d-1)}, & \frac{d}{2(d-1)},
                          & \ldots, & \frac{d-2}{(d-1)}\\
                         \frac{1}{d}, & \frac{2}{d}, & \ldots, & \frac{d-2}{2d}, & \frac{d+2}{2d}, & \ldots,
                          & \frac{d-1}{d}
                       \end{array}|g(0)
\right]_q,\notag
\end{align}
where $g$ is defined as in Theorem \ref{thm-1}.
\end{theorem}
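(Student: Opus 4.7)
The plan is to mimic the proof of Theorem \ref{thm-3} step-for-step, making one structural change forced by the shift from $ax$ to $ax^{d-1}$ in the middle monomial. Letting $E_d^{\prime}(x,y) = x^d + ax^{d-1} + b - y^2$, I will start from the orthogonality identity
\[
q \cdot N_d^{\prime} = \sum_{x,y,z \in \mathbb{F}_q} \theta\bigl(z E_d^{\prime}(x,y)\bigr) = q^2 + A + B + C^{\prime} + D^{\prime},
\]
where $A, B, C^{\prime}, D^{\prime}$ correspond (with $z \neq 0$) to the cases $(x,y) = (0,0)$, $x = 0$ with $y \neq 0$, $x \neq 0$ with $y = 0$, and $x, y \neq 0$. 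The contributions $A = -1$ and $B = 1 + q\phi(b)$ are unchanged from the proof of Theorem \ref{thm-3}, since they do not see the polynomial $x^d + ax^{d-1}$.

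In expanding $C^{\prime}$ and $D^{\prime}$ via Lemma \ref{lemma1}, the only new ingredient is that $\theta(zax^{d-1})$ contributes the character $T^{l(d-1)}$ on $x$, so combined with $T^{md}$ from $\theta(zx^d)$ the inner $x$-sum is nonzero precisely when $md + l(d-1) \equiv 0 \pmod{q-1}$. Since $\gcd(d-1,d) = 1$, this subgroup of $(\mathbb{Z}/(q-1))^2$ has order $q-1$ and is parametrized bijectively by $t \in \{0,1,\ldots,q-2\}$ through $(m,l) = (t(d-1),\, -td) \bmod (q-1)$; the $z$-sum then forces $n = -m-l = t$. With this substitution, the rest of the computation mirrors Theorem \ref{thm-3}: the $y$-sum splits into $k = 0$ (which removes $C^{\prime}$ from $D^{\prime}$) and $k = (q-1)/2$, and applying Davenport-Hasse (Theorem \ref{lemma3}) with $\psi = T^{-t}$, $k = 2$, together with $G_{(q-1)/2}^2 = q\phi(-1)$ from Lemma \ref{fusi3}, yields
\[
q \cdot N_d^{\prime} = q^2 + q\phi(b) + \frac{q\phi(b)}{q-1} \sum_{t=0}^{q-2} \frac{G_{-2t}\,G_{td}\,G_{-t(d-1)}}{G_{-t}}\, T^t\!\left(\frac{4b}{a^d}\right).
\]
Compared with its analogue in Theorem \ref{thm-3}, the coefficient has picked up an extra $\phi(b)$ and the argument of $T^t$ has $b$ in place of $b^{d-1}$, both consequences of the relocated $a$-monomial.

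From here the remaining steps are mechanical and follow Theorem \ref{thm-3} almost verbatim. I set $T = \overline{\omega}$, apply Gross-Koblitz (Theorem \ref{thm4}) to each Gauss sum, regroup the resulting $\Gamma_p$-products via Lemmas \ref{lemma4} and \ref{lemma8} into the parameter sets $\{h/(d-1)\}_{h=1}^{d-2}$ and $\{h/d : 1 \leq h \leq d-1,\, h \neq d/2\}$ appearing in the target ${_{d-2}}G_{d-2}$, and use Lemma \ref{lemma5} to convert the $(p-1)$-exponent of $\pi$ into the integer exponent of $-p$ required by Definition \ref{defin1}. The Teichm\"uller factor $\overline{\omega}^t\bigl(d^d/(d-1)^{d-1}\bigr)$ produced by Lemma \ref{lemma4} combines with $T^t(4b/a^d)$ to yield exactly $\overline{\omega}^t(g(0))$. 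The two degenerate summands $t = 0$ and $t = (q-1)/2$ must be isolated: direct computation shows they contribute $\frac{q\phi(b)}{q-1}$ and $\frac{q\phi(b)^2}{q-1} = \frac{q}{q-1}$ respectively, and these boundary terms telescope against $q^2 + q\phi(b)$ exactly as in Theorem \ref{thm-3}, leaving $q \cdot N_d^{\prime} = q^2 - q - q^2\phi(b) \cdot {_{d-2}}G_{d-2}[\,\cdots\,|\,g(0)]_q$. Dividing by $q$ gives the claimed formula.

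The main technical checkpoint is verifying that the bijective reparametrization by $t$ preserves the index structure $(-2t,\,td,\,-t(d-1),\,-t)$ for which Lemmas \ref{lemma5} and \ref{lemma8} were tailored, so that the entire $p$-adic gamma machinery of Theorem \ref{thm-3} transfers without modification; once that is in place, the only substantive bookkeeping is carrying the $\phi(b)$ through Davenport-Hasse to land as the $-q\phi(b)$ prefactor of the final $G$-function.
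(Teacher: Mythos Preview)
Your proposal is correct and follows essentially the same route as the paper's proof: the same additive-character decomposition $q\cdot N_d' = q^2 + A + B + C' + D'$, the same reduction to the single Gauss-sum expression $\frac{q\phi(b)}{q-1}\sum \frac{G_{-2t}G_{td}G_{-t(d-1)}}{G_{-t}}\,T^t(4b/a^d)$ via Davenport--Hasse with $k=2$, and the same conversion to the ${_{d-2}}G_{d-2}$ form through Gross--Koblitz, Lemma~\ref{lemma4}, Lemma~\ref{lemma8}, and Lemma~\ref{lemma5}. The only cosmetic difference is that you parametrize the kernel of $(m,l)\mapsto md+l(d-1)$ directly by $t$, whereas the paper parametrizes by the $b$-index $n$ and then shifts $n\mapsto n-\tfrac{q-1}{2}$; these land on the same summation variable (note your line ``$n=-m-l=t$'' should read $n=-m-l-k=t-k$, but you correctly handle both $k$-branches immediately after).
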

\begin{proof} Let $E_d'(x, y)=x^d+ax^{d-1}+b - y^2$. Using the identity
\begin{align}
\sum_{z\in\mathbb{F}_q}\theta(zE_{d}^{\prime}(x,y))=\left\{
                                         \begin{array}{ll}
                                           q, & \hbox{if $E_{d}^{\prime}(x,y)=0$;} \\
                                           0, & \hbox{if $E_{d}^{\prime}(x,y)\neq0$,}
                                         \end{array}
                                       \right.
\end{align}
we obtain
\begin{align}\label{eq-37}
q\cdot N_{d}^{\prime}&=\sum_{x,y,z\in\mathbb{F}_q}\theta(zE_{d}^{\prime}(x,y))\notag\\
&=q^2+\sum_{z\in\mathbb{F}_{q}^{\times}}\theta(zb)+\sum_{y,z\in\mathbb{F}_{q}^{\times}}\theta(bz)\theta(-zy^2)+
\sum_{x,z\in\mathbb{F}_{q}^{\times}}\theta(zx^d)\theta(zax^{d-1})\theta(zb)\notag\\
&+\sum_{x,y,z\in\mathbb{F}_{q}^{\times}}\theta(x^dz)\theta(ax^{d-1}z)\theta(bz)\theta(-zy^2)\notag\\
&=q^2+A+B+C+D.
\end{align}
Following the proof of Theorem \ref{thm-3} we have
$A=-1$, $B=1+q\phi(b)$, and
\begin{align}\label{eq-38}
D&=\sum_{x,y,z\in\mathbb{F}_{q}^{\times}}\theta(x^dz)\theta(ax^{d-1}z)\theta(bz)\theta(-zy^2)\notag\\
&=\frac{1}{(q-1)^4}\sum_{l,m,n,k=0}^{q-2}G_{-m}G_{-l}G_{-n}G_{-k}T^l(a)T^n(b)T^k(-1)\notag\\
&\times\sum_{x\in\mathbb{F}_{q}^{\times}}T^{l(d-1)+md}(x)\sum_{y\in\mathbb{F}_{q}^{\times}}T^{2k}(y)
\sum_{z\in\mathbb{F}_{q}^{\times}}T^{l+m+n+k}(z).
\end{align}
The inner sums are non zero only if $l(d-1)+md=0$, $2k=0$, and $l+m+n+k=0$. This implies that $l=-nd$,
$k=0$ or $k=\frac{q-1}{2}$; and $m=n(d-1)$ or $m= n(d-1)+\frac{q-1}{2}$, respectively. Putting these values in \eqref{eq-38} we obtain
\begin{align}
D&=\frac{1}{q-1}\sum_{n=0}^{q-2}G_{-(d-1)n}G_{nd}G_{-n}G_{0}T^{-nd}(a)T^n(b)\notag\\
&+\frac{1}{q-1}\sum_{n=0}^{q-2}G_{-(d-1)n+\frac{q-1}{2}}G_{nd}G_{-n}G_{\frac{q-1}{2}}T^{-nd}(a)
T^n(b)T^{\frac{q-1}{2}}(-1)\notag\\
&=-\frac{1}{q-1}\sum_{n=0}^{q-2}G_{-(d-1)n}G_{nd}G_{-n}T^{-nd}(a)T^n(b)\notag\\
&+\frac{\phi(-1)}{q-1}\sum_{n=0}^{q-2}G_{-(d-1)n+\frac{q-1}{2}}G_{nd}G_{-n}G_{\frac{q-1}{2}}T^{-nd}(a)
T^n(b).
\end{align}
Expanding $C$ in a similar fashion, using Lemma \ref{lemma1}, it follows that the first term of the last expression for $D$ will be equal to $-C$. Thus,
\begin{align}
q\cdot N_{d}^{\prime}=q^2+q\phi(b)+\frac{\phi(-1)}{q-1}\sum_{n=0}^{q-2}
G_{-(d-1)n+\frac{q-1}{2}}G_{nd}G_{-n}G_{\frac{q-1}{2}}T^n\left(\frac{b}{a^d}\right).\notag
\end{align}
Replacing $n$ by $n-\frac{q-1}{2}$ we have
\begin{align}
q\cdot N_{d}^{\prime}&=q^2+q\phi(b)+\frac{\phi(-1)}{q-1}\sum_{n=0}^{q-2}
G_{-(d-1)n}G_{nd}G_{-n+\frac{q-1}{2}}G_{\frac{q-1}{2}}T^n\left(\frac{b}{a^d}\right)\phi(b)\notag\\
&=q^2+q\phi(b)+\frac{\phi(-b)}{q-1}\sum_{n=0}^{q-2}
G_{-(d-1)n}G_{nd}G_{-n+\frac{q-1}{2}}G_{\frac{q-1}{2}}T^n\left(\frac{b}{a^d}\right).\notag
\end{align}
Applying Davenport-Hasse relation (Theorem \ref{lemma3}) for $k=2$ and $\chi=T^{-n}$ as in \eqref{eq-34} and Lemma \ref{fusi3} we deduce that
\begin{align}
q\cdot N_{d}^{\prime}&=q^2+q\phi(b)+\frac{\phi(-b)}{q-1}\sum_{n=0}^{q-2}
\frac{G_{-2n}G_{nd}G_{-n(d-1)}}{G_{-n}}G_{\frac{q-1}{2}}^2T^n\left(\frac{4b}{a^d}\right)\notag\\
&=q^2+q\phi(b)+\frac{q\phi(b)}{q-1}\sum_{n=0}^{q-2}
\frac{G_{-2n}G_{nd}G_{-n(d-1)}}{G_{-n}}T^n\left(\frac{4b}{a^d}\right)\notag\\
&=q^2+q\phi(b)+\frac{q\phi(b)}{q-1}+\frac{q}{q-1}+\frac{q\phi(b)}{q-1}\sum_{\substack{n=1\\n\neq\frac{q-1}{2}}}^{q-2}
\frac{G_{-2n}G_{nd}G_{-n(d-1)}}{G_{-n}}T^n\left(\frac{4b}{a^d}\right).\notag
\end{align}
Now using similar steps as in Theorem \ref{thm-3} by applying Gross-Koblitz formula (Theorem \ref{thm4}), and Lemma \ref{lemma4} we deduce that
\begin{align}
q\cdot N_{d}^{\prime}&=q^2+q\phi(b)+\frac{q\phi(b)}{q-1}+\frac{q}{q-1}+
\frac{q\phi(b)}{q-1}\sum_{\substack{n=1\\n\neq\frac{q-1}{2}}}^{q-2}(-1)^r
\pi^{(p-1)s}~\overline{\omega}^n\left(\frac{bd^d}{a^d(d-1)^{d-1}}\right)\notag\\
&\times\prod_{i=0}^{r-1}\prod_{h=1}^{d-2}\frac{\Gamma_p(\langle(\frac{h}{d-1}-\frac{m}{q-1})p^i\rangle)}
{\Gamma_p(\langle\frac{hp^i}{d-1}\rangle)}\prod_{\substack{h=1\\h\neq\frac{d}{2}}}^{d-1}
\frac{\Gamma_p(\langle(\frac{h}{d}+\frac{m}{q-1})p^i\rangle)}{\Gamma_p(\langle\frac{hp^i}{d}\rangle)},\notag
\end{align}
where $$s=\sum_{i=0}^{r-1}\left\{\lfloor\frac{-np^i}{q-1}\rfloor-\lfloor\frac{-2np^i}{q-1}\rfloor-
\lfloor\frac{ndp^i}{q-1}\rfloor-\lfloor\frac{-n(d-1)p^i}{q-1}\rfloor\right\},$$ which is an integer.
Now we calculate the term under summation for $n=0,\frac{q-1}{2}$ using Lemma \ref{lemma4} and Lemma \ref{lemma5}, and we deduce that
\begin{align}
N_{d}^{\prime}=&q-1-q\phi(b)\notag\\
&\times{_{d-2}}G_{d-2}\left[\begin{array}{ccccccc}
                         \frac{1}{d-1}, & \frac{2}{d-1}, & \ldots, & \frac{d-2}{2(d-1)}, & \frac{d}{2(d-1)},
                          & \ldots, & \frac{d-2}{(d-1)}\\
                         \frac{1}{d}, & \frac{2}{d}, & \ldots, & \frac{d-2}{2d}, & \frac{d+2}{2d}, & \ldots,
                          & \frac{d-1}{d}
                       \end{array}|g(0)
\right]_q.\notag
\end{align}
This completes the proof of the theorem.
\end{proof}
\begin{theorem}\label{thm-6}
Let $d\geq3$ be odd, and let $p$ be an odd prime such that $p\nmid d(d-1)$. Then
\begin{align}
&N_{d}^{\prime}=q-q\phi(b)\notag\\
&\times{_{d-1}}G_{d-1}\left[\begin{array}{cccccccc}
                          \frac{1}{d-1}, & \frac{2}{d-1}, & \ldots, & \frac{d-1}{2(d-1)}, & \frac{d+1}{2(d-1)},
                           & \ldots, & \frac{d-2}{d-1}, & \frac{1}{2} \\
                          \frac{1}{d}, & \frac{2}{d}, & \ldots, & \frac{d-1}{2d}, & \frac{d+1}{2d}, & \ldots,
                           & \frac{d-2}{d}, & \frac{d-1}{d}
                        \end{array}|-g(0)\right]_q,\notag
\end{align}
where $g$ is defined as in Theorem \ref{thm-1}.
\end{theorem}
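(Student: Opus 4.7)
The plan is to parallel the proof of Theorem \ref{thm-5}, adapted to the odd-$d$ case. I would start from
\[ qN_d' = \sum_{x,y,z\in\mathbb{F}_q}\theta\bigl(z(x^d+ax^{d-1}+b-y^2)\bigr), \]
and split the sum according to the vanishing of $x$, $y$, $z$ to obtain $qN_d' = q^2 + A + B + C + D$. Using \eqref{new-eq-3} together with Lemmas \ref{lemma1}, \ref{lemma2}, and \ref{fusi3}, one evaluates $A=-1$, $B=1+q\phi(b)$, and observes that a term from $D$ cancels $C$, leaving
\[ qN_d' = q^2 + q\phi(b) + \frac{\phi(-1)}{q-1}\sum_{n=0}^{q-2}G_{-(d-1)n+\frac{q-1}{2}}\,G_{nd}\,G_{-n}\,G_{\frac{q-1}{2}}\,T^n\!\left(\frac{b}{a^d}\right). \]

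Because $d$ is odd, the substitution $n\mapsto n-\frac{q-1}{2}$ propagates the $\tfrac{q-1}{2}$-shift to each of $G_{nd}$ and $G_{-n}$ in addition to $G_{-(d-1)n}$, and yields a character factor $\phi(b)\phi(a^d)=\phi(ab)$. I would then apply the Davenport--Hasse relation (Theorem \ref{lemma3}) with $k=2$ and $\psi\in\{T^{-(d-1)n},\,T^{nd},\,T^{-n}\}$ to each shifted Gauss sum, using $G_{\frac{q-1}{2}}^2 = q\phi(-1)$ from Lemma \ref{fusi3}. The $T^{(d-1)n-nd+n}(4) = T^0(4) = 1$ cancellation produces
\[ qN_d' = q^2 + q\phi(b) + \frac{q^2\phi(-ab)}{q-1}\sum_n \frac{G_{-2(d-1)n}\,G_{2nd}\,G_{-2n}}{G_{-(d-1)n}\,G_{nd}\,G_{-n}}\,T^n\!\left(\frac{b}{a^d}\right). \]

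Next, I would invoke the Gross--Koblitz formula (Theorem \ref{thm4}) and Lemma \ref{lemma4} to rewrite the Gauss sums as $p$-adic gamma products, tracking the powers of $-p$ via a floor-function identity in the spirit of Lemma \ref{lemma6}. Crucially, unlike the grouping used in Theorem \ref{thm-4}---which pairs $(G_{-2cn},G_{-cn})$ for each $c\in\{1,d-1,d\}$ and produces odd-numerator half-integer parameters---here the goal is to expose the integer denominators $d-1$ and $d$ directly. Combining the reflection identity of Lemma \ref{lemma8} with Lemma \ref{lemma4} should yield the parameter set $\{h/(d-1):1\le h\le d-2\}\cup\{\tfrac12\}$ upstairs and $\{h/d:1\le h\le d-1\}$ downstairs. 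The special-index contributions at $n=0$ and $n=\tfrac{q-1}{2}$ would be computed directly via Lemma \ref{lemma4} and Lemma \ref{lemma8} to account for the leading $q$ and the $-q\phi(b)$ coefficient of the $G$-function.

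The principal obstacle is the combinatorial identity that plays the role of Lemma \ref{lemma6} for this parameter configuration. While the Gauss-sum arithmetic closely mirrors the earlier proofs, verifying that the resulting sums of $\lfloor\cdot\rfloor$ terms match the exponents of $-p$ dictated by the definition of $_{d-1}G_{d-1}[\cdots]$ with the new parameter set requires a case analysis analogous to---but distinct from---the proof of Lemma \ref{lemma6}, built by writing $\lfloor 2md(d-1)p^i/(q-1)\rfloor = 2d(d-1)u+v$ with $0\le v<2d(d-1)$ and handling each residue $v$ in turn.
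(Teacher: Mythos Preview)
Your displayed ``leaving'' formula is the even-$d$ identity from Theorem~\ref{thm-5}, and it is \emph{false} for odd $d$. The discrepancy arises already when you solve the congruences coming from the inner character sums. From $l(d-1)+md\equiv 0$, $2k\equiv 0$, and $l+m+n+k\equiv 0\pmod{q-1}$ one gets $l\equiv -dn-dk$ and $m\equiv (d-1)n+(d-1)k$. When $d$ is even the half-period shift $k=\tfrac{q-1}{2}$ lands on $m$; when $d$ is odd it lands on $l$ instead (since $d-1$ is even). Hence for odd $d$ the correct residual sum has $G_{nd+\frac{q-1}{2}}$ rather than $G_{-(d-1)n+\frac{q-1}{2}}$, and the factor $T^{l}(a)$ contributes $\phi(a)$, giving an overall $\phi(-a)$ in front, not $\phi(-1)$. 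You can see the two expressions are genuinely different by evaluating the $n=0$ term: yours contributes $q$, the correct one contributes $q\phi(a)$.

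Once the right starting point is in hand, your three-fold Davenport--Hasse plan is unnecessary and in fact steers toward the wrong $G$-function. The paper shifts $n\mapsto n-\tfrac{q-1}{2}$ (which, because $d-1$ is even, removes the shift from $G_{nd+\frac{q-1}{2}}$ and places it on $G_{-n}$ only) and then applies \eqref{eq-34} a \emph{single} time to $G_{-n+\frac{q-1}{2}}$. This produces
\[
q N_d' = q^2+q\phi(b)+\frac{q\phi(b)}{q-1}\sum_{n}\frac{G_{-2n}G_{nd}G_{-n(d-1)}}{G_{-n}}\,T^{n}\!\left(\frac{4b}{a^d}\right),
\]
which is exactly the Gauss-sum structure already handled in the even-$d$ proofs. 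Gross--Koblitz together with Lemma~\ref{lemma4} then yields the parameters $\{h/(d-1):1\le h\le d-2\}\cup\{\tfrac12\}$ upstairs and $\{h/d:1\le h\le d-1\}$ downstairs, and the floor-function bookkeeping is precisely Lemma~\ref{lemma9}, which is already stated and proved---no new $2d(d-1)$-case analysis is required. By contrast, your triple-ratio $\prod_c G_{-2cn}/G_{-cn}$ is the structure that drives Theorem~\ref{thm-4} via Lemma~\ref{lemma6} and leads to the odd-numerator parameters $h/(2(d-1))$, $h/(2d)$, not the ones in the present statement.
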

\begin{proof}
Following the proof of Theorem \ref{thm-5} we obtain
\begin{align}\label{eq-40}
D&=\sum_{x,y,z\in\mathbb{F}_{q}^{\times}}\theta(x^dz)\theta(ax^{d-1}z)\theta(bz)\theta(-zy^2)\notag\\
&=\frac{1}{(q-1)^4}\sum_{l,m,n,k=0}^{q-2}G_{-m}G_{-l}G_{-n}G_{-k}T^l(a)T^n(b)T^k(-1)\notag\\
&\times\sum_{x\in\mathbb{F}_{q}^{\times}}T^{l(d-1)+md}(x)\sum_{y\in\mathbb{F}_{q}^{\times}}T^{2k}(y)
\sum_{z\in\mathbb{F}_{q}^{\times}}T^{l+m+n+k}(z).
\end{align}
The inner sums are non zero only if $l(d-1)+md=0$, $2k=0$, and $l+m+n+k=0$. This implies that $l=-nd$ or $l=-nd+\frac{q-1}{2}$;
$k=0$ or $k= \frac{q-1}{2}$; and $m=n(d-1)$, respectively. Substituting these in \eqref{eq-40} we obtain
\begin{align}
q\cdot N_{d}^{\prime}=q^2+q\phi(b)+\frac{\phi(-a)}{q-1}\sum_{n=0}^{q-2}
G_{-(d-1)n}G_{nd+\frac{q-1}{2}}G_{-n}G_{\frac{q-1}{2}}T^n\left(\frac{b}{a^d}\right).\notag
\end{align}
Replacing $n$ by $n-\frac{q-1}{2}$ we have
\begin{align}
q\cdot N_{d}^{\prime}&=q^2+q\phi(b)+\frac{\phi(-a)}{q-1}\sum_{n=0}^{q-2}
G_{-(d-1)n}G_{nd}G_{-n+\frac{q-1}{2}}G_{\frac{q-1}{2}}T^n\left(\frac{b}{a^d}\right)\phi(ab)\notag\\
&=q^2+q\phi(b)+\frac{\phi(-b)}{q-1}\sum_{n=0}^{q-2}
G_{-(d-1)n}G_{nd}G_{-n+\frac{q-1}{2}}G_{\frac{q-1}{2}}T^n\left(\frac{b}{a^d}\right).
\end{align}
Applying \eqref{eq-34} and Lemma \ref{fusi3} yield
\begin{align}
q\cdot N_{d}^{\prime}&=q^2+q\phi(b)+\frac{q\phi(b)}{q-1}\sum_{n=0}^{q-2}
\frac{G_{-2n}G_{nd}G_{-n(d-1)}}{G_{-n}}T^n\left(\frac{4b}{a^d}\right)\notag\\
&=q^2+q\phi(b)+\frac{q\phi(b)}{q-1}+\frac{q\phi(b)}{q-1}\sum_{n=1}^{q-2}
\frac{G_{-2n}G_{nd}G_{-n(d-1)}}{G_{-n}}T^n\left(\frac{4b}{a^d}\right).
\end{align}
Using Gross-Koblitz formula (Theorem \ref{thm4}), Lemma \ref{lemma4} and Lemma \ref{lemma8} we find that
\begin{align}
q\cdot N_{d}^{\prime}&=q^2+q\phi(b)+\frac{q\phi(b)}{q-1}+\frac{q\phi(b)}{q-1}\sum_{n=1}^{q-2}\pi^{(p-1)s}(-1)^r
\overline{\omega}^n\left(\frac{-bd^d}{a^d(d-1)^{d-1}}\right)\notag\\
&\times\prod_{i=0}^{r-1}\frac{\Gamma_p(\langle(\frac{1}{2}-\frac{n}{q-1})p^i\rangle)}
{\Gamma_p(\langle\frac{p^i}{2}\rangle)}\prod_{h=1}^{d-2}\frac{\Gamma_p(\langle(\frac{h}{d-1}-\frac{n}{q-1})p^i\rangle)}
{\Gamma_p(\langle\frac{hp^i}{d-1}\rangle)}\prod_{h=1}^{d-1}
\frac{\Gamma_p(\langle(\frac{h}{d}+\frac{n}{q-1})p^i\rangle)}{\Gamma_p(\langle\frac{hp^i}{d}\rangle)}\notag
\end{align}
where
\begin{align}
s&=\sum_{i=0}^{r-1}\left\{\langle\frac{-2np^i}{q-1}\rangle+\langle\frac{ndp^i}{q-1}\rangle
+\langle\frac{-n(d-1)p^i}{q-1}\rangle-\langle\frac{-np^i}{q-1}\rangle\right\}\notag\\
&=\sum_{i=0}^{r-1}\left\{\lfloor\frac{-np^i}{q-1}\rfloor-\lfloor\frac{-2np^i}{q-1}\rfloor-
\lfloor\frac{ndp^i}{q-1}\rfloor-\lfloor\frac{-n(d-1)p^i}{q-1}\rfloor\right\},\notag
\end{align}
which is an integer. Now we calculate the term under summation for $n=0$ using Lemma \ref{lemma4} and Lemma \ref{lemma9}, we deduce that
\begin{align}
&q\cdot N_{d}^{\prime}=q^2+q\phi(b)+\frac{q\phi(b)}{q-1}-\frac{q^2\phi(b)}{q-1}-q^2\phi(b)\notag\\
&\times{_{d-1}}G_{d-1}\left[\begin{array}{cccccccc}
                          \frac{1}{d-1}, & \frac{2}{d-1}, & \ldots, & \frac{d-1}{2(d-1)}, & \frac{d+1}{2(d-1)},
                           & \ldots, & \frac{d-2}{d-1}, & \frac{1}{2} \\
                          \frac{1}{d}, & \frac{2}{d}, & \ldots, & \frac{d-1}{2d}, & \frac{d+1}{2d}, & \ldots,
                           & \frac{d-2}{d}, & \frac{d-1}{d}
                        \end{array}|-g(0)\right]_q.\notag
\end{align}
Canceling $q$ from both sides we obtain the required result. This completes the proof.
\end{proof}
\begin{remark}
Putting $d=3$ in Theorem \ref{thm-6} we can derive \cite[Theorem 3.4]{BS1}.
\end{remark}
\noindent \textbf{Proof of Theorem \ref{thm-1}:} Consider the hyperelliptic curves $E_d: y^2=x^d+ax+b$ and
$E_{d}^{\prime}: y^2=x^d+ax^{d-1}+b$, where $a, b\neq 0$. We have
\begin{align} \label{mt-eq-1}
N_d=\#\{(x, y)\in \mathbb{F}_q^2: x^d+ax+b-y^2=0\}=\sum_{y\in\mathbb{F}_q}N(x^d+ax+b-y^2=0),
\end{align}
where, for a given $y$, $N(x^d+ax+b-y^2=0)$ denotes the number of distinct zeros of the polynomial $x^d+ax+b-y^2$.
If $b$ is not a square in $\mathbb{F}_q$ then the term $b-y^2\neq 0$ for all $y\in\mathbb{F}_q$.
Applying Theorem \ref{thm-7} we have
\begin{align}\label{mt-eq-2}
&N(x^d+ax+b-y^2=0)=1+\phi(y^2-b)\notag\\
&\times{_{d-1}}G_{d-1}\left[\begin{array}{ccccccc}
                              \frac{1}{2(d-1)}, & \frac{3}{2(d-1)}, & \ldots, & \frac{d-1}{2(d-1)}, & \frac{d+1}{2(d-1)},
                               & \ldots, & \frac{2d-3}{2(d-1)} \\
                              0, & \frac{1}{d}, & \ldots, & \frac{\frac{d}{2}-1}{d}, & \frac{\frac{d}{2}+1}{d}, & \ldots, & \frac{d-1}{d}
                            \end{array}|f(y)
\right]_q,\notag
\end{align}
where $f(y)=\frac{d}{a}\left(\frac{(b-y^2)d}{a(d-1)}\right)^{d-1}$.
Now putting the value of $N(x^d+ax+b-y^2=0)$ in \eqref{mt-eq-1}, and then applying Theorem \ref{thm-3} we easily derive the first summation identity.
To derive the second summation identity we consider the hyperelliptic curve $E_d'$ and the proof is similar to that of the first summation identity.
If $b$ is not a square in $\mathbb{F}_q$, using Theorem \ref{thm-9} and Theorem \ref{thm-5} we derive the second summation identity.
\par If $b$ is a square in $\mathbb{F}_q$, then for $y=\pm\sqrt{b}$ the term $b-y^2=0$. Hence
\begin{align}
N_d&=\#\{(x, y)\in \mathbb{F}_q^2: x^d+ax+b-y^2=0\}\notag\\
&=\sum_{\substack{y\in\mathbb{F}_q\\y\neq\pm\sqrt{b}}}N(x^d+ax+b-y^2=0)
+2\cdot N(x(x^{d-1}+a)=0).
\end{align}
Using Lemma \ref{lemma7} we have
\begin{align}
N(x(x^{d-1}+a)=0)=1+\sum_{j=0}^{l-1}\chi^j(-a),\notag
\end{align}
where $l=\gcd(d-1,q-1)$ and $\chi$ is a character of order $l$. Thus,
\begin{align}\label{mt-eq-3}
N_d&=2+2\cdot \sum_{j=0}^{l-1}\chi^j(-a) + \sum_{\substack{y\in\mathbb{F}_q\\y\neq\pm\sqrt{b}}}N(x^d+ax+b-y^2=0).
\end{align}
Now applying Theorem \ref{thm-7} and Theorem \ref{thm-3} in \eqref{mt-eq-3}, we deduce the third summation identity.
Again, if $b$ is a square then
\begin{align}\label{mt-eq-4}
N_d'&=\#\{(x, y)\in \mathbb{F}_q^2: x^d+ax^{d-1}+b-y^2=0\}\notag\\
&=\sum_{\substack{y\in\mathbb{F}_q\\y\neq\pm\sqrt{b}}}N(x^d+ax^{d-1}+b-y^2=0)
+2\cdot N(x^{d-1}(x+a)=0)\notag\\
&=\sum_{\substack{y\in\mathbb{F}_q\\y\neq\pm\sqrt{b}}}N(x^d+ax^{d-1}+b-y^2=0)
+4.
\end{align}
Now applying Theorem \ref{thm-9} and Theorem \ref{thm-5} in \eqref{mt-eq-4} we derive the fourth summation identity of the theorem.
This completes the proof of the theorem. \\\\
\noindent \textbf{Proof of Theorem \ref{thm-2}:} Here $d$ is odd. Following the proof of Theorem \ref{thm-1} and
applying Theorem \ref{thm-4}, Theorem \ref{thm-6}, Theorem \ref{thm-8}, and
Theorem \ref{thm-10}, we can derive all the four summation identities. This completes the proof of the theorem.\\\\
\noindent \textbf{Proof of Theorem \ref{thm-11}:} In \cite[Theorem 1.2]{mccarthy2}, the third author gave a formula for the number of points on the
elliptic curve $y^2=x^3+ax+b$ as a special value of $_{2}G_{2}[\cdots]_p$ when the $j$-invariant of the curve is different from $0$ and $1728$.
We have verified that the result is also true for $\mathbb{F}_q$. Thus, from \cite[Theorem 1.2]{mccarthy2} we have
\begin{align}
\#\{(x, y)\in \mathbb{F}_q^2: y^2=x^3+ax+b\}=q-\phi(b)\cdot q\cdot{_{2}}G_2\left[\begin{array}{cc}
                 \frac{1}{4}, & \frac{3}{4} \\
                 \frac{1}{3}, & \frac{2}{3}
               \end{array}|\frac{-27b^2}{4a^3}
\right]_q,\notag
\end{align}
where $a, b\neq 0$ and $\dfrac{-27b^2}{4a^3}\neq1$. Now taking $d=3$ in Theorem \ref{thm-4}, we have
\begin{align}
\#\{(x, y)\in \mathbb{F}_q^2: y^2=x^3+ax+b\}=q-q\cdot \phi(-ab)\cdot {_{2}}G_2\left[\begin{array}{cc}
                 \frac{1}{4}, & \frac{3}{4} \\
                 \frac{1}{6}, & \frac{5}{6}
               \end{array}|\frac{-27b^2}{4a^3}
\right]_q,\notag
\end{align}
where $a, b\neq 0$. Comparing both the identities, we complete the proof of the theorem.
\section{special values of $_{n}G_{n}[\cdots]$ for $n=2, 3, 4$}
Finding special values of hypergeometric function is an important and interesting problem. Many special values of
hypergeometric functions over finite fields are obtained (see for example \cite{BK1, BK3, evans2, ono}). These results can be re-written in terms of
$_{2}G_{2}[\cdots]$ and $_{3}G_{3}[\cdots]$. However, no special value of ${_{n}}G_{n}[\cdots]$ is obtained in full generality to date. In \cite{BS2}, the first
and second author expressed the number of distinct zeros of the polynomials $x^d+ax+b$ and $x^d+ax^{d-1}+b$ over $\mathbb{F}_q$ in terms of values of
the function $_{d-1}G_{d-1}[\cdots]$. We now look at those expressions more closely and derive certain special values of the function $_{n}G_{n}[\cdots]$
when $n=2, 3, 4$.
\begin{theorem}\label{sv1}
Let $a,b,c\in\mathbb{F}_q^{\times}$ be such that $a+b+c=0$ and $ab+bc+ca\neq 0$. Then, for $p\geq 5$, we have
\begin{align}\label{sv1-eq1}
 _{2}G_{2}\left[\begin{array}{cc}
                         0, & \frac{1}{2} \\
                         \frac{1}{6}, & \frac{5}{6},
                       \end{array}|-\frac{27a^2b^2c^2}{4(ab+bc+ca)^3}\right]_q=
             A\cdot \phi(-(ab+bc+ca)),
\end{align}
where $A=2$ if all of $a, b, c$ are distinct and $A=1$ if exactly two of $a, b, c$ are equal.
\par If $a,b,c\in\mathbb{F}_q^{\times}$ are such that $ab+bc+ca=0$ and $a+b+c\neq 0$. Then, for $p\geq 5$, we have
\begin{align}\label{sv1-eq2}
 _{2}G_{2}\left[\begin{array}{cc}
                         0, & \frac{1}{2} \\
                         \frac{1}{6}, & \frac{5}{6},
                       \end{array}|-\frac{27abc}{4(a+b+c)^3}\right]_q=A\cdot \phi(-abc(a+b+c)).
\end{align}
\end{theorem}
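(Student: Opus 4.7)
The strategy is to apply the zero-counting formulas of Theorems \ref{thm-8} and \ref{thm-10} to the cubic $P(x)=(x-a)(x-b)(x-c)$, count the distinct roots of $P$ in $\mathbb{F}_q$ directly, and then solve for the value of ${_{2}}G_{2}[\cdots]_q$. Under the hypothesis $p\geq 5$, the triple coincidence $a=b=c$ is ruled out in both parts: it would force $3a=0$ via $a+b+c=0$ in the first part, and $3a^{2}=0$ via $ab+bc+ca=0$ in the second, both contradicting $a\in\mathbb{F}_q^{\times}$. Hence only the cases \emph{all distinct} and \emph{exactly two coincide} arise, which correspond respectively to $N(P=0)=3$ and $N(P=0)=2$, and this is the source of the two values $A=2$ and $A=1$.

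For \eqref{sv1-eq1}, the relation $a+b+c=0$ together with Vieta reduces $P$ to $x^3+(ab+bc+ca)\,x-abc$, a trinomial of the shape $x^{d}+ux+v$ with $d=3$, $u=ab+bc+ca$, and $v=-abc$, both nonzero by hypothesis. Applying Theorem \ref{thm-8} with this identification, the parameter $\alpha=\frac{d}{u}\left(\frac{vd}{u(d-1)}\right)^{d-1}$ specializes to $\frac{27\,a^{2}b^{2}c^{2}}{4(ab+bc+ca)^{3}}$ and the sign $\phi(-u)$ becomes $\phi(-(ab+bc+ca))$. Equating $N(P=0)\in\{3,2\}$ to $1+\phi(-(ab+bc+ca))\cdot{_{2}}G_{2}[\cdots]_q$ and using $\phi^{2}=\varepsilon$ on the nonzero element $-(ab+bc+ca)$ immediately yields \eqref{sv1-eq1} with the stated $A\in\{2,1\}$.

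For \eqref{sv1-eq2}, the relation $ab+bc+ca=0$ reduces $P$ to $x^{3}-(a+b+c)\,x^{2}-abc$, now of the shape $x^{d}+ux^{d-1}+v$ with $u=-(a+b+c)$ and $v=-abc$, both nonzero. Theorem \ref{thm-10} applies with $\beta=\frac{vd}{u}\left(\frac{d}{u(d-1)}\right)^{d-1}=\frac{27\,abc}{4(a+b+c)^{3}}$ and $\phi(-uv)=\phi(-abc(a+b+c))$, and the same distinct-versus-double root accounting delivers \eqref{sv1-eq2}. The main obstacle in both parts is not any hard identity manipulation but rather careful bookkeeping: one must check that the side conditions of Theorems \ref{thm-8} and \ref{thm-10} (namely $p\nmid d(d-1)=6$ and that the two relevant coefficients lie in $\mathbb{F}_q^{\times}$) are guaranteed by the standing hypotheses, and then rigorously exclude the triple-coincidence configuration so that the case split into $A=2$ and $A=1$ is complete.
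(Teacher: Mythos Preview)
Your proposal is correct and follows essentially the same approach as the paper: expand $(x-a)(x-b)(x-c)$ via Vieta, specialize to the trinomials $x^{3}+(ab+bc+ca)x-abc$ or $x^{3}-(a+b+c)x^{2}-abc$ according to which symmetric function vanishes, and then invoke Theorem~\ref{thm-8} or Theorem~\ref{thm-10} with $d=3$. You have in fact supplied more detail than the paper, which leaves the computation of $\alpha$, $\beta$, the exclusion of the case $a=b=c$, and the extraction of $A\in\{1,2\}$ entirely to the reader.
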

\begin{proof}
 We have $(x-a)(x-b)(x-c)=x^3-(a+b+c)x^2+(ab+bc+ca)x-abc$. Now if $a+b+c=0$ and $ab+bc+ca\neq 0$, then putting $d=3$ in Theorem \ref{thm-8},
 we can easily deduce \eqref{sv1-eq1}. Similarly,
 \eqref{sv1-eq2} follows from Theorem \ref{thm-10}.
\end{proof}
Considering different values of $a, b, c$ we can derive infinitely many special values of the above functions. For example, we derive three of them below.
\begin{example}
 Put $a=b=1$ and $c=-2$ in \eqref{sv1-eq1}, then for all $p\geq 5$, we have
 \begin{align}
 _{2}G_{2}\left[\begin{array}{cc}
                         0, & \frac{1}{2} \\
                         \frac{1}{6}, & \frac{5}{6},
                       \end{array}|1\right]_q=\phi(3).\notag
\end{align}\end{example}
\begin{example}
 Put $a=1, b=2$ and $c=-3$ in \eqref{sv1-eq1}, then for all $p>7$, we have
 \begin{align}
 _{2}G_{2}\left[\begin{array}{cc}
                         0, & \frac{1}{2} \\
                         \frac{1}{6}, & \frac{5}{6},
                       \end{array}|\frac{243}{343}\right]_q=2\cdot \phi(7).\notag
\end{align}\end{example}
\begin{example}
 Put $a=12, b=4$ and $c=-3$ in \eqref{sv1-eq2}, then for all $p>13$, we have
 \begin{align}
 _{2}G_{2}\left[\begin{array}{cc}
                         0, & \frac{1}{2} \\
                         \frac{1}{6}, & \frac{5}{6},
                       \end{array}|\frac{972}{2197}\right]_q=2\cdot \phi(13).\notag
\end{align}\end{example}
\begin{theorem}
If $p\geq 5$ then we have
\begin{align}
{_{3}}G_{3}\left[\begin{array}{ccc}
             \frac{1}{6}, & \frac{1}{2}, & \frac{5}{6} \\
             0, & \frac{1}{4}, & \frac{3}{4}
           \end{array}|1\right]_q=\phi(-3)+\phi(6).
\end{align}\notag
\end{theorem}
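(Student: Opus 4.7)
The plan is to specialise Theorem \ref{thm-7} to $d=4$ applied to an explicitly factorable polynomial whose associated $\alpha$ parameter equals $1$. First observe that with $d=4$ the hypergeometric parameters appearing in Theorem \ref{thm-7} are precisely $\tfrac{1}{6},\tfrac{3}{6},\tfrac{5}{6}$ on the top row and $0,\tfrac{1}{4},\tfrac{3}{4}$ on the bottom, which matches the statement (after noting $\tfrac{3}{6}=\tfrac{1}{2}$). Since $\alpha=\tfrac{4}{a}\!\left(\tfrac{4b}{3a}\right)^{3}=\tfrac{256\,b^{3}}{27\,a^{4}}$, the choice $a=4$, $b=3$ (valid for every $p\geq 5$, so that $12$ is invertible and $p\nmid d(d-1)=12$) gives $\alpha=1$.

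Next I factor the polynomial $x^{4}+4x+3$. A quick check shows $x=-1$ is a double root, and in fact $x^{4}+4x+3=(x+1)^{2}(x^{2}-2x+3)$, as can be verified by direct expansion. This factorisation remains valid over $\mathbb{F}_q$ for $p\geq 5$. The discriminant of $x^{2}-2x+3$ is $-8$, so this quadratic factor splits over $\mathbb{F}_q$ if and only if $\phi(-2)=1$. Counting distinct roots therefore gives
\begin{equation*}
N(x^{4}+4x+3=0)\;=\;2+\phi(-2).
\end{equation*}

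On the other hand, Theorem \ref{thm-7} applied with $a=4$, $b=3$, $d=4$ yields
\begin{equation*}
N(x^{4}+4x+3=0)\;=\;1+\phi(-3)\cdot{_{3}}G_{3}\!\left[\begin{array}{ccc}\tfrac{1}{6}, & \tfrac{1}{2}, & \tfrac{5}{6}\\ 0, & \tfrac{1}{4}, & \tfrac{3}{4}\end{array}\Big|\,1\right]_{q}.
\end{equation*}
Equating the two expressions and multiplying through by $\phi(-3)$ (using $\phi(-3)^{2}=1$ and $\phi(-3)\phi(-2)=\phi(6)$) gives
\begin{equation*}
{_{3}}G_{3}\!\left[\begin{array}{ccc}\tfrac{1}{6}, & \tfrac{1}{2}, & \tfrac{5}{6}\\ 0, & \tfrac{1}{4}, & \tfrac{3}{4}\end{array}\Big|\,1\right]_{q}\;=\;\phi(-3)\bigl(1+\phi(-2)\bigr)\;=\;\phi(-3)+\phi(6),
\end{equation*}
as claimed. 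There is no serious obstacle: the substantive work has already been packaged into Theorem \ref{thm-7}. The only craft needed is spotting the pair $(a,b)=(4,3)$ which simultaneously forces $\alpha=1$ and produces a polynomial that splits transparently over $\mathbb{F}_q$ up to a single quadratic factor whose splitting is governed by $\phi(-2)$.
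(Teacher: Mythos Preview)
Your proof is correct and follows essentially the same approach as the paper: apply Theorem~\ref{thm-7} with $d=4$ to a quartic of the form $x^4+ax+b$ that factors as a perfect square times a quadratic, with parameters chosen so that $\alpha=1$. In fact your choice $(a,b)=(4,3)$ is the instance $a=-2$ of the paper's one-parameter family $x^4-\tfrac{a^3}{2}x+\tfrac{3a^4}{16}=(x-\tfrac{a}{2})^2(x^2+ax+\tfrac{3a^2}{4})$, and the remaining quadratic has discriminant $-2a^2$ in both cases, yielding the same count $2+\phi(-2)$.
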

\begin{proof}
We have $$x^4-\frac{a^3}{2}x+\frac{3a^4}{16}=(x-\frac{a}{2})^2(x^2+ax+\frac{3a^2}{4}).$$
Hence
\begin{align}\label{eq-60}
N(x^4-\frac{a^3}{2}x+\frac{3a^4}{16}=0)&=1+N(x^2+ax+\frac{3a^2}{4}=0)\notag\\
&=1+1+\phi(-2).
\end{align}
Now applying Theorem \ref{thm-7} on the left side of \eqref{eq-60} we obtain the result.
\end{proof}
\begin{theorem}\label{trans5}
 If $p>7$ and $p\neq 23$,
 then \begin{align}\label{trans-eq4}
 & _{4}G_{4}\left[\begin{array}{cccc}
                        0, & \frac{1}{4}, & \frac{1}{2}, & \frac{3}{4} \\
                         \frac{1}{10}, & \frac{3}{10}, & \frac{7}{10}, & \frac{9}{10}
                       \end{array}|-\frac{5^5}{4^4}\right]_q
                       =\phi(-1)+\phi(3)+\phi(-1)\cdot {_{2}G_{2}}\left[\begin{array}{cc}
                         0, & \frac{1}{2} \\
                         \frac{1}{6}, & \frac{5}{6},
                       \end{array}|\frac{27}{4}\right]_q.
 \end{align}
\end{theorem}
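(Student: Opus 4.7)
The plan is to mimic the factorization-based approach used in the preceding two theorems, applied to the polynomial $x^5+x+1$, whose number of distinct $\mathbb{F}_q$-zeros can be computed in two different ways.

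First, I would apply Theorem \ref{thm-8} with $d=5$ and $a=b=1$. A quick computation gives $\alpha = \tfrac{d}{a}\bigl(\tfrac{bd}{a(d-1)}\bigr)^{d-1} = \tfrac{5^5}{4^4}$ and $\phi(-a)=\phi(-1)$, yielding
\[
N(x^5+x+1=0) = 1+\phi(-1)\cdot{_{4}G_{4}}\!\left[\begin{array}{cccc}0,&\tfrac{1}{4},&\tfrac{1}{2},&\tfrac{3}{4}\\ \tfrac{1}{10},&\tfrac{3}{10},&\tfrac{7}{10},&\tfrac{9}{10}\end{array}\Big|-\tfrac{5^5}{4^4}\right]_q.
\]
On the other hand, the integer factorization
\[
x^5+x+1 = (x^2+x+1)(x^3-x^2+1)
\]
reduces the counting problem to the two smaller factors, provided they share no common root over $\mathbb{F}_q$. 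The key check here is that any common root $\alpha$ satisfies $\alpha^3 = 1$ from the first factor, and hence $\alpha^3-\alpha^2+1 = 2-\alpha^2 = 3+\alpha$, which vanishes only at $\alpha=-3$; substituting back forces $7\equiv 0\pmod p$, excluded by $p>7$. Therefore
\[
N(x^5+x+1=0) = N(x^2+x+1=0) + N(x^3-x^2+1=0).
\]

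Next I would evaluate the two summands. The quadratic factor contributes $N(x^2+x+1=0) = 1+\phi(-3)$ from the discriminant formula. For the cubic factor I would apply Theorem \ref{thm-10} with $d=3$, $a=-1$, $b=1$: then $\phi(-ab)=\phi(1)=1$ and $\beta = \tfrac{bd}{a}\bigl(\tfrac{d}{a(d-1)}\bigr)^{d-1} = -\tfrac{27}{4}$, so
\[
N(x^3-x^2+1=0) = 1+{_{2}G_{2}}\!\left[\begin{array}{cc}0,&\tfrac{1}{2}\\ \tfrac{1}{6},&\tfrac{5}{6}\end{array}\Big|\tfrac{27}{4}\right]_q.
\]
The assumption $p\neq 23$ enters here to guarantee that $x^3-x^2+1$ remains separable over $\mathbb{F}_q$ (its discriminant equals $-23$), which is also what ensures the entire quintic $x^5+x+1$ has distinct roots when combined with the previous restrictions.

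Finally, I would equate the two expressions for $N(x^5+x+1=0)$, subtract $1$, multiply through by $\phi(-1)$, and use $\phi(-1)^2=1$ together with $\phi(-1)\phi(-3)=\phi(3)$ to obtain exactly the identity \eqref{trans-eq4}. The only non-mechanical step in the argument is the common-root analysis of the two factors, which pins down the restriction $p\neq 7$; everything else is a direct assembly of Theorem \ref{thm-8}, Theorem \ref{thm-10}, and the quadratic formula, so I do not anticipate any serious technical obstacle.
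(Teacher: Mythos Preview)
Your proof is correct and follows essentially the same strategy as the paper's: factor a quintic of the required shape into a quadratic times a cubic, then apply Theorems~\ref{thm-8} and~\ref{thm-10} to equate the two $G$-function expressions. The only difference is that the paper works with the reciprocal polynomial $x^5+ax^4+a^5=(x^3-a^2x+a^3)(x^2+ax+a^2)$, applying Theorem~\ref{thm-10} to the quintic and Theorem~\ref{thm-8} to the cubic, whereas you use $x^5+x+1=(x^2+x+1)(x^3-x^2+1)$ with the roles of the two theorems swapped; the resulting computations and the restrictions on $p$ coincide.
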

\begin{proof}
 We have
 \begin{align}\label{trans-eq5}
 x^5+ax^4+a^5=(x^3-a^2x+a^3)(x^2+ax+a^2).
 \end{align}
Let $f(x)=x^5+ax^4+a^5$. Then $f'(x)=5x^4+4ax^3$. If $f(x)$ has a repeated zero, say $c$, then $f(c)=0$ and $f'(c)=0$. Solving these two equations, we have
$3381=3.7^2.23=0$ in $\mathbb{F}_q$. Hence, if $p\neq 3, 7, 23$, we have
\begin{align}\label{trans-eq6}
N(x^5+ax^4+a^5=0)=N(x^3-a^2x+a^3=0)+N(x^2+ax+a^2=0)\notag.
 \end{align}
Now the result easily follows from Theorem \ref{thm-10} and Theorem \ref{thm-8}.
\end{proof}
\section{concluding remarks and an open problem}
The technique used to derive the summation identities for ${_{n}}G_n[\cdots]_q$ is based on counting points on certain
families of hyperelliptic curves and counting zeros on certain families of polynomials in terms of ${_{n}}G_n[\cdots]_q$. We also applied this method successfully
to derive many transformations for ${_{2}}G_2[\cdots]_q$ by counting points on various families of elliptic curves (see for example \cite{BS1, BS3}). We now pose
an open problem below.
\begin{problem}
Derive the summation identities of Theorem \ref{thm-1} and Theorem \ref{thm-2} more directly using properties of $p$-adic gamma function.
\end{problem}
While finding the special values of the function ${_{n}}G_n[\cdots]$ when $n=2,3,4$, we factored the polynomials $x^d+ax+b$ and $x^d+ax^{d-1}+b$ 
into polynomials of the same form of lower degree when $d=5, 4, 3$. However, such factorizations do not exist when $d > 5$. Hence, our method can't be applied to 
deduce special values of ${_{n}}G_n[\cdots]$ when $n\geq 5$.
\bibliographystyle{amsplain}

\end{document}